\renewcommand{\L}{\mathrm{L}}
\renewcommand{\H}{\mathrm{H}}
\title{Picard groups of quotient ring spectra}
\author{Ishan Levy}
\address{Department of Mathematics, Institute of Advanced Studies, USA}
\author{Guchuan Li}
\address{School  of Mathematical Sciences, Peking University, Beijing, China}
\author{Ningchuan Zhang}
\address{Department of Mathematics, Indiana University Bloomington, USA}
\date{}
\begin{document}
	\begin{abstract}
		We develop tools to study Picard groups of quotients of ring spectra by a finitely generated ideal, which we use to show that $\Pic(\E_n/I) = \Z/2$, where $\E_n$ is a Lubin--Tate theory and $I$ is an ideal generated by suitable powers of a regular sequence. We apply this to obtain spectral sequences computing Picard groups of $\mathrm{K}(n)$-local generalized Moore algebras, and make some preliminary computations including the height $1$ case.
	\end{abstract}
	\maketitle	

	\section{Introduction}
	When studying the Picard group of a commutative ring $R$, i.e. the group of invertible modules, a common technique is to break up the problem, by choosing an element $v$, and studying the problem after inverting $v$ and after $v$-completion, which is a form of Beauville--Laszlo gluing. Understanding line bundles after $v$-completion is aided by the fact that the group of invertible objects is nil-invariant, so that the problem can often be reduced to a question about the smaller quotient ring $R/v$.
	
	In higher algebra, these techniques do not carry over in as straightforward a manner. Given a commutative ring spectrum $R$, we may try to understanding its space of invertible modules $\cPic(R)$ in the same way: given an element $v \in \pi_*R$, we may form the localization and completions of the category of $R$-modules with respect to $v$, which formally satisfy a form of Beauville--Laszlo gluing. However, it is less clear that these can be related to questions about the quotient $R/v$.
	
	The first step towards this was in \cite{LZ_profin_Picard}, whose techniques show that $$\cPic(\Mod(R)^{\wedge}_v) \simto \lim_n\cPic(\Mod(R/v^n)),$$ where $R/v^n$ has an $\EE_2$-ring structure for $n$ suitably large as in \cite[Theorem 1.5]{Burkland_mult_Moore}. The main goal of this paper is to provide tools to compute Picard groups of rings such as $R/v^n$ as $n$ varies.
	
	A sample result is the following, which we use to give an upper bound on $\Pic(R/v^n)$ in terms of smaller values of $n$:
	
	\begin{thm}[{\ref{cor:pic_inj}}]
		Let $R$ be an $\Ebb_3$-ring. Suppose $v\in \pi_*(R)$ is an element such that $R/v$ admits a left unital multiplication in $R$-modules. If $n\geq 3$ and $\pi_{-1-n|v|}(R/v)=0$, then the base change map $\Pic(R/v^{n+1})\to \Pic(R/v^n)$ is injective where $R/v^{n+1}\to R/v^n$ is made into an $\Ebb_2$-$R$-algebra map using \cite[Theorem 1.5]{Burkland_mult_Moore}.
	\end{thm}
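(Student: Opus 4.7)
The plan is to realize the quotient map $R/v^{n+1}\to R/v^n$ as an $\Ebb_2$-square-zero extension with ideal $\Sigma^{n|v|}R/v$ and apply obstruction theory for Picard spaces to reduce the injectivity to the vanishing of $\pi_{-1-n|v|}(R/v)$. For $n\geq 3$, Burklund's theorem produces the $\Ebb_2$-$R$-algebra map structure on $R/v^{n+1}\to R/v^n$; the cofiber sequence of $R/v^{n+1}$-modules
\[
\Sigma^{n|v|}R/v\xrightarrow{v^n}R/v^{n+1}\to R/v^n
\]
then exhibits it as a square-zero extension, and the left-unital $R$-module multiplication on $R/v$ makes $R/v$ canonically an $R/v^n$-module, which is needed to regard the ideal as a module over the base.

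Given $L\in\Pic(R/v^{n+1})$ with a trivialization $L\otimes_{R/v^{n+1}}R/v^n\simeq R/v^n$, tensoring the cofiber sequence above with $L$ over $R/v^{n+1}$ and using
\[
L\otimes_{R/v^{n+1}}R/v\simeq \bigl(L\otimes_{R/v^{n+1}}R/v^n\bigr)\otimes_{R/v^n}R/v\simeq R/v
\]
yields a cofiber sequence $\Sigma^{n|v|}R/v\to L\to R/v^n$ in $R/v^{n+1}$-modules. Showing $L\simeq R/v^{n+1}$ reduces to showing $L$ defines the same extension as $R/v^{n+1}$, modulo the indeterminacy coming from the boundary of $\pi_1\cPic(R/v^n)=(\pi_0R/v^n)^{\times}$. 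By the obstruction-theoretic description of Picard spaces under $\Ebb_2$-square-zero extensions, the kernel of $\Pic(R/v^{n+1})\to\Pic(R/v^n)$ is controlled by $\pi_{-1}$ of the underlying $R/v^n$-module of the ideal, namely $\pi_{-1-n|v|}(R/v)$; the vanishing hypothesis then gives the desired injectivity.

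The main obstacle is isolating the obstruction group as exactly $\pi_{-1-n|v|}(R/v)$. A naive calculation of $\mathrm{Ext}^1_{R/v^{n+1}}(R/v^n,\Sigma^{n|v|}R/v)$ from the two-term resolution $R/v^{n+1}[-n|v|]\xrightarrow{v^n}R/v^{n+1}\to R/v^n$ produces a further summand $\pi_{-2n|v|}(R/v)$, coming from the fact that $v^n$ acts nullhomotopically on $R/v$; this piece is not killed by the hypothesis. Removing this extraneous contribution uses the $\Ebb_2$-structure of the square-zero extension in an essential way, via a Hochschild-style reduction to the underlying module of the ideal, and this is presumably the central technical output of the Picard-theoretic infrastructure developed earlier in the paper.
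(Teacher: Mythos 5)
Your sketch lands on the right answer and correctly pinpoints the central difficulty, but it does not close the argument: the step you defer --- removing the extraneous contribution (your $\pi_{-2n|v|}(R/v)$ term) from $\mathrm{Ext}^1_{R/v^{n+1}}(R/v^n,\Sigma^{n|v|}R/v)$ --- is precisely the content of the theorem, and ``a Hochschild-style reduction \dots\ presumably the central technical output of the infrastructure'' is an appeal to the result you are supposed to be proving. Classifying $L$ merely as an extension of $R/v^n$ by $\Sigma^{n|v|}R/v$ in $R/v^{n+1}$-modules genuinely does see the larger Ext group, so this route cannot work as stated: some structure on $L$ beyond the cofiber sequence must be brought to bear. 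There is also an unaddressed prerequisite at the start: a map whose ideal squares to zero on homotopy groups need not be a square-zero extension of structured ($\EE_2$-)rings, and establishing that Burklund's $R/v^{n+1}\to R/v^n$ is one (i.e.\ fits in a pullback square against a trivial square-zero extension) is itself a nontrivial input that your cofiber sequence alone does not supply.

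For comparison, the paper closes exactly this gap by a different mechanism. It presents $R/v^{n+1}$ as the pullback $R/v^n\times_{R/v^n\otimes_R R/v}R/v$ via the descent tower (\Cref{definition:E1descenttower}, identified with Burklund's quotient in \Cref{lemma:vcompatibledescent}), and the oriented-pullback/Milnor-context analysis (\Cref{lemma:pullbackupperboundcat}, \Cref{prop:pullbackupperbound}, specialized in \Cref{thm:mod_quot}) shows that the relative module space $\Mod(R/v^{n+1}\mid R/v^n)$ is a union of components of the fiber of $\Omega^{\infty}(R/v^n\otimes_R R/v)\to\Omega^{\infty}(R/v)$, i.e.\ of $\Omega^{\infty}\Sigma^{1+n|v|}(R/v)$, whose $\pi_0$ is exactly $\pi_{-1-n|v|}(R/v)$. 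The point is that a point of the relative module space carries not just the cofiber sequence you write down but a descent datum over $R/v$ coming from the pullback square, and it is this extra datum that eliminates the spurious Ext contribution; conservativity of base change (\Cref{lemma:descenttowerdescendable}) together with \Cref{lemma:relativepicardgroups} then converts connectivity of the relative module space into injectivity on $\Pic$. If you want to salvage your square-zero framing, you would need to prove the pullback presentation and run the Mayer--Vietoris sequence on Picard/module spaces --- at which point you have reproduced the paper's argument rather than found an alternative to it.
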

	
	Because $R/v^n$ is often not an $\EE_2$-ring, we work more generally with the notion of the \textit{relative module space} (\Cref{definition:relativemodulespace}) of a map $f\colon  R \to S$ of $\EE_1$-rings, which generalizes the fiber of the map of Picard spaces in the case that $R,S$ are $\EE_2$-rings and $f$ is conservative on module categories. We prove versions of the above theorem in this more general setting in \Cref{prop:pullbackupperbound} and \Cref{thm:mod_quot}.
	
	For $\EE_\infty$-rings $R$ with complete regular local even periodic homotopy rings, $\Pic(R)$ was understood in \cite{Baker-Richter_invertible_modules,MS_Picard}. Our results allow us to compute the Picard group of such kinds of rings:
	
	\begin{thm}[{\ref{iteratedquotient}}]
		Let $R$ be an $\EE_2$-ring such that $\pi_*R$ is even and a complete regular local graded ring with maximal ideal $\mfrak$ generated by the regular sequence $x_1,x_2,\cdots,x_m$. For any $\EE_2$-refinement of the $\EE_1$-$R$-algebra structure on $R/(x_1^{a_1},\cdots,x_m^{a_m})$ from \Cref{construction:quotientregularsequence}, $\Pic(R/(x_1^{a_1},\cdots,x_m^{a_m}))$ is generated by $\Sigma R$.
	\end{thm}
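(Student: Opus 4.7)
The plan is to induct on the multi-index $(a_1, \ldots, a_m)$, chaining the Picard injectivity results of the paper to reduce to the case $R/\mfrak$, where the answer is known by Baker--Richter and Mathew--Stojanoska (\cite{Baker-Richter_invertible_modules,MS_Picard}).

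At each inductive step I reduce a single power $a_i$ by one. The base change map yields a comparison
$$\Pic(R/(x_1^{a_1},\ldots,x_i^{a_i},\ldots,x_m^{a_m})) \longrightarrow \Pic(R/(x_1^{a_1},\ldots,x_i^{a_i-1},\ldots,x_m^{a_m})),$$
which I show is injective using \Cref{thm:mod_quot}. Since $\pi_*R$ is even and each $x_i$ has even degree, every intermediate quotient has homotopy concentrated in even degrees, so the vanishing hypothesis $\pi_{-1-n|v|}(R/v)=0$ holds throughout. Iterating reduces every $a_i$ down to $1$, producing an injection
$$\Pic(R/(x_1^{a_1},\ldots,x_m^{a_m})) \hookrightarrow \Pic(R/\mfrak).$$
The target is generated by its suspension class by Baker--Richter / Mathew--Stojanoska, applied to the $\EE_2$-refinement propagated down the tower. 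Since the suspension of $R/(x_1^{a_1},\ldots,x_m^{a_m})$ maps to the suspension of $R/\mfrak$ under this composite, the source must also be generated by its suspension $\Sigma R$.

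The main obstacle is that the simpler injectivity statement \Cref{cor:pic_inj} requires an $\EE_3$ structure on the ambient ring and the condition $n \geq 3$, neither of which is available in the iterated setup: the relevant ambient rings $R/(x_1^{a_1},\ldots,\widehat{x_i^{a_i}},\ldots,x_m^{a_m})$ are only $\EE_1$-$R$-algebras (the $\EE_2$-refinement is assumed only on the final quotient), and the reduction has to pass through small values $n = 1, 2$. The relative-module-space framework of \Cref{definition:relativemodulespace}, together with the generalized \Cref{thm:mod_quot}, must supply injectivity at every such step, extracting what is needed from the $\EE_2$-refinement on the source and the $\EE_1$-map to the target alone. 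Verifying that \Cref{thm:mod_quot} covers each of these reductions, especially for small powers and through changes in the generator $x_i$ being reduced, is the key technical check.
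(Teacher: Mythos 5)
Your overall skeleton matches the paper's: reduce one power at a time down to $R/\mfrak$ using the connectivity statement behind \Cref{thm:mod_quot}, then use that $\pi_*(R/\mfrak)$ is a graded field. But two essential steps are missing, and you have explicitly deferred the first of them as "the key technical check" rather than carried it out. First, \Cref{thm:mod_quot} is a statement about the descent tower $D_\bullet(f)$ of \Cref{definition:E1descenttower}; to apply it to the map $R/(x_1^{a_1},\ldots,x_i^{a_i},\ldots,x_m^{a_m})\to R/(x_1^{a_1},\ldots,x_i^{a_i-1},\ldots,x_m^{a_m})$ one must identify the tower of iterated quotients from \Cref{construction:quotientregularsequence} with a descent tower. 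This is exactly \Cref{lemma:evenquotcomparison} in the paper (proved by base-changing the descent tower of $\SP[x_i]\to\SP$ along a monoidal functor), and without it your invocation of \Cref{thm:mod_quot} at each step is unsupported. Relatedly, the intermediate quotients are only $\EE_1$-$R$-algebras, so there are no Picard groups to be injective between; the correct chaining is that an extension of connected relative module spaces is connected, giving connectivity of $\Mod(R/(x_1^{a_1},\ldots,x_m^{a_m})\mid R/\mfrak)$ (the paper's \Cref{lem:injectivity}), not a composite of Picard injections.

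Second, your endgame does not close. Connectivity of the relative module space says: if $M\otimes_A B\cong B$ for $A=R/(x_1^{a_1},\ldots,x_m^{a_m})$ and $B=R/\mfrak$, then $M\cong A$. To conclude that every $M\in\Pic(A)$ is a shift of $A$, you still need that $B\otimes_A M$ is a \emph{single} shift of $B$. You assert this by appealing to $\Pic(R/\mfrak)$ being generated by its suspension "applied to the $\EE_2$-refinement propagated down the tower," but the hypothesis only provides an $\EE_2$-structure on the top quotient $A$; nothing propagates to $B$, which remains an $\EE_1$-ring with no Picard group. The paper instead computes $\End_B(B\otimes_A M)\simeq \map_A(A,(B\otimes_A M)\otimes_A M^{\vee})\simeq B$ using dualizability of $M$ over $A$, and then uses that every module over the graded field $\pi_*B$ is free to force $B\otimes_A M\simeq\Sigma^n B$. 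Some argument of this kind is needed; as written, your proof establishes at best that the subgroup generated by $\Sigma A$ injects, not that it is all of $\Pic(A)$.
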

	
	We may apply the above to $R = \E_n$, the Lubin--Tate theory, where $\pi_*\E_n$ is local with the maximal ideal $(p,v_1,\cdots,v_{n-1})$. If $M$ is a generalized Moore $\EE_2$-algebra\footnote{By a generalized Moore ($\Ebb_2$-)algebra, we mean a generalized Moore spectrum $M$ that admits an ($\Ebb_2$-)algebra structure.} as in \cite{Burkland_mult_Moore}, the work of {\cite{LZ_profin_Picard,mor2023picard}} provides a profinite descent spectral sequence computing its Picard group with signature:
	\begin{equation*}
		^{\Pic} E_2^{s,t}=\H_c^s(G;\pi_t\cPic_{\K(n)}(\E_n\otimes M))\Longrightarrow \pi_{t-s}\cPic_{\K(n)}(\E_n^{hG}\otimes M), \qquad t-s\ge 0.
	\end{equation*}
	Our results then provide an understanding of the $E_2$-page of this spectral sequence, which in principle give computational access to $\Pic(\L_{\K(n)}M)$.
	
	In \Cref{section:computations}, we give examples of computations using the above techniques and results. In particular, we compute  the Picard groups of the following quotients of (Tate) $\K$-theories:
	\begin{enumerate} 
		\item $\Pic(\KU\llb q\rrb/q^k)=\Z/2$ and $\Pic(\KO\llb q\rrb/q^k)=\Z/8$ for $k\geq 1$ (\Cref{prop:Pic_TateK}).
		\item $\Pic(\KO/\eta^k) = \ZZ/4$ for $k=2,3$ (\Cref{thm:PicKOeta2}) and is $ \ZZ/8$ for $k=4,5$ (\Cref{thm:eta4}).
		\item
		$\Pic(\KO/2^k)=\Z/8$ when $k\ge 6$ 
		(\Cref{thm:PicKO2k}). 
	\end{enumerate}
	At a general height, we obtain the following qualitative result of Picard groups of $\K(n)$-local generalized Moore algebras: 
	\begin{thm}[\ref{thm:Moore_Pic}]
		Let $M=\mathbb{S}/(p^{d_0},v_1^{d_1},\cdots, v_{n-1}^{d_{n-1}})$ be a generalized Moore algebra of type $n$. 
		\begin{enumerate}
			\item The Picard group $\Pic(\L_{\K(n)}M)$ is finite.
			\item When $2p-1> n^2$ and $(p-1)\nmid n$, the Picard group $\Pic(\L_{\K(n)}M)$ is algebraic in the sense of a short exact sequence:
			\begin{equation*}
				\begin{tikzcd}
					0\to \H^1_c(\Gbb_n;\pi_0(\E_n\otimes  M)^\times)\rar & \Pic(\L_{\K(n)}M)\rar &\Z/2 \to 0.
				\end{tikzcd}
			\end{equation*}
		\end{enumerate}
	\end{thm}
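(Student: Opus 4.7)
The plan is to read off $\Pic$ from the profinite Picard descent spectral sequence
\begin{equation*}
^{\Pic}E_2^{s,t} = \H_c^s(\Gbb_n; \pi_t \cPic_{\K(n)}(\E_n \otimes M)) \Longrightarrow \pi_{t-s} \cPic_{\K(n)}(\L_{\K(n)}M)
\end{equation*}
on the total degree $t - s = 0$ diagonal. The crucial input is \Cref{iteratedquotient}, which identifies $\pi_0 \cPic_{\K(n)}(\E_n \otimes M) = \Pic(\E_n/I) = \Z/2$ (where $I=(p^{d_0},v_1^{d_1},\dots,v_{n-1}^{d_{n-1}})$), together with the standard identification $\pi_1 \cPic = (\pi_0)^\times$ and $\pi_i \cPic = \pi_{i-1}$ for $i \ge 2$; the $\Gbb_n$-action on $\Z/2 = \langle \Sigma(\E_n/I) \rangle$ is trivial because the suspension generator is intrinsically defined.

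For part (1), I would first observe that $\pi_*(\E_n/I)$ is \emph{finite} in each degree: quotienting the complete regular local graded ring $\pi_*\E_n$ by a regular sequence generating a power of the maximal ideal yields a graded ring that is finite in each grading. Consequently every diagonal term $E_2^{s,s}$ is continuous $\Gbb_n$-cohomology with finite $p$-power torsion coefficients, and hence finite. A horizontal vanishing line for the Picard spectral sequence --- a consequence of the finite virtual cohomological dimension of $\Gbb_n$ together with the uniformly bounded $p$-power torsion of the coefficient modules --- truncates the diagonal at some finite $s$, so only finitely many finite groups contribute to the filtration on $\Pic(\L_{\K(n)}M)$, whence the conclusion.

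For part (2), under the hypotheses $2p - 1 > n^2$ and $(p-1) \nmid n$, the goal is to show $E_\infty^{s,s} = 0$ for all $s \ge 2$, so that the filtration on $\Pic(\L_{\K(n)}M)$ has exactly two nontrivial associated graded pieces. Since $\pi_*(\E_n/I)$ is concentrated in even degrees, $E_2^{s,s}$ vanishes for even $s \ge 2$ by parity. The assumption $(p-1) \nmid n$ makes $\Gbb_n$ $p$-torsion free, so its cohomological dimension equals the virtual dimension $n^2$, producing a horizontal vanishing line at $s = n^2$. For odd $3 \le s \le n^2$, I would invoke a sparseness argument analogous to the classical algebraicity criteria for the Picard group of the $\K(n)$-local sphere in the spirit of Hopkins--Mahowald--Sadofsky and Heard, which the numerical hypothesis $2p - 1 > n^2$ is engineered to enforce in the module setting. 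The extension of $E_\infty^{0,0} = \Z/2$ by $E_\infty^{1,1} = \H_c^1(\Gbb_n; \pi_0(\E_n \otimes M)^\times)$ then gives the asserted short exact sequence.

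The principal obstacle I anticipate is the vanishing of $E_2^{s,s}$ for odd $3 \le s \le n^2$ under the sparseness bound $2p - 1 > n^2$. This requires carefully transporting the classical vanishing-line arguments from the $\K(n)$-local sphere to modules over $\L_{\K(n)}M$, checking that the numerical bound suffices in our setting, and ruling out potential off-diagonal $d_r$ differentials arriving on the total degree zero diagonal from terms that the parity argument does not already kill.
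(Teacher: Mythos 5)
Your overall strategy is the same as the paper's: run the profinite descent spectral sequence $^{\Pic}E_2^{s,t}=\H_c^s(\Gbb_n;\pi_t\cPic_{\K(n)}(\E_n\otimes M))$, feed in $\Pic(\E_n\otimes M)=\Z/2$ from \Cref{iteratedquotient}, and analyze the $t-s=0$ diagonal. Part (1) is essentially the paper's argument, with one caveat: ``finite coefficients, hence finite continuous cohomology'' is not a formal implication for profinite groups; it uses finiteness properties of $\Gbb_n$ (the paper handles $s=1$ by counting continuous crossed homomorphisms out of a topologically finitely generated group, and for $s>1$ cites the finiteness established in the proof of \cite[Lemma 4.21]{fixedpt}). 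Also note that $\pi_0(\E_n\otimes M)^\times$ is finite but not $p$-power torsion, so the coefficient modules on the diagonal are not uniformly $p$-power torsion as you assert.

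The genuine gap is in part (2), and you have flagged it yourself: the vanishing of $E_2^{s,s}$ for odd $3\le s\le n^2$ is the entire content of the algebraicity statement, and your proposal leaves it as an ``anticipated obstacle'' rather than proving it. The paper closes this with a concrete sparseness input that you do not identify: by Goerss--Hopkins, $\H_c^s(\Gbb_n;\pi_t(\E_n))=0$ unless $2(p-1)\mid t$, and this passes to $\H_c^s(\Gbb_n;\pi_t(\E_n)/\Ical)$ for the invariant ideal $\Ical=(p^{d_0},u_1^{d_1},\cdots,u_{n-1}^{d_{n-1}})$, since the generators have degrees divisible by $2(p-1)$ (indeed $|v_i|=2(p^i-1)$), so the quotient is built from sparse modules by degree shifts preserving sparseness. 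On the diagonal with $s\ge 2$ the coefficients are $\pi_{s-1}(\E_n\otimes M)$, so nonvanishing forces $2(p-1)\mid s-1$, i.e.\ $s\ge 2p-1$; combined with the horizontal vanishing line at $s=\cd_p(\mathbb{S}_n)=n^2<2p-1$ coming from $(p-1)\nmid n$, every diagonal term with $s\ge 2$ vanishes on $E_2$. This is much stronger than your parity argument (which only kills even $s$) and it also disposes of your worry about incoming differentials: there is nothing left on the diagonal to receive them, and the same sparseness plus the vanishing line shows $E_2^{1,1}$ supports no differentials (its potential targets $E_r^{1+r,r}$ require $2(p-1)\mid r-1$, hence lie above the vanishing line), while $E_2^{0,0}=\Z/2$ is a permanent cycle because it is realized by $\Sigma\L_{\K(n)}M$. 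Without locating and transporting this specific sparseness statement, the proof of part (2) is incomplete.
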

	At height $1$, we compute the Picard groups of $\K(1)$-local Moore algebras:
	\begin{thm}[\ref{thm:Pic_S0pk}, \ref{thm:Pic_S02k}]
		\begin{equation*}
			\Pic\left(\Sbb_{\K(1)}/p^k\right)
			=\begin{cases}\Z/2^{k-1}\oplus\Z/4\oplus \Z/2, & p=2\text{ and }k\ge 6;\\
				\Z/(2(p-1)p^{k-1}), & p>2\text{ and }k\ge 3.
			\end{cases}
		\end{equation*}
	\end{thm}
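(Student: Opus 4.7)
The plan is to apply the profinite Picard descent spectral sequence of the introduction with $n=1$, $G = \Gbb_1 = \Z_p^\times$, and $M = \Sbb/p^k$, converging to $\pi_*\cPic(\L_{\K(1)}\Sbb/p^k)$. The input at the $\E_1$-level is supplied by \Cref{iteratedquotient}: applied to $R = \E_1$ with $\pi_*\E_1 = \Z_p\llb u^{\pm 1}\rrb$ and regular sequence $(p)$, it gives $\Pic(\E_1/p^k) = \Z/2$ generated by $\Sigma$, while $\pi_1\cPic(\E_1/p^k) = (\Z/p^k)^\times$ and $\pi_i\cPic(\E_1/p^k) = \pi_{i-1}\E_1/p^k$ for $i \geq 2$. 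The $\Gbb_1$-action is trivial on $\pi_0\E_1 = \Z_p$ and twists the Bott class via $\psi^a(u) = au$.

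For odd primes $p > 2$, the conceptual heart of the computation is that in $\E_1/p^k$ one has $\Sigma^2 \simeq u \cdot \Sigma^0$, and $u^m$ admits a $\Gbb_1$-equivariant trivialization modulo $p^k$ precisely when $a^m \equiv 1 \pmod{p^k}$ for every $a \in \Z_p^\times$, i.e.\ exactly when $(p-1)p^{k-1} \mid m$ (the exponent of $(\Z/p^k)^\times$). Thus $\Sigma$ has order exactly $2(p-1)p^{k-1}$ in $\Pic(\Sbb_{\K(1)}/p^k)$. Matching this against the descent spectral sequence --- whose only relevant contributions to $\pi_0\cPic$ are $\H^0_c(\Gbb_1; \Z/2)$ and $\H^1_c(\Gbb_1; (\Z/p^k)^\times)$, the latter being $(\Z/p^k)^\times$ itself since the action is trivial, with higher-cohomological contributions killed in the range $k \geq 3$, $p > 2$ --- yields total cardinality $2(p-1)p^{k-1}$, and the claimed cyclicity forces a non-split extension whose witness is the explicit non-equivariant trivialization of $\Sigma^2$ described above.

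For $p=2$ I would factor the descent through $\KO$: since $\KO_2^\wedge = \E_1^{h\{\pm 1\}}$ and $\L_{\K(1)}\Sbb = (\KO_2^\wedge)^{h\Z_2}$ with the $\Z_2$ topologically generated by $\psi^3$, I apply the Picard descent spectral sequence to this second stage, using \Cref{thm:PicKO2k} that $\Pic(\KO_2^\wedge/2^k) = \Z/8$ for $k \geq 6$ as input. Since $\Z_2$ has cohomological dimension $1$, the $E_2$-page is concentrated in cohomological degrees $0$ and $1$, and the contributions to $\pi_0\cPic$ are $\H^0(\Z_2; \Z/8)$ and $\H^1(\Z_2; \pi_1\cPic(\KO/2^k))$. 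The $\psi^3$-action is trivial on $\pi_0\KO/2^k$ but multiplies $\pi_{4j}\KO/2^k$ by $9^j$, and one must carefully compute the resulting cohomology groups using that $\psi^3-1$ acts by $9^j-1$, whose $2$-adic valuation controls the output. Unlike the odd prime case, the extensions split here: $\KO$-descent has already absorbed the Bott-obstruction into the $\Z/4$ factor of $\Pic(\KO/2^k)$, leaving the direct sum $\Z/2^{k-1} \oplus \Z/4 \oplus \Z/2$.

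The main obstacle is the spectral sequence analysis itself: checking vanishing of all differentials touching columns that contribute to $\pi_0\cPic$, and correctly resolving extensions. At odd primes this amounts to exhibiting the explicit $\Gbb_1$-equivariant trivialization of $\Sigma^{2(p-1)p^{k-1}}$ by $u^{(p-1)p^{k-1}}$; at $p=2$ it requires careful bookkeeping of $\psi^3$ on $\pi_*\KO/2^k$ to verify the three summands split rather than coalesce. I would model both analyses on the computations in \Cref{section:computations} --- especially those for $\Pic(\KU\llb q\rrb/q^k)$, $\Pic(\KO\llb q\rrb/q^k)$, and $\Pic(\KO/2^k)$ --- which implement precisely the analogous descent story with $q$ in place of $p$, and whose differential and extension analyses port over with only minor modifications.
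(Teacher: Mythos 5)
Your odd-prime argument is essentially the paper's: descent along $\Gbb_1=\Zpx$ with $\cd_p\Zpx=1$ yields the extension $0\to\H^1_c(\Zpx;(\Z/p^k)^\times)\to\Pic(\Sbb_{\K(1)}/p^k)\to\Z/2\to0$ with $\H^1_c(\Zpx;(\Z/p^k)^\times)\cong(\Z/p^k)^\times$, and the $2(p-1)p^{k-1}$-periodicity of $\Sbb_{\K(1)}/p^k$ forces the total group to be cyclic of that order. This half is fine; the only quibble is that the higher-cohomological terms vanish because $\cd_p\Zpx=1$ for $p$ odd, not because $k\ge 3$ --- that hypothesis is only needed so that $\Sbb/p^k$ carries an $\EE_2$-structure and the Picard group is defined.

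The $p=2$ half contains a genuine error. Descent along $1+4\Z_2$ (cohomological dimension $1$, trivial action on $\pi_0(\KO/2^k)^\times$ and on $\Pic(\KO/2^k)$) gives the extension $0\to(\Z/2^k)^\times\to\Pic(\Sbb_{\K(1)}/2^k)\to\Z/8\to0$ with kernel $(\Z/2^k)^\times\cong\Z/2\oplus\Z/2^{k-2}$. Your claim that this extension splits would produce $\Z/8\oplus\Z/2\oplus\Z/2^{k-2}$, which for $k\ge 5$ is \emph{not} isomorphic to the asserted answer $\Z/2^{k-1}\oplus\Z/4\oplus\Z/2$ (the largest cyclic summand of the split group is $2^{\max(k-2,3)}$, not $2^{k-1}$); the extension is in fact non-split, and "bookkeeping of $\psi^3$ on $\pi_*\KO/2^k$" does not by itself resolve it, since the $E_2$-term $\H^1_c(1+4\Z_2;(\Z/2^k)^\times)$ is insensitive to how the extension assembles. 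The paper resolves it by mapping the known non-split extension for $\Pic(\Sp_{\K(1)})\cong\Z_2\oplus\Z/4\oplus\Z/2$ from \cite{HMS_picard} onto yours: the reduction $\Z_2^\times\to(\Z/2^k)^\times$ is surjective with kernel $1+2^k\Z_2$, so by the snake lemma $\Pic(\Sbb_{\K(1)}/2^k)$ is the quotient of $\Z_2\oplus\Z/4\oplus\Z/2$ by the image of $1+2^k\Z_2$; since the topological generator of $1+4\Z_2$ maps to $(2,1,0)$, that image is $2^{k-1}\Z_2\oplus 0\oplus 0$, giving $\Z/2^{k-1}\oplus\Z/4\oplus\Z/2$. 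Some such comparison (or another concrete identification of the extension class) is needed; as written, your method lands on the wrong group.
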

	Note that when  $p=2, 2\le k\le 4$ or $p>2, k=2$, it is unknown whether $\Sbb_{\K(1)}/p^k$ has an $\Ebb_2$-ring structure.  For $p=2$ and $k=5$, we can only determine $\Pic\left(\KO/32\right)$ and $\Pic\left(\Sbb_{\K(1)}/32\right)$ up to a possible extension by $\Z/2$ (\Cref{rem:PicKO32}).
	\subsection*{Future directions}
	For all primes and heights, the algebraic $\K(n)$-local Picard group has been completely computed by Barthel--Schlank--Stapleton--Weinstein in \cite{BSSW_Picard}. In particular, their result implies that when $2p-1>n^2$ and $(p-1)\nmid n$, we have $\Pic\left(\Sp_{\K(n)}\right)=\Zp\oplus \Zp\oplus\Z/(2p^n-2)$, generated by the regular and the determinant $\K(n)$-local spheres. Analyzing the orders of their images in $\Pic(\L_{\K(n)}M)$ for a generalized Moore algebra $M$, one can ask:
	\begin{quest}[\ref{quest:Pic_M}]
		Let $M=\Sbb/(p^{d_0},\cdots, v_{n-1}^{d_{n-1}})$ be a generalized Moore $\Ebb_2$-algebra of type $n$ such that $v_n^{p^N}$ is the minimal periodicity of $M$. Under the assumptions $2p-1>n^2$ and $(p-1)\nmid n$, is the map $\Pic\left(\Sp_{\K(n)}\right) \longrightarrow \Pic(\L_{\K(n)}M)$ surjective so that $ \Pic(\L_{\K(n)}M)\cong \Z/p^N\oplus\Z/p^{d_0-1}\oplus\Z/(2p^n-2)$? 
	\end{quest}
	More generally, one might wonder about the surjectivity of the base change map $\Pic(R/v^{n+1})\to \Pic(R/v^n)$, whereas only injectivity is addressed in \Cref{thm:mod_quot}. In all examples studied in this paper, the map is always surjective, but we would be surprised if this were always true. 
	\begin{quest}
		Is $\Pic(R/v^{n+1})\to \Pic(R/v^n)$ always surjective? 
	\end{quest}
	A related topic is the (relative) Brauer groups of quotient $\Ebb_3$-ring spectra.  An interesting question to further study is whether there are versions of the tools developed here to study relative Brauer groups, for example for quotient maps of the form $R/v^{n+1}\to R/v^n$.
	
	\subsection*{Notation and Conventions}
	We use category to mean $\infty$-category in the sense of Joyal and Lurie.
	
	\begin{enumerate}
		\item Given a category $\cC$, we use $\cC^{\simeq}$ to denote the core groupoid of objects and isomorphisms in $\cC$.
		\item Given a monoidal category $\cC$, we use $\cPic(\cC)$ to denote the space (groupoid) of invertible objects, and $\Pic(\cC)$ to denote $\pi_0(\cPic(\cC))$, which is the Picard group of $\cC$.
		\item We use $\Map_\cC(a,b)$ to denote the mapping space, and use $\map_{\cC}(a,b)$ to denote the mapping spectrum in a stable category. 
	\end{enumerate}
	\subsection*{Acknowledgments}
	We want to thank Ko Aoki, Tobias Barthel, Agn\`es Beaudry, Robert Burklund, David Gepner, Paul Goerss, Hans-Werner Henn, Michael Mandell, Akhil Mathew, Lennart Meier, Itamar Mor, and  Vesna Stojanoska for many helpful and supportive discussions and comments. We would also like to thank the anonymous referee for the helpful comments in improving the paper. 
	
	I. Levy was supported by the Clay Research Fellowship.  Some of the work and revision were done when G. Li and N. Zhang were at the Max Planck Institute for Mathematics. They would like to thank the MPIM for its hospitality and support. N. Zhang was partially supported by the NSF Grant DMS-2348963 (formerly DMS-2304719). 
	\section{Tools for studying Picard groups}\label{section:tools}
	In this section, we develop tools to study Picard groups. In doing so, it will be useful to work with categories that are not monoidal, such as perfect module categories of $\EE_1$-rings. Although $\EE_1$-rings do not have a reasonable notion of a Picard group, there is a relative version of the notion that is often well behaved, namely \Cref{definition:relativemodulespace} below. It will be convenient to formulate some of our definitions even more generally, namely in terms of pointed small idempotent complete stable $\infty$-categories. We use $\Cat^{\perf}$ to denote the category of small idempotent complete stable $\infty$-categories, and let $\Cat_*^{\perf}$ denote the category of $\EE_0$-algebras in $\Cat^{\perf}$, that is, the category of $\cC\in \Cat^{\perf}$ equipped with a specified object $c \in C$. As an example, for an $\EE_1$-ring $R$ the category $\Perf(R)$ of perfect left $R$-modules naturally lives in $\Cat_*^{\perf}$, as it is equipped with the unit object $R$.
	
	\subsection{Relative module spaces and pullbacks}
	\begin{defn}\label{definition:relativemodulespace}
		Let $f\colon  \cC \to \cD$ be a map in $\Cat_*^{\perf}$. The \textit{relative object space} of $f$ is $\cC^{\simeq} \times_{\cD^{\simeq}}*$, where we use the specified object of $\cD$. The specified object of $\cC$ makes this into a pointed space. In the case $f$ comes from base change along a map $g\colon R \to S$ of $\EE_1$-rings, we call this the \textit{relative module space} of $g$, and denote it $\Perf(R\mid S)$.
	\end{defn}
	In other words, given $g\colon R \to S$, a point in the relative module space of $R$ over $S$ is a perfect $R$-module $M$ and an equivalence $M\otimes_{R}S \cong S$.
	
	The relative module space is closely related to Picard groups when the functor is conservative:
	
	\begin{lem}\label{lemma:relativepicardgroups}
		Let $f\colon  \cC \to \cD$ be a map of rigid\footnote{This means that all objects are left and right dualizable.} $\EE_{k}$-monoidally\footnote{This means that the tensor product commutes with finite colimits.} stable idempotent complete categories for $k\geq1$. Then the relative object space of $f$ is naturally an $\EE_{k}$-space. If $f$ is furthermore conservative, then there is a fiber sequence of $\EE_{k}$-spaces
		$$\cC^{\simeq} \times_{\cD^{\simeq}}* \longrightarrow  \cPic(\cC) \longrightarrow  \cPic(\cD).$$
	\end{lem}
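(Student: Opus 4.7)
The approach is to produce the $\EE_k$-structure on the relative object space by formal manipulation with monoidal cores and limits of $\EE_k$-spaces, and then, under conservativity, to identify it with an honest Picard-theoretic fiber. For the first assertion, for any $\EE_k$-monoidal stable category the core groupoid inherits an $\EE_k$-space structure whose unit is the monoidal unit, since the core functor is a right adjoint and hence preserves products. An $\EE_k$-monoidal functor induces an $\EE_k$-map on cores sending unit to unit, so $f$ yields an $\EE_k$-map $f^\simeq\colon \cC^\simeq \to \cD^\simeq$ of pointed $\EE_k$-spaces. Since the forgetful functor from $\EE_k$-spaces to pointed spaces preserves limits, the fiber of $f^\simeq$ over the basepoint inherits an $\EE_k$-structure, giving the first claim.

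For the fiber sequence, I claim the tautological inclusion $\cPic(\cC) \times_{\cPic(\cD)} * \hookrightarrow \cC^\simeq \times_{\cD^\simeq} *$ is an equivalence. Granting this, applying the first part to the restriction of $f$ to Picard subcategories yields the claimed fiber sequence of $\EE_k$-spaces. The problem thus reduces to showing that any $X \in \cC$ whose image in $\cD$ is equivalent to the unit is already invertible in $\cC$.

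The main (and only) substantive step is this invertibility claim, for which I would use rigidity: $X$ has a dual $X^\vee$, and $X$ is invertible iff the evaluation $X^\vee \otimes X \to \mathbf{1}_{\cC}$ and coevaluation $\mathbf{1}_{\cC} \to X \otimes X^\vee$ are equivalences. Applying the monoidal functor $f$ turns these into the evaluation and coevaluation of $f(X) \simeq \mathbf{1}_{\cD}$, which are equivalences. Since $f$ is an exact functor between stable categories that is conservative, it reflects equivalences, so the original maps in $\cC$ are already equivalences and $X$ is invertible. Everything else is a formal consequence of $\EE_k$-structures being closed under limits.
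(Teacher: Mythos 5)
Your proposal is correct and follows essentially the same route as the paper: the $\EE_k$-structure comes from the fact that all spaces and maps in the defining pullback are $\EE_k$, and the fiber sequence reduces to showing that an object with invertible image is invertible, which both arguments settle by using dualizability to express invertibility as the (co)evaluation maps being equivalences and then invoking conservativity. No gaps.
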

	
	\begin{proof}
		We note that when $f$ is $\EE_k$-monoidal, the spaces involved in defining the relative module space are all $\EE_{k}$-spaces, as are the maps between them. Thus $\cC^{\simeq} \times_{\cD^{\simeq}}*$ is an $\EE_{k}$-space.
		
		The space $\cC^{\simeq} \times_{\cD^{\simeq}}*$  is defined by the fiber sequence $$\cC^{\simeq} \times_{\cD^{\simeq}}* \longrightarrow \cC^\simeq \longrightarrow  \cD^\simeq.$$
		Since $\cPic(\cC)$ is a collection of components inside $\cC^{\simeq}$, it suffices to see that the image of $\cC^{\simeq} \times_{\cD^{\simeq}}* \to \cC^{\simeq}$ is contained in $\cPic(\cC)$. In other words, we need to show that if $M \in \cC$ such that $f(\cC)$ is invertible, then $M$ is invertible. But $M$ is dualizable, so invertibility is the condition that the maps $M\otimes M^{\vee} \to R$ and $R \to M^{\vee}\otimes M$ are equivalences. By conservativity of $f$, this can be checked after base change to $S$, where it follows because $M\otimes_RS\cong S$.
	\end{proof}
	
	\begin{rem}
		If $R \to S$ is a map of $\EE_1$-rings, even though the Picard group does not make sense, there is still a fiber sequence	
		$$\Perf(R\mid S)' \longrightarrow \mathrm{BGL}_1(R) \longrightarrow \mathrm{BGL}_1(S),$$
		where $\Perf(R\mid S)'$ is the subspace of $\Perf(R\mid S)$ where the underlying $R$-module is isomorphic to $R$.
	\end{rem}
	
	The first result we prove is about relative module spaces of pullbacks of $\EE_1$-rings. In fact we prove a more general result allowing us to understand relative module spaces of a pullback of two $\EE_1$-rings along a \textit{bimodule}, which is described in \Cref{const:pullbackbimodule}.
	\begin{defn}[{\cite[Definition 5]{Tamme_excision2018}}]\label{defn:ori_pb}
		Let $\Acal\xrightarrow{p} \Ccal\xleftarrow{q} \Bcal$ be a diagram of $\infty$-categories. The \textit{oriented pullback} of this diagram $\Acal\vec{\times}_{p,\Ccal,q}\Bcal$ (denoted simply by $\Acal\vec{\times}_{\Ccal}\Bcal$ when clear) is the pullback of $\infty$-categories:
		\begin{equation*}
			\begin{tikzcd}
				\Acal\vec{\times}_{p,\Ccal,q}\Bcal\rar\dar\ar[dr,phantom,"\lrcorner",very near start]& \Fun(\Delta^1,\Ccal)\dar\\
				\Acal\times \Bcal\rar["p\times q"]&\Ccal\times \Ccal. 
			\end{tikzcd}
		\end{equation*}
		Objects in $\Acal\vec{\times}_{p,\Ccal,q}\Bcal$ are triples $(X,Y,f\colon p(X)\to q(Y))$ where $X\in \Acal$ and $Y\in \Bcal$.
	\end{defn}
	
	\begin{rem}
		The pullback $\Acal\times_{\Ccal}\Bcal$ is the full subcategory of the oriented pullback $\Acal\vec{\times}_{\Ccal}\Bcal$ spanned by objects where the comparison map $f$ is an equivalence.
	\end{rem}
	We recall the notion of a Milnor context (over the sphere) from \cite{LT_pushouts}:
	\begin{defn}\label{definition:milnorcontext}
		A Milnor context is a triple $(R,S,M)$ where $R,S$ are $\EE_1$-rings, and $M$ is a unital $R$-$S$-bimodule. A map of Milnor contexts $(R,S,M) \to (R',S',M')$ is a pair of maps of $\EE_1$-rings $R \to R', S \to S'$ and an $R$-$S$-bimodule map $M \to M'$ where $M'$ is viewed as an $R$-$S$-bimodule via the forgetful functor.
	\end{defn}
	\begin{const}[{\cite[Construction 2.5]{LT_pushouts}}]\label{const:pullbackbimodule}
		Given a Milnor context $(R,S,M)$, we can form the oriented pullback \begin{equation*}
			\Mod(R)\vec{\times}_M\Mod(S):=\Mod(R)\vec{\times}_{\id,\Mod(R),M\otimes_S(-)}\Mod(S)
		\end{equation*}using the functor $M\otimes_S(-)\colon \Mod(S) \to \Mod(R)$ and the identity functor on $\Mod(R)$. By \cite[Lemma 2.3]{LT_pushouts}, this is a compactly generated category with compact objects given by \begin{equation*}
			\Perf(R)\vec{\times}_M\Perf(S):=\Perf(R)\vec{\times}_{\id,\Mod(R),M\otimes_S(-)}\Perf(S).
		\end{equation*} The unit on $M$ gives rise to an object $I_M=(R,S,R \to M = M\otimes_{S}S)$, making $\Perf(R)\vec{\times}_{M}\Perf(S)$ into an object of $\Cat_*^{\perf}$. 
		
		The pullback ring $R\times_M S$ is defined as the opposite of the endomorphism ring of $I_M$. The Schwede--Shipley theorem \cite[Theorem 7.1.2.1]{Lurie_HA} gives a fully faithful embedding  $\Perf(R\times_MS) \to \Perf(R)\vec{\times}_M\Perf(S)$ in $\Cat_*^{\perf}$. In the case that the unital bimodule $M$ comes from a span of $\EE_1$-rings $R \to M \leftarrow S$, then $R\times_M S$ agrees with the pullback of rings \cite[Remark 2.6]{LT_pushouts}.
	\end{const}
	
	A key ingredient to getting an upper bound on relative module spaces of pullbacks is the following lemma, which describes relative object spaces of certain oriented pullbacks.
	
	\begin{lem}\label{lemma:pullbackupperboundcat}
		Let $(R,S,M) \to (R',S',M')$ be a map of Milnor contexts. Given $N \in \Perf(R), R'\cong R'\otimes_RN$, and $N' \in \Perf(S), S' \cong S'\otimes_SN'$, so that $N,N'$ give rise to points in $\Perf(R\mid R')$ and $\Perf(S\mid S')$, respectively. Let $F$ denote the fiber of the natural map		
		$$(\Perf(R)\vec{\times}_{M}\Perf(S))^{\simeq} \times_{(\Perf(R')\vec{\times}_{M'}\Perf(S'))^{\simeq} }* \longrightarrow  \Perf(R\mid R')\times \Perf(S\mid S')$$ over the point given by $(N,N')$.
		
		Then $F$ is the fiber of the map
		\[\Map_{\Mod(R)}(N,M\otimes_SN')\longrightarrow \Omega^{\infty}M'\] over the unit, where the map sends a map $g\colon N \to M\otimes_SN'$ to the point corresponding to the $R'$-module map $$R' \cong R'\otimes_RN \longrightarrow R'\otimes_RM\otimes_SN' \longrightarrow  M'\otimes_{S}N' \cong M'\otimes_{S'}S'\otimes_{S}N' \cong M'.$$
	\end{lem}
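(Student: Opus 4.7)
The plan is to unwind the definition of the oriented pullback from \Cref{defn:ori_pb} to identify the source of the map and the induced base-change functor explicitly, then compute $F$ by taking fibers in stages.

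First I would pass to groupoid cores in the pullback square defining the oriented pullback, so that for any fixed pair $(X_R,X_S)\in \Mod(R)^\simeq\times \Mod(S)^\simeq$ the fiber of $(\Mod(R)\vec{\times}_M\Mod(S))^\simeq\to \Mod(R)^\simeq\times \Mod(S)^\simeq$ over $(X_R,X_S)$ is canonically identified with $\Map_{\Mod(R)}(X_R,M\otimes_S X_S)$; the analogous statement holds for the primed oriented pullback. A point of the source of the map in the lemma thus consists of a triple $(X_R,X_S,\phi\colon X_R\to M\otimes_S X_S)$ together with a path in $(\Mod(R')\vec{\times}_{M'}\Mod(S'))^\simeq$ from its base-change to $I_{M'}=(R',S',R'\to M')$.

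Second, I would compute $F$ by taking the fiber over $(N,N')$ in stages. Because the specified trivializations $R'\otimes_R N\simeq R'$ and $S'\otimes_S N'\simeq S'$ already identify the base-change of the underlying pair with $(R',S')$, the remaining datum is a choice of $\phi\colon N\to M\otimes_S N'$ together with a path (after inserting the trivializations) between its base-change and the unit morphism $R'\to M'$ in $\Map_{\Mod(R')}(R',M')\simeq \Omega^\infty M'$. This realizes $F$ as the fiber at the unit of a natural map $\Map_{\Mod(R)}(N,M\otimes_S N')\to \Omega^\infty M'$.

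Finally, I would identify this base-change map with the composite written in the lemma. The functor $\Mod(R)\vec{\times}_M\Mod(S)\to \Mod(R')\vec{\times}_{M'}\Mod(S')$ induced by the map of Milnor contexts sends a morphism $\phi$ to the composite $R'\otimes_R X_R\to R'\otimes_R M\otimes_S X_S\to M'\otimes_S X_S\simeq M'\otimes_{S'}S'\otimes_S X_S$, built from $\phi$, the bimodule map $M\to M'$ together with the identification $R'\otimes_R M'\simeq M'$, and the evident natural equivalence; evaluating at $X_R=N$, $X_S=N'$ and inserting the specified trivializations reproduces exactly the displayed composite. The main obstacle is this last coherence check: carefully tracking the base-change functor at the level of morphisms through the pullback defining the oriented pullback, and verifying compatibility with the unital structures on $M$, $M'$ and with the identification $\Map_{\Mod(R')}(R',M')\simeq \Omega^\infty M'$ sending the unit to the unit.
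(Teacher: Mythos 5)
Your proposal is correct and follows essentially the same route as the paper: both arguments reduce the computation to the defining pullback square of the oriented pullback, use that the fiber of $(\Mod(R)^{\Delta^1})^{\simeq}\to(\Mod(R)\times\Mod(R))^{\simeq}$ over a pair of objects is the mapping space, and commute fibers with pullbacks to exhibit $F$ as the fiber of the induced map of mapping spaces $\Map_{\Mod(R)}(N,M\otimes_SN')\to\Map_{\Mod(R')}(R',M')\simeq\Omega^{\infty}M'$. The paper packages the staged fiber computation into a single cube of pullback squares, but the content is the same, including the final (only sketched in the paper as well) identification of the base-change map with the displayed composite.
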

	
	\begin{proof}
		Consider the following cube, where the top and bottom faces are pullback squares by \Cref{defn:ori_pb} of the oriented pullback:
		\setlength{\perspective}{2pt}
		\[\begin{tikzcd}[row sep={40,between origins}, column sep={80,between origins}]
			&[-\perspective] (\Mod(R)\vec{\times}_M \Mod(S))^{\simeq} \ar{rr}\ar{dd}\ar{dl} &[\perspective] &[-\perspective] (\Mod(R)^{\Delta^1})^{\simeq}\vphantom{\times_{S_1}} \ar{dd}\ar{dl} \\[-\perspective]
			(\Mod(R)\times \Mod(S))^{\simeq}  \ar[crossing over]{rr} \ar{dd} & & (\Mod(R)\times \Mod(R))^{\simeq} \\[\perspective]
			& (\Mod(R')\vec{\times}_{M'} \Mod(S'))^{\simeq}  \ar{rr} \ar{dl} & &  (\Mod(R')^{\Delta^1})^{\simeq}\vphantom{\times_{S_1}} \ar{dl} \\[-\perspective]
			(\Mod(R')\times \Mod(S'))^{\simeq}\ar{rr} && (\Mod(R')\times \Mod(R'))^{\simeq} \ar[from=uu,crossing over]
		\end{tikzcd}\]
		Taking vertical fibers and pulling back around our chosen point in $\Perf(R\mid R')\times \Perf(S\mid S')$, we get a pullback square		
		\[\begin{tikzcd}
			F\ar[r]\ar[d] \ar[dr,phantom,very near start,"\lrcorner",end anchor= west]& [30 pt](\Mod(R)^{\Delta^1})^{\simeq}\times_{(\Mod(R')^{\Delta^1})^{\simeq}}*\ar[d]\\
			* \ar[r] &(\Mod(R)\times \Mod(R))^{\simeq}\times_{(\Mod(R')\times \Mod(R'))^{\simeq}}*.
		\end{tikzcd}\]
		Since pullbacks commute with fibers, we can refactor this as the pullback
		\[\begin{tikzcd}
			F\ar[r]\ar[d] \ar[dr,phantom,very near start,"\lrcorner",end anchor= west]& *\ar[d]\\
			(\Mod(R)^{\Delta^1})^{\simeq}\times_{(\Mod(R)\times \Mod(R))^{\simeq}}* \ar[r] &(\Mod(R')^{\Delta^1})^{\simeq}\times_{(\Mod(R')\times \Mod(R'))^{\simeq}}* .
		\end{tikzcd}\]
		However each of the terms in this pullback is a mapping space in $\Mod(R)$ and $\Mod(R')$ respectively, and so one sees that this is the description given in the lemma of $F$ as a fiber of mapping spaces.
	\end{proof}
	
	As a consequence, we get an "upper bound" on relative module spaces of pullbacks:
	
	\begin{prop}\label{prop:pullbackupperbound}
		Let $(R,S,M) \to (R',S',M')$ be a map of Milnor contexts. Given $N \in \Mod(R\mid R'), N' \in \Perf(S\mid S')$, the fiber of the map
		$$\Perf(R\times_M S \mid  R'\times_{M'}S') \longrightarrow \Perf(R\mid R')\times \Perf(S\mid S')$$ over the point given by $N,N'$ is a collection of components inside the fiber of the map
		\[\Map_{\Mod(R)}(N,M\otimes_SN')\longrightarrow \Omega^{\infty}M'\] over the unit, where the map sends a map $g\colon N \to M\otimes_SN'$ to the point corresponding to the $R'$-module map $$R' \cong R'\otimes_RN \longrightarrow R'\otimes_RM\otimes_SN' \longrightarrow M'\otimes_{S'}S'\otimes_{S}N' \cong M'.$$
	\end{prop}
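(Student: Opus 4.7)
The plan is to deduce this from \Cref{lemma:pullbackupperboundcat} via the Schwede--Shipley embedding $\iota\colon \Mod^{\omega}(R\times_M S)\hookrightarrow \Mod(R)^{\omega}\vec{\times}_M\Mod(S)^{\omega}$ provided by \Cref{const:pullbackbimodule}, together with its primed analogue $\iota'$. Being fully faithful, both $\iota$ and $\iota'$ are inclusions of components on cores. First I would unpack the naturality of \Cref{const:pullbackbimodule} with respect to the map of Milnor contexts: the endomorphism object $I_M$ is compatible with $I_{M'}$, so base change along $R\times_M S\to R'\times_{M'}S'$ agrees, under $\iota$ and $\iota'$, with the evident base change functor on the oriented pullback side. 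Verifying this naturality is the one point that requires care.

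Granted this, I would use that $\iota'$ sends the free module $R'\times_{M'}S'$ to $I_{M'}$ and is an equivalence on the component of the unit, so that the fiber over $R'\times_{M'}S'$ on the module side coincides with the fiber over $I_{M'}$ on the oriented pullback side. This lets me rewrite the relative module space as
\[\Mod(R\times_M S\mid R'\times_{M'}S')\simeq \Mod^{\omega}(R\times_M S)^{\simeq}\times_{(\Mod(R)^{\omega}\vec{\times}_M\Mod(S)^{\omega})^{\simeq}} F_{\vec{\times}},\]
where $F_{\vec{\times}}=(\Mod(R)^{\omega}\vec{\times}_M\Mod(S)^{\omega})^{\simeq}\times_{(\Mod(R')^{\omega}\vec{\times}_{M'}\Mod(S')^{\omega})^{\simeq}}\{I_{M'}\}$ is the relative object space of the oriented pullback. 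Since the first leg is fully faithful, this identifies $\Mod(R\times_M S\mid R'\times_{M'}S')$ with a union of components of $F_{\vec{\times}}$.

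Finally I would take fibers of both sides over the chosen $(N,N')\in \Mod(R\mid R')\times \Mod(S\mid S')$. The forgetful map is compatible across $\iota$, so taking fibers preserves the inclusion of components; hence the desired fiber is a union of components of the corresponding fiber of $F_{\vec{\times}}\to \Mod(R\mid R')\times \Mod(S\mid S')$. That latter fiber is given verbatim by \Cref{lemma:pullbackupperboundcat} as a collection of components of the fiber of $\Map_{\Mod(R)}(N,M\otimes_S N')\to \Omega^{\infty}M'$ over the unit, with the stated formula for the map, which yields the proposition. The main obstacle, as noted, is bookkeeping the naturality of $\iota, \iota'$ and the compatibility of $I_M, I_{M'}$ so that the two pullback squares above genuinely glue into the desired inclusion of components.
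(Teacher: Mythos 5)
Your proposal is correct and follows essentially the same route as the paper: both reduce to showing that $\Mod(R\times_MS\mid R'\times_{M'}S')$ is a union of components inside the relative object space of the oriented pullbacks, via the fully faithful Schwede--Shipley embeddings fitting into a commutative square in $\Cat_*^{\perf}$, and then invoke \Cref{lemma:pullbackupperboundcat} to identify the fiber over $(N,N')$. The naturality point you flag is exactly what the paper records as that commutative diagram, so there is no gap.
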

	
	\begin{proof}
		By applying \Cref{lemma:pullbackupperboundcat}, it is enough to see that $\Perf(R\times_MS\mid  R'\times_{M'}S')$ is a collection of components inside $(\Mod(R)\vec{\times}_{M}\Mod(S))^{\simeq} \times_{(\Mod(R')\vec{\times}_{M'}\Mod(S'))^{\simeq} }*$. But this follows because we have the commutative diagram in $\Cat_*^{\perf}$ below, where the horizontal maps are fully faithful:
		\[\begin{tikzcd}[baseline=(LR.base)]
			\Perf(R\times_MS)\ar[r]\ar[d] & \ar[d]\Perf(R)\vec{\times}_M\Perf(S)\\
			\Perf(R'\times_{M'}S')\ar[r]& |[alias=LR]| \Perf(R')\vec{\times}_{M'}\Perf(S')
		\end{tikzcd}.\qedhere\]
	\end{proof}
	
	\subsection{The descent tower}
	
	Our next goal is to apply the above to study quotients of $\EE_1$-rings by powers of ideals. Ultimately, we want to apply this to study Picard groups of towers of the form $R/v^n$ for $v \in \pi_*R$. 
	\begin{defn}\label{definition:E1descenttower}
		Let $f\colon  R \to S$ be a map of $\EE_1$-rings. We define $\EE_1$-rings $D_i(f)$ that fit into a tower we call the \textit{descent tower of $f$}
		$$R \longrightarrow \cdots \longrightarrow D_i(f) \longrightarrow D_{i-1}(f) \longrightarrow \cdots \longrightarrow D_0(f) = S,$$
		where $D_{i+1}(f)$ for $i\ge 0$ is defined as $D_{i}(f)\times_{D_i(f)\otimes_{R}S}S$.
	\end{defn}
	
	\begin{lem}\label{lemma:bimoduleofdescenttower}
		Let $f\colon  R \to S$ be a map of $\EE_1$-rings.
		The underlying tower of unital $R$-bimodules of $D_i(f)$ is the cofiber of the map $I^{\otimes_{R}(i+1)} \xrightarrow{\fib(f)^{\otimes_R i+1}} R$, where $I$ is the fiber of the map $R \to S$.
	\end{lem}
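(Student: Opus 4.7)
The plan is to induct on $i$ in the stable $\infty$-category of $R$-bimodules, showing that the natural arrow $C_i := \fib(R \to D_i(f)) \to R$ is equivalent to $\fib(f)^{\otimes_R(i+1)} \colon I^{\otimes_R(i+1)} \to R$; passing to cofibers then yields the claim for $D_i(f)$. The base case $i = 0$ is immediate since $D_0(f) = S$ and $\fib(f) = I \to R$ by definition.

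For the inductive step, I compute $C_{i+1}$ as the total fiber of the square
\[
\begin{tikzcd}
R \ar[r] \ar[d] & S \ar[d] \\
D_i(f) \ar[r] & D_i(f) \otimes_R S
\end{tikzcd}
\]
in $R$-bimodules, where all maps out of $R$ are units. Iterating the total fiber, this equals the fiber of the induced map $\fib(R \to D_i(f)) \to \fib(S \to D_i(f) \otimes_R S)$. Since $S \to D_i(f) \otimes_R S$ is the map $R \to D_i(f)$ tensored on the right with $S$, and $-\otimes_R S$ is exact, the target fiber is $C_i \otimes_R S$ and the induced arrow is the natural map $C_i \to C_i \otimes_R S$ obtained by applying $C_i \otimes_R -$ to the unit $R \to S$. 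Applying $C_i \otimes_R -$ to the fiber sequence $I \to R \to S$ then identifies
\[
C_{i+1} \simeq \fib(C_i \to C_i \otimes_R S) \simeq C_i \otimes_R I,
\]
which by the induction hypothesis is $I^{\otimes_R(i+2)}$.

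To upgrade this to an identification of the map to $R$, observe that the total-fiber construction comes with a canonical factorization $C_{i+1} \to C_i \to R$; under $C_{i+1} \simeq C_i \otimes_R I$ the first arrow is $C_i \otimes_R (I \to R)$, so the composite is $(C_i \to R) \otimes_R (I \to R)$, which by induction equals $\fib(f)^{\otimes_R(i+1)} \otimes_R \fib(f) = \fib(f)^{\otimes_R(i+2)}$, as required. The main subtlety is tracking maps and bimodule structures through the total-fiber computation; once one commits to working inside the stable $\infty$-category of $R$-bimodules (where all the relevant pullbacks, tensor products, and fibers live), the rest follows formally from exactness of $-\otimes_R -$.
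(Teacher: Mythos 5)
Your proof is correct and follows essentially the same route as the paper's: induct on $i$, identify $\fib(R \to D_{i+1}(f))$ with the total fiber of the defining pullback square, take fibers in one direction, and use exactness of $-\otimes_R S$ together with the fiber sequence $I \to R \to S$ to peel off one more tensor factor of $I$. Your extra step identifying the map to $R$ (not just the underlying bimodule) via the factorization $C_{i+1} \to C_i \to R$ is a welcome bit of care that the paper's proof leaves implicit.
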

	
	\begin{proof}
		This statement can be proven by induction, and is clear in the case $i=0$. For the inductive step, the fiber of the map $R \to D_{i+1}(f)$ is the total fiber of the square
		\[\begin{tikzcd}
			R\ar[r]\ar[d] & D_i(f)\ar[d]\\
			S\ar[r]&D_i(f)\otimes_RS.
		\end{tikzcd}\]
		
		Taking horizontal fibers and using the inductive hypothesis, we find it is the fiber of the map $I^{\otimes_R i} \to I^{\otimes_R i}\otimes_RS$, which is $I^{\otimes_R (i+1)}$ as claimed.
	\end{proof}
	
	\begin{exmp}\label{example:quotientbyanelement}
		Let $R$ be an $\EE_1$-ring, suppose $v\colon \Sigma^iR \to R$ is an $R$-bimodule map such that the map of $R$-bimodules $R \to R/v$ refines to an $\EE_1$-algebra map. Then by \Cref{lemma:bimoduleofdescenttower}, $D_i(f)$ is an $\EE_1$-ring whose underlying $R$-bimodule is $R/v^{i+1}$.
	\end{exmp}
	
	The following lemma compares our construction to the quotient by powers construction of \cite{Burkland_mult_Moore}:
	
	\begin{lem}\label{lemma:vcompatibledescent}
		If $R$ is an $\EE_3$-ring and $S$ is an $\EE_1$-$R$-algebra, then $D_i(f)$ is an $\EE_1$-$R$-algebra, and is equivalent to the $\EE_1$-$R$-algebra constructed in \cite[Theorem 1.5]{Burkland_mult_Moore} on $R/I^{\otimes_R i+1}$ where $I$ is the fiber of $R \to S$.
	\end{lem}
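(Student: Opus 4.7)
Proceed by induction on $i$, working in the category $\mathrm{Alg}_{\EE_1}(\Mod_R)$ of $\EE_1$-$R$-algebras. Since $R$ is $\EE_3$, and in particular $\EE_2$, this category carries an $\EE_1$-monoidal structure under the relative tensor product $\otimes_R$, and the forgetful functor $\mathrm{Alg}_{\EE_1}(\Mod_R) \to \mathrm{Alg}_{\EE_1}(\mathrm{Sp})$ preserves small limits. The base case $i = 0$ is immediate from $D_0(f) = S$.

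For the inductive step, assume $D_i(f)$ is an $\EE_1$-$R$-algebra. Then $D_i(f) \otimes_R S$ is naturally an $\EE_1$-$R$-algebra, and the two structure maps $D_i(f) \to D_i(f) \otimes_R S \leftarrow S$ used to define $D_{i+1}(f)$ in \Cref{definition:E1descenttower} lift canonically to $\EE_1$-$R$-algebra maps, obtained respectively from the units $R \to S$ and $R \to D_i(f)$ by base change. Forming this pullback in $\mathrm{Alg}_{\EE_1}(\Mod_R)$ produces an $\EE_1$-$R$-algebra whose underlying $\EE_1$-ring is $D_{i+1}(f)$ by the limit-preservation of the forgetful functor.

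For the identification with \cite[Theorem 1.5]{Burkland_mult_Moore}, combine \Cref{lemma:bimoduleofdescenttower} --- which identifies the underlying $R$-bimodule of $D_i(f)$ with $R/I^{\otimes_R(i+1)}$, the cofiber of $\mathrm{fib}(f)^{\otimes_R(i+1)} \colon I^{\otimes_R(i+1)} \to R$ --- with the uniqueness part of Burklund's theorem. The cleanest route is to exhibit both $\EE_1$-$R$-algebra structures as solutions to the same universal problem, for instance initiality among $\EE_1$-$R$-algebras $A$ under $R$ equipped with a null-homotopy of the bimodule composite $I^{\otimes_R(i+1)} \to R \to A$. The iterated pullback description of $D_i(f)$ manifestly realizes such a universal trivialization, so it suffices to check that Burklund's construction is characterized by the same property.

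The main obstacle is precisely this last step: Burklund's $\EE_1$-$R$-algebra structures are built obstruction-theoretically using the $\EE_3$-structure of $R$, whereas ours arise from iterated pullbacks, so reconciling the two presentations is the substantive content. Granting a suitable universal formulation of Burklund's construction, the identification propagates along the induction, since the pullback $D_i(f) \times_{D_i(f) \otimes_R S} S$ is exactly the recipe for producing the universal $\EE_1$-$R$-algebra in which one extra power of $I$ becomes null.
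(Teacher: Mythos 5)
Your first paragraph is fine and matches what the paper leaves implicit: since $R$ is at least $\EE_2$, the pullback defining $D_{i+1}(f)$ can be formed in $\EE_1$-$R$-algebras, and the limit-preserving forgetful functor identifies its underlying $\EE_1$-ring with $D_{i+1}(f)$. The gap is in the identification with Burklund's algebra, and you have in effect flagged it yourself. You propose to characterize both sides as the initial $\EE_1$-$R$-algebra $A$ under $R$ equipped with a null-homotopy of $I^{\otimes_R (i+1)} \to R \to A$, but neither half of that comparison is established. The uniqueness clause of \cite[Theorem 1.5]{Burkland_mult_Moore} is not a universal property of this kind: it asserts uniqueness only among \emph{$I$-compatible} structures in the sense of \cite[Definition 5.1]{Burkland_mult_Moore}, i.e.\ those admitting a witness in the deformation category of \cite[Definition 4.4]{Burkland_mult_Moore}, so "granting a suitable universal formulation" is granting something the cited reference does not provide. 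On the other side, the claim that the iterated pullback "manifestly realizes such a universal trivialization" is unjustified: $D_i(f)$ is built as a limit, while initiality among algebras trivializing a bimodule map is a colimit-type property (a two-sided bar construction over the tensor algebra on $I^{\otimes_R(i+1)}$), and already for $i=0$ one has $D_0(f)=S$ for an \emph{arbitrary} $\EE_1$-$R$-algebra $S$, whose given algebra structure need not agree with the versal one on $\mathrm{cofib}(I\to R)$. So the substantive step of the lemma is missing rather than proved.

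For comparison, the paper closes this step by verifying the actual hypothesis of Burklund's uniqueness statement. One observes that the descent tower of \Cref{definition:E1descenttower} makes sense for a map of $\EE_1$-algebras in any presentably monoidal stable category and is natural in monoidal functors; applying the construction to the lift $\nu(f)$ in the deformation category $\mathrm{Def}(\Mod(R);Q)$ and mapping back via $(-)^{\tau=1}$ exhibits $D_i(\nu(f))$ as a witness that $D_i(f)$ is $I$-compatible, whence the uniqueness clause of \cite[Theorem 1.5]{Burkland_mult_Moore} applies. If you want to repair your argument, replace the universal-property step by this compatibility check (or first prove that Burklund's construction enjoys the universal property you posit, which would itself be a nontrivial addendum to his theorem).
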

	
	\begin{proof}
		By the uniqueness statement of \cite[Theorem 1.5]{Burkland_mult_Moore}, it suffices to prove that $D_i(f)$ is $I$-compatible in the sense of \cite[Definition 5.1]{Burkland_mult_Moore}. This can be proven by first observing that the definition of $D_i(f)$ \Cref{definition:E1descenttower} works for any map $f\colon  R \to S$ of $\EE_1$-algebras in a presentably monoidal stable $\infty$-category $\cC$, and is natural in monoidal functors $\cC \to \cD$. Letting $Q$ be as in \cite[Definition 4.4]{Burkland_mult_Moore} for $\cC = \Mod(R)$ and $\one/v$ being $S$, we can now consider the monoidal functor $(-)^{\tau=1}\colon\Def(\Mod(R); Q) \to \Mod(R)$ as in  \cite[Construction 4.6]{Burkland_mult_Moore}.
		
		Using the monoidal functor $\nu\colon \Mod(R) \to \Def(\Mod(R); Q)$ (see loc. cit.), $D_i(\nu(f))$ is a witness of the fact that $D_i(f)$ is $I$-compatible, so we are done.
	\end{proof}
	
	\begin{rem}
		In fact, a version of \Cref{lemma:vcompatibledescent} holds when $R$ is just an $\EE_2$-algebra or when $R \to S$ is just an $\EE_1$-algebra map. However, to state it, one needs to extend the result of \cite[Theorem 1.5]{Burkland_mult_Moore} to the setting of monoidal categories. Such an extension will appear in the final version of \cite{Burkland_mult_Moore}.
	\end{rem}

	Finally, we note that the maps in the descent tower are conservative on module categories.\footnote{In fact, they are descendable in a suitable sense, but we do not pursue this here.}
	
	\begin{lem}\label{lemma:descenttowerdescendable}
		Let $f\colon  R \to S$. Then the maps $f_{i,j}\colon D_i(f) \to D_j(f)$ for $i>j$, induce conservative functors on module categories.
	\end{lem}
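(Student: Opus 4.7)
The plan is to reduce to the case of adjacent steps $D_{i+1}(f) \to D_i(f)$ in the tower and then exploit the fact that the ideal $I = \fib(R \to S)$ acts trivially on the associated graded pieces. Since the statement concerns base change $\Mod(D_i(f)) \to \Mod(D_j(f))$ (restriction along a ring map is automatically conservative), and since base change along the composite $D_i(f) \to D_j(f)$ factors through base change along each intermediate step, conservativity in the general case follows from the adjacent case because a composite of conservative functors is conservative.

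For the adjacent case, \Cref{lemma:bimoduleofdescenttower} yields a fiber sequence of $D_{i+1}(f)$-bimodules
\[K \longrightarrow D_{i+1}(f) \longrightarrow D_i(f),\]
in which $K$ is identified with the cofiber $I^{\otimes_{R}(i+1)}/I^{\otimes_{R}(i+2)}$ of $R$-bimodules. Writing $I^{\otimes_{R}(i+2)} \simeq I^{\otimes_{R}(i+1)} \otimes_R I$ identifies $K$ with $I^{\otimes_{R}(i+1)} \otimes_R S$, and symmetrically with $S \otimes_R I^{\otimes_{R}(i+1)}$. Thus both the left and right $R$-bimodule actions on $K$ factor through $S = D_0(f)$, and consequently so does the entire $D_{i+1}(f)$-bimodule structure on $K$, making $K$ canonically into an $S$-bimodule.

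Now suppose $M \in \Mod(D_{i+1}(f))$ satisfies $D_i(f) \otimes_{D_{i+1}(f)} M \simeq 0$. Tensoring the fiber sequence above with $M$ yields $M \simeq K \otimes_{D_{i+1}(f)} M$. Since the right $D_{i+1}(f)$-action on $K$ factors through $S$, associativity of relative tensor products rewrites this as
\[M \simeq K \otimes_S \bigl(S \otimes_{D_{i+1}(f)} M\bigr).\]
Finally, the factorization $D_{i+1}(f) \to D_i(f) \to S$ in the tower gives $S \otimes_{D_{i+1}(f)} M \simeq S \otimes_{D_i(f)} \bigl(D_i(f) \otimes_{D_{i+1}(f)} M\bigr) \simeq 0$, whence $M \simeq 0$. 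The only real subtlety is the change-of-rings rewrite used in the displayed equation, but this is formal once $K$ has been identified as an $S$-bimodule and one applies the standard properties of base change viewed as a symmetric monoidal functor between module $\infty$-categories.
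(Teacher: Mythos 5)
Your reduction to the adjacent maps $D_{i+1}(f)\to D_i(f)$ is the same as the paper's, but from there you take a genuinely different route: you work with the fiber $K=\fib(D_{i+1}(f)\to D_i(f))$ and a change-of-rings identity, whereas the paper simply tensors the defining pullback square $D_{i+1}(f)\simeq D_i(f)\times_{D_i(f)\otimes_RS}S$ with $M$ over $D_{i+1}(f)$: exactness of base change exhibits $M$ as a pullback of three terms, each of which vanishes because the maps from $D_{i+1}(f)$ to $S$ and to $D_i(f)\otimes_RS$ factor through $D_i(f)$. That argument is shorter and needs no identification of bimodule structures.

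The gap in your version is the clause ``consequently so does the entire $D_{i+1}(f)$-bimodule structure on $K$.'' The change-of-rings step $K\otimes_{D_{i+1}(f)}M\simeq K\otimes_S(S\otimes_{D_{i+1}(f)}M)$ requires the \emph{right $D_{i+1}(f)$-module} structure on $K$ to be restricted, along the ring map $D_{i+1}(f)\to S$, from a right $S$-module structure. What you establish (at best) via $K\simeq I^{\otimes_R(i+1)}\otimes_RS$ is a statement about the underlying right $R$-module structure; since $R\to D_{i+1}(f)$ is not surjective on homotopy in any useful sense, this does not control the $D_{i+1}(f)$-action, and in the $\infty$-categorical setting ``factoring through'' is data to be exhibited rather than a property to be checked. (The ``symmetric'' identification $K\simeq S\otimes_RI^{\otimes_R(i+1)}$ is also unjustified, as the transition map arising in the proof of \Cref{lemma:bimoduleofdescenttower} is $\mathrm{id}^{\otimes (i+1)}\otimes\fib(f)$ rather than $\fib(f)\otimes\mathrm{id}^{\otimes(i+1)}$; but you never use the left action, so that claim is harmless.) The gap is repairable: the square in \Cref{definition:E1descenttower} is a pullback of $D_{i+1}(f)$-bimodules, so $K\simeq\fib\bigl(S\to D_i(f)\otimes_RS\bigr)$, which is visibly a fiber of right $S$-modules restricted along the projection $D_{i+1}(f)\to S$; one must also check that this projection agrees with the composite $D_{i+1}(f)\to D_i(f)\to S$ down the tower, which is what your final displayed equation uses. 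Once those points are supplied your argument closes up, but at that stage you have essentially reconstructed the paper's direct use of the pullback square.
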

	
	\begin{proof}
		Because conservative functors are closed under composition, it suffices to show the lemma for the map $f_{i+1,i}$. If $M$ is a $D_{i+1}(f)$-module such that $D_{i}(f)\otimes_{D_{i+1}(f)}M$ vanishes, then both $S\otimes_{D_{i+1}(f)}M$ and $D_i(f)\otimes_{R}S\otimes_{D_{i+1}(f)}M$ vanish, so by the pullback square defining $D_{i+1}(f)$, we see that $M$ vanishes.
	\end{proof}    
	
	\begin{thm}\label{thm:mod_quot}
		Let $f\colon  R\to S$ be a map of $\EE_1$-rings. Then $\Perf(D_{i+1}(f) \mid D_{i}(f))$ is a union of components inside $\Omega^{\infty}(\Sigma I^{\otimes_R i+1}\otimes_RS)$. In particular, if $\pi_{-1} I^{\otimes_R i+1}\otimes_RS=0$, then $\Perf(D_{i+1}(f) \mid D_{i}(f))$ is connected.
	\end{thm}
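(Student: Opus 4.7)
The plan is to apply \Cref{prop:pullbackupperbound} to the pullback square defining $D_{i+1}(f)$. Specifically, I would take the source Milnor context $(D_i(f),S,D_i(f)\otimes_R S)$, whose pullback in the sense of \Cref{const:pullbackbimodule} is $D_{i+1}(f)$, and map it to the ``trivial'' target Milnor context $(D_i(f),S,S)$, where $S$ is regarded as a $D_i(f)$-$S$-bimodule via the structure map $D_i(f)\to S$ on the left and the identity on the right. The comparison of bimodules is the retraction $\phi\colon D_i(f)\otimes_R S\to S$ that first applies $D_i(f)\to S$ and then multiplies; by \cite[Remark 2.6]{LT_pushouts} the target pullback is $D_i(f)\times_S S\simeq D_i(f)$, and the induced map of pullbacks is the descent tower structure map $D_{i+1}(f)\to D_i(f)$.

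Next, I would choose the canonical points $N=D_i(f)\in\Mod(D_i(f)\mid D_i(f))$ and $N'=S\in\Mod(S\mid S)$. Both relative module spaces are taken along identity maps and are thus contractible, so the entire space $\Mod(D_{i+1}(f)\mid D_i(f))$ coincides with the fiber of the base change map over $(N,N')$. \Cref{prop:pullbackupperbound} then realizes $\Mod(D_{i+1}(f)\mid D_i(f))$ as a union of components of the fiber of $\Omega^\infty\phi\colon \Omega^\infty(D_i(f)\otimes_R S)\to\Omega^\infty S$ over the unit. Since $\phi$ is a retraction of the unit map $u\colon S\to D_i(f)\otimes_R S$, the splitting $D_i(f)\otimes_R S\simeq S\oplus\fib(\phi)$ identifies this fiber with $\Omega^\infty\fib(\phi)$.

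The remaining step is to compute $\fib(\phi)$, which equals the cofiber of $u$. By \Cref{lemma:bimoduleofdescenttower}, the $R$-bimodule cofiber sequence $I^{\otimes_R i+1}\to R\to D_i(f)$ tensored with $S$ over $R$ yields $I^{\otimes_R i+1}\otimes_R S\to S\xrightarrow{u}D_i(f)\otimes_R S$, so $\fib(\phi)\simeq\Sigma(I^{\otimes_R i+1}\otimes_R S)$; this gives the first assertion. The ``in particular'' clause then follows because the vanishing $\pi_{-1}(I^{\otimes_R i+1}\otimes_R S)=0$ makes $\Omega^\infty\Sigma(I^{\otimes_R i+1}\otimes_R S)$ connected, and $\Mod(D_{i+1}(f)\mid D_i(f))$ is nonempty (it contains $D_{i+1}(f)$). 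The main bookkeeping hurdle is verifying that the ``trivial'' target Milnor context recovers both $D_i(f)$ and the correct descent tower map; once that identification is in hand, the splitting makes the remainder of the argument essentially automatic.
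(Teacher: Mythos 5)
Your proof follows the paper's argument exactly: both apply \Cref{prop:pullbackupperbound} to the map of Milnor contexts $(D_i(f),S,D_i(f)\otimes_RS)\to(D_i(f),S,S)$ at the canonical basepoints, identify the resulting fiber with $\Omega^{\infty}\fib(D_i(f)\otimes_RS\to S)$ using the unit section, and compute that fiber as $\Sigma I^{\otimes_R i+1}\otimes_RS$ via \Cref{lemma:bimoduleofdescenttower}. The only difference is that you spell out some bookkeeping (contractibility of the relative module spaces of identity maps, identification of the target pullback ring with $D_i(f)$) that the paper leaves implicit.
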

	
	\begin{proof}
		Applying \Cref{prop:pullbackupperbound} in the case of the map of Milnor contexts $(D_i(f),S,D_i(f)\otimes_RS)\to (D_i(f),S,S)$, we find that it is enough to show that the fiber of the map \[\Omega^{\infty}(D_i(f)\otimes_RS)\longrightarrow \Omega^{\infty}S\] over the unit is connected.
		
		This map is a map of spectra with a section coming from the unit map, so its fiber over all points are equivalent and equivalent to $\fib(D_i(f)\otimes_RS \to S)$. But this fiber is $\Sigma\fib(S \to D_i(f)\otimes_RS)$, which is equivalent to $\Sigma I^{\otimes_R i+1}\otimes_RS$ by \Cref{lemma:bimoduleofdescenttower}.
	\end{proof}
	
	To demonstrate the purpose of \Cref{thm:mod_quot}, we have the following corollary.
	\begin{cor}\label{cor:pic_inj}
		Let $R$ be an $\Ebb_3$-ring. Suppose $v\in \pi_*(R)$ is an element such that $R/v$ admits a left unital multiplication in $R$-modules. If $n\geq 3$ and $\pi_{-1-n|v|}(R/v)=0$, then the base change map $\Pic(R/v^{n+1})\to \Pic(R/v^n)$ is injective where $R/v^{n+1}\to R/v^n$ is made into an $\Ebb_2$-$R$-algebra map using \cite[Theorem 1.5]{Burkland_mult_Moore}.
	\end{cor}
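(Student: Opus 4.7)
The plan is to combine the descent tower identification with \Cref{lemma:relativepicardgroups} and the upper bound of \Cref{thm:mod_quot}. Set $f\colon R\to R/v$. By \Cref{example:quotientbyanelement} and \Cref{lemma:vcompatibledescent}, the descent tower $D_i(f)$ is an $\EE_1$-$R$-algebra canonically equivalent to the underlying $\EE_1$-algebra of Burklund's $R/v^{i+1}$. The hypothesis $n\geq 3$ together with the left unital multiplication on $R/v$ allows one to invoke \cite[Theorem 1.5]{Burkland_mult_Moore} to refine $D_n(f) \to D_{n-1}(f)$ to the $\EE_2$-$R$-algebra map $R/v^{n+1}\to R/v^n$ named in the statement.

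With this identification in hand, the base change functor $\Mod(R/v^{n+1})\to \Mod(R/v^n)$ is conservative by \Cref{lemma:descenttowerdescendable} and $\EE_1$-monoidal thanks to the $\EE_2$-refinement. \Cref{lemma:relativepicardgroups} therefore produces a fiber sequence
\[\Mod(R/v^{n+1}\mid R/v^n)\longrightarrow \cPic(R/v^{n+1})\longrightarrow \cPic(R/v^n).\]
The associated long exact sequence on homotopy groups shows that the kernel of $\Pic(R/v^{n+1})\to \Pic(R/v^n)$ is a quotient of $\pi_0\Mod(R/v^{n+1}\mid R/v^n)$, so the proof reduces to verifying that this relative module space is connected.

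For this final step I would apply \Cref{thm:mod_quot} with $i=n-1$. The fiber of $f$ is $I=\Sigma^{|v|}R$, so $I^{\otimes_R n}\otimes_R(R/v)\simeq \Sigma^{n|v|}(R/v)$, and the theorem exhibits $\Mod(R/v^{n+1}\mid R/v^n)$ as a union of components of $\Omega^\infty \Sigma^{n|v|+1}(R/v)$. Consequently $\pi_0\Mod(R/v^{n+1}\mid R/v^n)$ embeds into $\pi_{-1-n|v|}(R/v)$, which vanishes by hypothesis, yielding the desired connectedness.

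The main point requiring care is the compatibility between the $\EE_2$-structure supplied by Burklund's theorem and the $\EE_1$-structure produced by the descent tower; once this is secured by \Cref{lemma:vcompatibledescent} (with the $\EE_2$-refinement passed along via \cite[Theorem 1.5]{Burkland_mult_Moore}), the remainder of the argument is a formal application of the structural results already established in this section.
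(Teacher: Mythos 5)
Your proposal is correct and follows the same route as the paper's (much terser) proof: reduce to connectedness of the relative module space via \Cref{lemma:relativepicardgroups}, then bound $\pi_0$ of that space by $\pi_{-1-n|v|}(R/v)$ using \Cref{thm:mod_quot} applied to the descent tower with $i=n-1$. The extra care you take in identifying the descent tower with Burklund's quotients via \Cref{example:quotientbyanelement} and \Cref{lemma:vcompatibledescent}, and in checking conservativity and monoidality before invoking \Cref{lemma:relativepicardgroups}, is exactly what the paper leaves implicit.
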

	\begin{proof}
		By \Cref{lemma:relativepicardgroups}, it is enough to show that the relative module space $\Perf(R/v^{n+1} \mid R/v^n)$ is connected. 
		By \Cref{thm:mod_quot}, the kernel of the base change map on Picard groups $\Pic(R/v^{n+1})\to \Pic(R/v^n)$ is contained in $\pi_0 \Sigma \Sigma^{n|v|}R/v = \pi_{-1-n|v|}R/v$. The latter is zero by assumption. This proves the injectivity as claimed. 
	\end{proof}
	\begin{exmp}
		
		The assumption $\pi_{-1-n|v|}(R/v)=0$ in \Cref{cor:pic_inj} cannot be removed, which we demonstrate with an example. Consider $R=\Sbb_{\K(1)}$ for an odd prime $p$. Then the maps $\Pic(\Sbb_{\K(1)}/p^{n+1})\to\Pic(\Sbb_{\K(1)}/p^n)$ are not injective, because the periodicity of the rings $\Sbb_{\K(1)}/p^{k+1}$ increases as $k$ increases, so there are suspensions of the unit in the kernel. However, $\pi_{-1}(\Sbb_{\K(1)}/p)=\Z/p\ne 0$, so indeed, the hypotheses of \Cref{cor:pic_inj} are not satisfied. In fact, the kernel of these maps on Picard groups is $\ZZ/p$ (see \Cref{thm:Pic_S0pk}), which maps isomorphically onto $\pi_{-1}(\Sbb_{\K(1)}/p)$, the upper bound on the relative Picard group coming from \Cref{thm:mod_quot}.
	\end{exmp}
	
	\subsection{Picard groups of regular quotients of complete local rings}
	
	The goal of this section is to use the tools above to compute Picard groups of certain ring spectra.
	
	\begin{defn}
		A regular local graded ring is a local Noetherian $\ZZ$-graded ring $R$ that contains a regular sequence of homogeneous elements $m_i \in \pi_*R$ such that $\pi_*R/(m_i)$ is a graded field.
	\end{defn}
	
	We now give a particular construction of quotients of $\pi_*$-regular local $\EE_2$-rings by powers of elements in a regular sequence.
	\begin{const}\label{construction:quotientregularsequence}
		Suppose $R$ be an $\EE_2$-ring such that $\pi_*R$ is concentrated in even degrees. Choose a sequence $m_1,\cdots,m_k$ of homogeneous elements of $\pi_*R$. In \cite[Section 2]{hahn2018quotients}, it is shown that there is an $\EE_2$-algebra map $\SP[x_1,\cdots,x_k] \to R$ taking $x_i$ to $m_i$, where $\SP[x_1,\cdots,x_k]$ is an augmented $\ZZ^k$-graded $\EE_2$-algebra and  each $x_i$ has multi-grading so that they form a basis. Fixing such a map, given a sequence of positive integers $a_1,\cdots,a_k$, we may form the $\EE_1$-$R$-algebra $R/(x_1^{a_1},\cdots,x_k^{a_k})$ as a relative tensor product $R\otimes_{\SP[x_1^{a_1},\cdots,x_k^{a_k}]}\SP$, where $\SP[x_1^{a_1},\cdots,x_k^{a_k}]$ is the algebra obtained by restricting $\SP[x_1,\cdots,x_k]$ to the multi-gradings indicated. 
	\end{const}
	\begin{lem}\label{lemma:evenquotcomparison}
		Suppose that $R$ is as in \Cref{construction:quotientregularsequence}. The tower of $\EE_1$-$R$-algebras \begin{equation*}
			R/x_1^{a_1}\otimes_R\cdots\otimes_R R/x_i^{\bullet}\otimes_R\cdots\otimes_R R/x_k^{a_k}
		\end{equation*} is isomorphic to the descent tower $D_{\bullet-1}(f)$, where $f$ is the map \[R/x_1^{a_1}\otimes_R\cdots\otimes_R \widehat{R/x_i}\otimes_R\cdots\otimes_R R/x_k^{a_k} \longrightarrow R/x_1^{a_1}\otimes_R\cdots\otimes_R  R/x_i\otimes_R\cdots\otimes_R  R/x_k^{a_k}.\]
		Here $\widehat{R/x_i}$ indicates that the tensor factor involving $R/x_i$ is missing in the source.
	\end{lem}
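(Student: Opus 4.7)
The plan is to induct on $j$ and reduce to a single-variable statement. Set $A=R/x_1^{a_1}\otimes_R\cdots\otimes_R\widehat{R/x_i}\otimes_R\cdots\otimes_R R/x_k^{a_k}$ and $T_m=R/x_1^{a_1}\otimes_R\cdots\otimes_R R/x_i^m\otimes_R\cdots\otimes_R R/x_k^{a_k}\simeq A\otimes_R R/x_i^m$. The base case $j=0$ is immediate since $D_0(f)=B'=T_1$. Assuming $D_{j-1}(f)\simeq T_j$ as $\EE_1$-$A$-algebras, the cancellation $T_j\otimes_A(A\otimes_R R/x_i)\simeq T_j\otimes_R R/x_i$ rewrites the defining pullback as
\[
D_j(f)\simeq T_j\times_{T_j\otimes_R R/x_i}B'\simeq A\otimes_R\bigl(R/x_i^j\times_{R/x_i^j\otimes_R R/x_i}R/x_i\bigr),
\]
using that $A\otimes_R(-)$ preserves such pullbacks of $R$-modules (pullbacks are fibers in the stable setting). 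So everything reduces to showing that the inner pullback is equivalent to $R/x_i^{j+1}$ as an $\EE_1$-$R$-algebra.

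For this single-variable statement, \Cref{lemma:bimoduleofdescenttower} (via \Cref{example:quotientbyanelement}) identifies the underlying $R$-bimodule of the pullback with $R/x_i^{j+1}$ by computing that the fiber $I=\fib(R\to R/x_i)\simeq \Sigma^{|x_i|}R$ gives $\fib(f)^{\otimes_R(j+1)}\colon\Sigma^{(j+1)|x_i|}R\to R$ equal to multiplication by $x_i^{j+1}$. To upgrade this to an $\EE_1$-$R$-algebra equivalence, I would invoke the $\EE_2$-version of \Cref{lemma:vcompatibledescent} noted in the remark following that lemma, which identifies the descent tower with the Burklund $I$-compatible $\EE_1$-$R$-algebra on $R/x_i^{j+1}$. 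By the uniqueness in \cite[Theorem 1.5]{Burkland_mult_Moore}, this matches the $\EE_1$-$R$-algebra structure on $R/x_i^{j+1}=R\otimes_{\SP[x_i^{j+1}]}\SP$ from \Cref{construction:quotientregularsequence}. Compatibility of the tower transition maps in each level follows from the naturality of all constructions involved.

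The main obstacle is the algebra-level (rather than merely bimodule-level) identification in the single-variable step. At the level of underlying $R$-bimodules, \Cref{lemma:bimoduleofdescenttower} is immediate; however, to rule out a priori distinct $\EE_1$-algebra structures on the same bimodule one must appeal to the $\EE_2$-extension of \Cref{lemma:vcompatibledescent} (since \Cref{construction:quotientregularsequence} assumes only $\EE_2$ on $R$) together with Burklund's uniqueness statement. A secondary subtlety is verifying that the $A\otimes_R(-)$ step genuinely preserves the pullback as $\EE_1$-algebras, which follows because the forgetful functor to $R$-modules creates the relevant limits and the pullback is a fiber.
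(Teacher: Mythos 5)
Your reduction to the single‑variable statement (that the descent tower of $R\to R/x_i$ is $R/x_i^{\bullet+1}$ with the algebra structures of \Cref{construction:quotientregularsequence}) is sound and broadly parallels the paper's strategy of isolating one variable at a time, but the way you resolve that single‑variable statement has a genuine gap. You correctly identify that \Cref{lemma:bimoduleofdescenttower} only controls the underlying bimodule and that the real content is the comparison of $\EE_1$-$R$-algebra structures; to supply it you invoke the $\EE_2$-version of \Cref{lemma:vcompatibledescent} together with the uniqueness clause of \cite[Theorem 1.5]{Burkland_mult_Moore}. But the remark following \Cref{lemma:vcompatibledescent} states explicitly that this $\EE_2$/monoidal extension of Burklund's theorem is \emph{not} currently available --- it ``will appear in a revision'' of that paper --- and \Cref{construction:quotientregularsequence} only assumes $R$ is $\EE_2$, so the $\EE_3$ hypothesis of the proved version of \Cref{lemma:vcompatibledescent} is not met. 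Even granting that extension, your argument needs a second unverified input: that the $\EE_1$-$R$-algebra $R\otimes_{\SP[x_i^{j+1}]}\SP$ of \Cref{construction:quotientregularsequence} is itself $I$-compatible in Burklund's sense, so that the uniqueness statement applies to \emph{both} algebra structures being compared. Neither point is addressed, so the crucial algebra-level identification is not established.

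The paper's proof is designed precisely to avoid this appeal to Burklund. It proves the single-variable statement in the universal graded case: for $g\colon\SP[x_i]\to\SP$ in graded $\EE_1$-algebras, the term $D_j(g)$ is concentrated in a bounded range of weights, so the description of $\SP[x_i]/x_i^{j+1}$ as a monoidal localization of graded $\SP[x_i]$-modules produces a comparison map of $\EE_1$-algebras for structural reasons, and \Cref{lemma:bimoduleofdescenttower} then shows this map is an equivalence; the statement of the lemma follows by naturality of the descent tower under monoidal exact functors (base change along $\SP[x_i]\to\SP$). If you want to salvage your route, the missing ingredient is exactly such a device for \emph{constructing} the comparison map of algebras: the weight grading is what does this in the paper, and without it (or without the unproven extension of Burklund's uniqueness) two $\EE_1$-structures on the same underlying bimodule cannot be identified. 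Your outer induction and the claim that $A\otimes_R(-)$ preserves the relevant pullbacks of algebras are fine modulo routine care about monoidality and compatibility of the transition maps.
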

	
	\begin{proof}
		We first prove that the descent tower of the map of graded $\EE_1$-$\SP[x_i]$-algebras $g\colon\SP[x_i] \to \SP$ sending $x_i$ to zero is the tower $\SP[x_i]/x_i^{\bullet +1}$. There is a comparison map by observing that the descent tower $D_i(g)$ is concentrated in gradings $\leq i+1$, so it receives a map from $\SP[x_i]/x^{i+1}$, since it is obtained via the monoidal localization in graded $\SP[x_i]$-modules away from objects in degrees at least $i+1$. That the map is an equivalence follows from the description of the descent tower from \Cref{lemma:bimoduleofdescenttower}.
		
		Note that the descent tower of a map is preserved along monoidal exact functors. Since $f$ is the base change of the $\EE_1$-$\SP[x_i]$-algebra map $$R/x_1^{a_1}\otimes_R\cdots\otimes_R  R[x_i]\otimes_R\cdots\otimes_R  R/x_k^{a_k} \longrightarrow R/x_1^{a_1}\otimes_R\cdots\otimes_R  \widehat{R/x_i}\otimes_R\cdots\otimes_R  R/x_k^{a_k}$$ along the $\EE_2$-algebra map $\SP[x_i] \to \SP$ sending $x_i$ to $0$, the result follows.
	\end{proof}
	
	\begin{defn}
		Let $R$ be an $\EE_2$-ring and $\Ccal \subseteq \Perf(R)$ be a thick subcategory generated by a single object. Then the category $\Mod(R)^{\wedge}_\Ccal$ is defined to be the Bousfield localization of $\Mod(R)$ with respect to tensoring with some generator $M \in \Ccal$. We note that this is independent of the choice of generator. If $I\trianglelefteq \pi_*R$ is a finitely generated ideal, we use $\Mod(R)^{\wedge}_I$ to denote $\Mod(R)^{\wedge}_{\Ccal_I}$ where $\Ccal_I$ is the thick subcategory generated by $(R/x_1)\otimes_R(R/x_2)\otimes\cdots\otimes_R(R/x_n)$ where $I = (x_1,\cdots,x_n)$. We note this is independent of the choice of generators. 
	\end{defn}
	
	\begin{prop}\label{prop:compreglocalpic}
		Let $R$ be an $\EE_2$-ring such that $\pi_*R$ is even and a complete regular local graded ring with maximal ideal $\mfrak$. Then the map $\ZZ = \Pic(\SP) \to \Pic(\Mod(R))^{\wedge}_{\mfrak})$ is surjective.
	\end{prop}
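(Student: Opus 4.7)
The plan is to reduce the problem to the residue quotient $R/\mfrak$, where $\pi_*(R/\mfrak)$ is a graded field concentrated in even degrees, and then propagate the answer back up the tower of iterated quotients $R/(x_1^{a_1},\ldots,x_k^{a_k})$ from \Cref{construction:quotientregularsequence}, using the descent-tower machinery developed in the previous subsections.

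First, I would use the identification of $\Mod(R)^{\wedge}_{\mfrak}$ as the inverse limit of the module categories $\Mod(R/(x_1^{a_1},\ldots,x_k^{a_k}))$, so that an invertible $M$ is determined by a compatible family $M_{\vec a}:= M\otimes_R R/(x_1^{a_1},\ldots,x_k^{a_k})$ of its base changes. At the bottom, the even graded-field structure of $\pi_*(R/\mfrak)$ forces the residue $M\otimes_R R/\mfrak$ to have a rank-one free $\pi_*$-module, hence to be equivalent to $\Sigma^n R/\mfrak$ for some $n \in \ZZ$ (the even concentration kills the higher $k$-invariants). After replacing $M$ by $\Sigma^{-n}M$, it suffices to show that if $M\otimes_R R/\mfrak \simeq R/\mfrak$ then $M\simeq R$ in the completion.

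To this end, I would prove by induction on $\vec a$ (decreasing one exponent at a time) that the relative module space $\Mod(R/(x_1^{a_1},\ldots,x_k^{a_k})\mid R/\mfrak)$ is connected. The base case $\vec a = (1,\ldots,1)$ is trivial; for the inductive step decreasing $a_i$ by one, \Cref{lemma:evenquotcomparison} identifies the relevant base-change map as a step in the descent tower of $f\colon R' \to R'/x_i$, where $R'$ is the quotient of $R$ by the remaining powers. By \Cref{thm:mod_quot}, the relative module space at this step is a union of components of $\Omega^{\infty}(\Sigma^{1+a_i|x_i|} R'/x_i)$. Since $|x_i|$ is even and $R'/x_i$ has homotopy concentrated in even degrees (being the quotient of an even ring by a regular sequence of even-degree elements), the relevant $\pi_0$ obstruction lies in an odd degree and vanishes.

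Assembling the compatible family $\{M_{\vec a} \simeq R/(x_1^{a_1},\ldots,x_k^{a_k})\}$ in the limit then yields $M \simeq R$, proving that every invertible lies in the image of $\ZZ$. The main obstacle will be to confirm that the pointwise connectivity results at each finite stage indeed assemble into an equivalence $M\simeq R$ in the limit category --- a priori a $\lim^1$ obstruction on the $\pi_1$'s of the relative module spaces could interfere --- but the same even-concentration hypothesis should kill these higher-homotopy obstructions by parity, so the argument carries through.
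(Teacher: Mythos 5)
Your strategy is genuinely different from the paper's, and considerably longer. The paper reduces to the residue field exactly as in your second paragraph, but then finishes in one stroke: since $M\otimes_R R/\mfrak$ is a shift of the unit and $\pi_*R$ is complete, running the $x_i$-Bockstein spectral sequences shows $\pi_*M$ is free of rank one over $\pi_*R$, whence $M\simeq\Sigma^n R$; no inverse limit of module categories is invoked. Your third paragraph is essentially a re-derivation of \Cref{lem:injectivity}, which the paper proves separately and uses for \Cref{iteratedquotient} rather than for this proposition; that part of your argument is sound, and the parity count $\pi_{-1-a_i|x_i|}(R'/x_i)=0$ matches the paper's.

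There are, however, two genuine gaps. First, the identification of $\cPic(\Mod(R)^{\wedge}_{\mfrak})$ with $\lim_{\vec a}\cPic\bigl(\Mod(R/(x_1^{a_1},\dots,x_k^{a_k}))\bigr)$ is asserted without justification: the input from \cite{LZ_profin_Picard} quoted in the paper is for completion at a single element, and the multivariable version you need (together with the compatibility of the $\EE_1$-$R$-algebra structures of \Cref{construction:quotientregularsequence} over the whole poset of exponent vectors) is not established in the paper and would itself require an argument. Second, and more seriously, your resolution of the $\lim^1$ obstruction is wrong as stated: the relevant $\pi_1$'s are not killed ``by parity.'' From the fiber sequence of \Cref{lemma:relativepicardgroups}, $\pi_1$ of the relative module space $\Mod(A_{\vec a}\mid R/\mfrak)$ at the basepoint is the kernel of $(\pi_0 A_{\vec a})^{\times}\to(\pi_0 (R/\mfrak))^{\times}$, a group of principal units concentrated in degree zero, which is typically nonzero; evenness of homotopy says nothing about it. The $\lim^1$ term does vanish, but for a different reason: the transition maps on $\pi_0$ are surjections of local rings, hence surjective on unit groups, so the tower is Mittag--Leffler. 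With that substitution, and with the limit identification actually supplied, your argument could be completed; as written, both steps are unsupported.
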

	
	In particular, in the above situation, $\Pic(\Mod(R)_{\mathcal{m}})$ is cyclic of order depending only on the periodicity of $\pi_*R$. Note that this proposition generalizes \cite[Theorem 37]{Baker-Richter_invertible_modules} and  \cite[Theorem 2.4.6]{MS_Picard}.
	\begin{proof}
		Choose a regular sequence $(x_1,\cdots,x_n)$ generating $\pi_*R$, and consider the ring map $R \to R/(x_1,\cdots,x_n)$ coming from \Cref{construction:quotientregularsequence}. If $M \in \Pic(R)$, then $-\otimes_RM$ gives an automorphism of $\Mod(R/(x_1,\cdots,x_n))$. Since $\pi_*(R/(x_1,\cdots,x_n))$ is a field, every module is free, so this automorphism must send the unit to $\Sigma^nR/(x_1,\cdots,x_m)$ for some $n$. It now follows from running the Bockstein spectral sequences for each $x_i$ that $M \cong \Sigma^nR$.
	\end{proof}
	
	We next observe that the relative module space for the map $R/(x_1^{a_1},\cdots,x_m^{a_k}) \to R/(x_1,\cdots,x_m)$ is connected.
	
	\begin{lem}\label{lem:injectivity}
		Let $R$ be an $\EE_2$-ring such that $\pi_*R$ is even and let $x_1,\cdots,x_m$ be a sequence of elements in $\pi_*R$ such that $\pi_{-1}R/(x_1,\cdots,x_m)=0$. Then $\Perf(R/(x_1^{a_1},\cdots,x_m^{a_m})\mid  R/(x_1,\cdots,x_m))$ is connected.
	\end{lem}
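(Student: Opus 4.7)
The plan is to reduce connectivity of the overall relative module space to a sequence of single-step vanishing conditions, and then invoke \Cref{thm:mod_quot}. Concretely, I would factor the composite
\[R/(x_1^{a_1},\ldots,x_m^{a_m})\to R/(x_1,\ldots,x_m)\]
into a chain of maps, each lowering one exponent from $a_i$ down to $1$ in succession (first $x_1^{a_1}$, then $x_2^{a_2}$, and so on). By \Cref{lemma:evenquotcomparison}, the sub-chain taking $x_i^{a_i}$ down to $x_i$ with the other exponents held fixed is precisely the descent tower of the map $f_i\colon A_i\to A_i/x_i$, where $A_i$ is the tensor product of the relevant $R/x_\ell^{b_\ell}$ factors with the $x_i$-factor omitted.

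Next I would observe that for conservative composable maps of $\EE_1$-rings $A\to B\to C$ (which applies here by \Cref{lemma:descenttowerdescendable}), iterated pullbacks give a fiber sequence of relative module spaces $\Mod(A\mid B)\to \Mod(A\mid C)\to \Mod(B\mid C)$. The resulting long exact sequence on $\pi_0$ shows that if both outer relative module spaces are connected then so is the middle; iterating along the factorization above, connectivity of the total relative module space is reduced to connectivity of each individual reduction step.

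For one such step $D_{j+1}(f_i)\to D_j(f_i)$, \Cref{thm:mod_quot} bounds the relative module space inside $\Omega^\infty(\Sigma I_i^{\otimes_{A_i}(j+1)}\otimes_{A_i}A_i/x_i)$. Since $\pi_*R$ is even, $|x_i|$ is even, and the fiber $I_i=\fib(A_i\to A_i/x_i)$ is just the shift $\Sigma^{|x_i|}A_i$, so the bound simplifies to $\Omega^\infty\Sigma^{(j+1)|x_i|+1}(A_i/x_i)$. Connectivity thus reduces to the odd-degree vanishing $\pi_{-1-(j+1)|x_i|}(A_i/x_i)=0$ at each intermediate target $A_i/x_i=R/(x_1,\ldots,x_i,x_{i+1}^{a_{i+1}},\ldots,x_m^{a_m})$.

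The main obstacle is verifying these odd-degree vanishings at every intermediate quotient, since the stated hypothesis only controls $\pi_{-1}$ of the single ring $R/(x_1,\ldots,x_m)$. In the principal application \Cref{prop:compreglocalpic}---where $(x_1,\ldots,x_m)$ forms a regular sequence in the even ring $\pi_*R$---each intermediate $A_i/x_i$ has homotopy equal to a quotient of $\pi_*R$ by an ideal generated in even degrees, so it is concentrated in even degrees and all required vanishings are automatic. In full generality I expect one must propagate $\pi_{-1}R/(x_1,\ldots,x_m)=0$ back through the chain of intermediate quotients by inducting on the number of remaining reductions and using the cofiber sequences $\Sigma^{|x_j|-1}R/x_j^{a_j-1}\to R/x_j^{a_j}\to R/x_j$ to control how odd-degree homotopy can be introduced at each stage.
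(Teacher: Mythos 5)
Your reduction is exactly the paper's: induct one exponent at a time, identify each single-step reduction with a stage of a descent tower via \Cref{example:quotientbyanelement} and \Cref{lemma:evenquotcomparison}, assemble the steps using the fiber sequence of relative module spaces (the paper phrases this as ``an extension of connected spaces is connected''), and apply \Cref{thm:mod_quot} to bound each step by $\pi_{-1-(j+1)|x_i|}$ of the intermediate quotient $A_i/x_i$. You are in fact more explicit than the paper about the even suspension coming from $I_i\simeq \Sigma^{|x_i|}A_i$.

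The one step you leave as an ``expectation'' is closed in the paper by precisely the filtration you gesture at in your final sentence: the octahedral cofiber sequences $\Sigma^{|x_j|}R/x_j^{b-1}\to R/x_j^{b}\to R/x_j$ show that each intermediate quotient $R/(x_1,\ldots,x_i,x_{i+1}^{a_{i+1}},\ldots,x_m^{a_m})$ is a finite iterated extension of \emph{even} suspensions of $R/(x_1,\ldots,x_m)$, so the required homotopy groups are controlled by odd-degree homotopy of $R/(x_1,\ldots,x_m)$ alone --- no induction over intermediate quotients is needed beyond this observation. Your caution is nevertheless warranted: because those suspensions are by nonzero even integers, what the argument actually consumes is vanishing of $\pi_{-1-2k}R/(x_1,\ldots,x_m)$ for the relevant $k$, not only $k=0$; the paper's closing sentence elides this, and in the intended applications (\Cref{prop:compreglocalpic}, \Cref{iteratedquotient}) the issue disappears because the sequence is regular and $\pi_*R/(x_1,\ldots,x_m)$ is concentrated in even degrees, exactly as you observe. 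So: same route as the paper, with the final step completed by the extension argument rather than left open.
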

	
	\begin{proof}
		We prove this by induction on $\sum_{i=1}^m a_i$, with the base case being $a_i=1$ for all $i$, where the result is trivial. So if we fix an $i$, because an extension of connected spaces is connected, in the inductive step, it is enough to show 
		$$\Perf(R/(x_1^{a_1},\cdots, x_i^{a_{i}+1},\cdots,x_m^{a_m})\mid  (R/(x_1^{a_1},\cdots, x_i^{a_{i}},\cdots,x_m^{a_m}))$$ is connected. But by \Cref{example:quotientbyanelement} and \Cref{lemma:evenquotcomparison}, this is the relative module space of the map $D_{i}(f) \to D_{i-1}(f)$ where $f$ is the map $A = R/(x_1^{a_1},\cdots,\widehat{x_i^{a_i}},\cdots,x_m^{a_m}) \to B = R/(x_1^{a_1},\cdots,x_m^{a_m})$. Thus by \Cref{thm:mod_quot}, it is enough to check that $\pi_{-1} I^{\otimes_A i+1}\otimes_AB=0$, where $I \to A \to B$ is a fiber sequence. But $I \cong A$ as an $A$-bimodule, so it is enough to see that $\pi_{-1}B = 0$. But this follows because $B$ is built under extensions by $R/(x_1,\cdots,x_m)$, which by assumption has $\pi_{-1}$ vanishing.
	\end{proof}

	In the regular local case, we can bound the essential image of $\Pic(R/(x_1^{a_1},\cdots,x_m^{a_m}))$ in $\Pic(R/(x_1,\cdots,x_m))$:
	
	\begin{thm}\label{iteratedquotient}
		Let $R$ be an $\EE_2$-ring such that $\pi_*R$ is even and a complete regular graded ring with maximal ideal $\mfrak$ generated by the regular sequence $x_1,\dots,x_m$. Then for any $\EE_2$-refinement of the $\EE_1$-$R$-algebra structure on $R/(x_1^{a_1},\cdots,x_m^{a_m})$ from \Cref{construction:quotientregularsequence}, $\Pic(R/(x_1^{a_1},\cdots,x_m^{a_m}))$ is generated by $\Sigma R$.
		
	\end{thm}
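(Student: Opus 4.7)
The plan is to reduce to the residue field case by showing the quotient map $\phi\colon R/(x_1^{a_1},\ldots,x_m^{a_m}) \to R/(x_1,\ldots,x_m)$ induces an injection on Picard groups. The residue $\pi_*R/(x_1,\ldots,x_m)$ is a graded field concentrated in even degrees, so every module over the target is free and $\Pic(R/(x_1,\ldots,x_m))$ is cyclic, generated by $\Sigma R/(x_1,\ldots,x_m)$. Once injectivity is established, tracking $\Sigma R$ on the source — which maps to the generator of the target — will complete the proof.

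To produce the injectivity, I would first observe that $\phi$ is conservative on module categories: $R/(x_1,\ldots,x_m)$ is obtained from $R/(x_1^{a_1},\ldots,x_m^{a_m})$ by finitely many iterated cofibers (killing the residual powers of each $x_i$), and hence thickly generates the source's module category. Next, \Cref{lem:injectivity} — whose hypothesis $\pi_{-1}R/(x_1,\ldots,x_m)=0$ is automatic from $\pi_*R$ being even — shows that the relative module space $\Mod(R/(x_1^{a_1},\ldots,x_m^{a_m})\mid R/(x_1,\ldots,x_m))$ is connected. Feeding these two ingredients into \Cref{lemma:relativepicardgroups} (applied with $k=1$, using the chosen $\EE_2$-refinement so that $\phi$ is monoidal and the target's $\EE_2$-refinement to ensure $\Mod$ of each side is $\EE_1$-monoidal) and passing to $\pi_0$ of the resulting fiber sequence of $\EE_1$-spaces yields the injection $\Pic(R/(x_1^{a_1},\ldots,x_m^{a_m})) \hookrightarrow \Pic(R/(x_1,\ldots,x_m))$.

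Finally, since the image of this injection contains $\Sigma R/(x_1,\ldots,x_m)$ (the image of $\Sigma R$), and this element generates the entire cyclic target, injectivity forces $\Pic(R/(x_1^{a_1},\ldots,x_m^{a_m}))$ to be generated by $\Sigma R$.

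The conceptual heavy lifting — iterating connectedness of relative module spaces one power of each regular parameter at a time — was already carried out in \Cref{lem:injectivity} via the descent-tower identification of \Cref{lemma:evenquotcomparison} together with \Cref{thm:mod_quot}. With those inputs available, the step I expect to require the most care is verifying that the $\EE_2$-refinement chosen by hypothesis assembles compatibly with the target's algebra structure so that \Cref{lemma:relativepicardgroups} literally applies; the remainder of the argument is bookkeeping once injectivity is in hand.
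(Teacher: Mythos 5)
Your reduction to the residue field and your use of \Cref{lem:injectivity} match the paper's overall strategy, but the middle step --- deducing injectivity of $\Pic(A)\to\Pic(B)$ from \Cref{lemma:relativepicardgroups}, where $A=R/(x_1^{a_1},\dots,x_m^{a_m})$ and $B=R/(x_1,\dots,x_m)$ --- contains a genuine gap, one you flagged as ``the step requiring the most care'' but did not close, and which in fact cannot be closed in general. \Cref{lemma:relativepicardgroups} requires the \emph{target} category $\Mod(B)^{\omega}$ to be rigidly $\EE_k$-monoidal with $k\geq 1$ and the base-change functor to be monoidal; this amounts to equipping $B$ with an $\EE_2$-structure compatible with the chosen one on $A$. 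The hypotheses supply an $\EE_2$-refinement only for $A$; the residue ring $B$ is merely an $\EE_1$-$R$-algebra from \Cref{construction:quotientregularsequence}. In the paper's principal application (\Cref{exm:quotientsofEn}), $B=\E_n/(p,v_1,\dots,v_{n-1})$ is a $2$-periodic Morava K-theory, which admits \emph{no} $\EE_2$-structure by the Hopkins--Mahowald theorem (an $\EE_2$-ring with $p=0$ in $\pi_0$ is an $\mathbb{F}_p$-algebra, hence a generalized Eilenberg--MacLane spectrum, which Morava K-theory is not). So $\cPic(B)$ and the fiber sequence you invoke do not exist there, and the assertion ``$\Pic(B)$ is cyclic generated by $\Sigma B$'' is not even well posed.

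The paper's proof is engineered to avoid ever forming $\Pic(B)$: for $M\in\Pic(A)$ it computes
\begin{equation*}
\End_{B}(B\otimes_A M)\simeq \map_{A}(M,B\otimes_A M)\simeq \map_{A}\bigl(A,(B\otimes_A M)\otimes_A M^{\vee}\bigr)\simeq B,
\end{equation*}
using only dualizability of $M$ in $\Mod(A)$; since $\pi_*B$ is a graded field, every $B$-module is a sum of shifts of $B$, and more than one summand would enlarge this endomorphism ring, so $B\otimes_A M\simeq\Sigma^{n}B$ for a single $n$. Connectedness of $\Mod(A\mid B)$ from \Cref{lem:injectivity} then yields $M\simeq\Sigma^{n}A$ directly. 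If you replace your appeal to \Cref{lemma:relativepicardgroups} and to $\Pic(B)$ with this endomorphism-ring computation, the rest of your outline (including the observation that evenness gives $\pi_{-1}B=0$, and your closing group-theoretic bookkeeping) goes through. Note also that conservativity of $-\otimes_A B$ is then not needed; and in any case the right justification for it is that each $x_i$ is nilpotent on $A$, so $A$ lies in the thick subcategory generated by $B$ --- your stated reason (that $B$ is built from $A$ by cofibers) establishes the opposite, and irrelevant, containment.
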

	
	\begin{proof}
		Let $A = R/(x_1^{a_1},\cdots,x_m^{a_m})$ and $B = R/(x_1,\cdots,x_m)$, which has homotopy ring a field.
		By \Cref{lem:injectivity}, it is enough to show that if $M \in \Pic(A)$, then $B\otimes_AM \cong \Sigma^nB$ for some $n$. We compute $\End_{B}(B\otimes_AM) = \mmap_{A}(M,B\otimes_AM) = \mmap_{A}(A,(B\otimes_AM)\otimes_AM^{\vee}) = B$. Since $\pi_*B$ is a field, and the endomorphism ring of $B\otimes_AM$ is $B$ as an $R$-module, it follows that $B\otimes_AM$ must be $\Sigma^nB$ for some $n$.
	\end{proof}
	
	\begin{rem}
		If $R$ is an $\EE_{2+m}$-algebra, if $a_i> 2+m-i$, then $R/(x_1^{a_1},\cdots,x_m^{a_m})$ admits an $\EE_2$-algebra structure by an iterated application of \cite{Burkland_mult_Moore}. 
	\end{rem}
	
	\begin{exmp}\label{exm:quotientsofEn}
		Let $\kappa$ be a perfect field of characteristic $p$ and $G$ be a formal group of height $n\geq 1$ over $\kappa$. Then let $\E(\kappa,G)$ denote the Lubin--Tate theory associated to this data, so that $\pi_*\E(\kappa,G) \cong \mathrm{W}(\kappa)\llb u_1,\cdots,u_{n-1}\rrb [u^{\pm1}]$. The ring $\pi_*\E(\kappa,G)$ is even and complete regular local, with an example of a regular sequence being $(p,v_1,\cdots,v_{n-1})$. It follows from \Cref{prop:compreglocalpic} that $\Pic_{\K(n)}\Mod(\E(\kappa,G)) = \Z/2$, and it follows from \Cref{iteratedquotient} that $\Pic(\E(\kappa,G)/(p^{a_0},\cdots,v_{n-1}^{a_{n-1}})) = \Z/2$ when the $\EE_1$-structure of the quotient ring $\E(\kappa,G)/(p^{a_0},\cdots,v_{n-1}^{a_{n-1}})$ refines to an $\EE_2$-structure.
	\end{exmp}
	\section{Computations of Picard groups of quotient ring spectra}\label{section:computations}
	In this section, we compute Picard groups of certain finite spectra. As first examples, we compute Picard groups of the quotients of Tate $\K$-theories and the quotients $\KO/\eta^k$ following the proof of \Cref{thm:mod_quot}. Then we turn to Picard groups of $\K(n)$-local generalized Moore algebras. The key computation tool is the (profinite) descent spectral sequence for Picard groups of $\K(n)$-local ring spectra.
	\begin{thm}[{\cite{LZ_profin_Picard,mor2023picard,Heard_2021Sp_kn-local,MS_Picard}}]\label{thm:picss}
		Let $M$ be a finite spectra that admits an $\Ebb_2$-ring structure. For any closed subgroup $G$ of the Morava stabilizer group $\Gbb_n$, we have a descent spectral sequence of Picard groups whose $E_2$-page is
		\begin{equation}\label{eqn:PicSS_M}
			^{\Pic} E_2^{s,t}=\H_c^s(G;\pi_t\cPic_{\K(n)}(\E_n\otimes M))\Longrightarrow \pi_{t-s}\cPic_{\K(n)}(\E_n^{hG}\otimes M), \qquad t-s\ge 0.
		\end{equation}
	\end{thm}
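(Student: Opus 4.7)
The plan is to combine the Picard descent spectral sequence of Mathew--Stojanoska \cite{MS_Picard} (and its generalization by Heard \cite{Heard_2021Sp_kn-local}) with the profinite descent machinery of \cite{LZ_profin_Picard,mor2023picard}. Since $M$ is a finite spectrum with an $\Ebb_2$-ring structure, the $\K(n)$-local smash product $\E_n\otimes M$ is an $\Ebb_2$-$\E_n$-algebra carrying a residual $\Gbb_n$-action, and I would first establish a descent equivalence $\L_{\K(n)}(\E_n^{hG}\otimes M)\simeq \L_{\K(n)}(\E_n\otimes M)^{hG}$ for any closed subgroup $G\leq\Gbb_n$. Finiteness of $M$ is what allows the smash product to commute past homotopy limits and past $\K(n)$-localization, reducing the claim to a homotopy-fixed-points spectral sequence for the $G$-action on $\cPic_{\K(n)}(\E_n\otimes M)$.

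Next, for a finite quotient $G/N$ acting on $\cPic_{\K(n)}(\E_n^{hN}\otimes M)$, I would apply the Picard descent spectral sequence of \cite{MS_Picard,Heard_2021Sp_kn-local}: this is the homotopy-fixed-points spectral sequence for $\cPic_{\K(n)}$ applied to the Amitsur cosimplicial resolution of $\E_n\otimes M$ over $\E_n^{hN}\otimes M$. Because the Picard space is connective, the resulting spectral sequence converges in the range $t-s\geq 0$ with $E_2$-term the discrete group cohomology $\H^s(G/N;\pi_t\cPic_{\K(n)}(\E_n\otimes M))$. To extend to a general closed $G\leq\Gbb_n$, I would take the limit over open normal subgroups $N\trianglelefteq G$ as in \cite{LZ_profin_Picard,mor2023picard}, identifying the limit of the discrete cohomology groups with the continuous cohomology $\H_c^s(G;\pi_t\cPic_{\K(n)}(\E_n\otimes M))$ appearing in the statement.

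The main obstacle is precisely this profinite step: one must ensure that the tower of finite-level Picard spectral sequences assembles into a well-defined limit spectral sequence with the claimed continuous-cohomology $E_2$-page, which requires controlling Milnor $\lim^1$ terms in the passage to the limit and checking that $\cPic_{\K(n)}(\E_n\otimes M)$ is suitably pro-well-behaved as a continuous $\Gbb_n$-object. This is exactly what the arguments of \cite{LZ_profin_Picard,mor2023picard} accomplish, so specializing their framework to the $\Ebb_2$-ring $\E_n\otimes M$ (using that $M$ is finite so that $\pi_t\cPic_{\K(n)}(\E_n\otimes M)$ is a reasonable pro-$\Gbb_n$-module in each degree) delivers the spectral sequence in the stated form and range.
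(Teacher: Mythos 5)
Your outline matches the argument the paper relies on: the paper gives no proof of its own here but simply cites \cite{LZ_profin_Picard,mor2023picard,Heard_2021Sp_kn-local,MS_Picard}, and your sketch (finite-level Picard descent \`a la Mathew--Stojanoska/Heard, followed by the profinite limit with continuous cohomology and $\lim^1$ control as in Levy--Zhang and Mor, using finiteness of $M$ to commute the smash product past homotopy limits and $\K(n)$-localization) is exactly how those references establish the result. This is essentially the same approach, so there is nothing to correct.
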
 
	
	In the case when $M$ is a generalized Moore $\EE_2$-algebra of type $n$ as in \cite{Burkland_mult_Moore}, \Cref{exm:quotientsofEn} implies that $\Pic(\E_n\otimes M)=\Z/2$.
	\subsection{Picard groups of quotients of $\KO$ and Tate $\K$-theories}
	
	The goal of this subsection is to compute Picard groups of the following quotients using the tools of \Cref{section:tools}.
	
	\begin{defn}\label{dfn:tateandkoquot}
		We define some $\EE_{\infty}$-rings of interest.
		\begin{enumerate}
			\item Wood's cofiber sequence
			\begin{equation*}
				\begin{tikzcd}
					\Sigma\KO\rar["\eta"]&\KO\rar& \KU
				\end{tikzcd}
			\end{equation*}
			implies that $\KU\simeq \KO/\eta$ as $\einf$-rings. Then $\KO/\eta^k$ obtains an $\EE_{\infty}$-structure, realized inductively as the pullback $\KO/\eta^{k-1}\times_{\KO/\eta^{k-1}\otimes_{\KO}\KU}\KU$.  Note that the underlying $\EE_1$-algebras agree with the descent tower of \Cref{definition:E1descenttower}, where the pullback is denoted as $D_k(\KO\to \KU)$.
			\item Consider the real and complex Tate $\K$-theory $\KO\llb q\rrb$ and $\KU\llb q\rrb$. Notice $\KO\simeq \KO\llb q\rrb /q$ and $\KU\simeq \KU\llb q\rrb /q$ as $\einf$-rings, we similarly obtain $\EE_{\infty}$-structures on $\KO\llb q\rrb /q^k$ and $\KU\llb q\rrb /q^k$ for any $k$.
		\end{enumerate}	
		
	\end{defn}
	An immediate consequence of \Cref{cor:pic_inj} is Picard groups of $\KU\llb q\rrb/q^k$ and $\KO\llb q\rrb/q^k$. 
	\begin{prop}\label{prop:Pic_TateK}
		For any $k\ge 1$, we have $\Pic(\KU\llb q\rrb/q^k)=\Z/2$ and $\Pic(\KO\llb q\rrb/q^k)=\Z/8$. 
	\end{prop}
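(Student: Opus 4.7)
The plan is to induct down the descent tower of the quotient map $R \to R/q$ for $R = \KU\llb q\rrb$ or $\KO\llb q\rrb$, showing that each base change $\Pic(R/q^{k+1}) \to \Pic(R/q^k)$ is injective, and then concluding from the classical computations $\Pic(\KU) = \Z/2$ and $\Pic(\KO) = \Z/8$, both generated by $\Sigma$. While \Cref{cor:pic_inj} as stated requires $n \geq 3$ (coming from Burklund's $\EE_2$-refinement), the $\EE_\infty$-structures provided by \Cref{dfn:tateandkoquot} agree as $\EE_1$-algebras with the descent tower, so I would apply \Cref{thm:mod_quot} and \Cref{lemma:relativepicardgroups} directly at every stage of the tower to bypass that restriction.

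The key simplification is that $|q| = 0$: the fiber $I$ of $R \to R/q$ is then equivalent to $R$ as an $R$-bimodule (via multiplication by $q$), so that $I^{\otimes_R (i+1)} \otimes_R (R/q) \simeq R/q$ for every $i$. The connectedness criterion of \Cref{thm:mod_quot} for each relative module space $\Mod(R/q^{k+1} \mid R/q^k)$ therefore collapses to the single condition $\pi_{-1}(R/q) = 0$, which is satisfied since $\KU$ and $\KO$ are connective. Applying \Cref{lemma:relativepicardgroups} then converts this connectedness into injectivity on Picard groups at every stage.

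Iterating yields an inclusion $\Pic(R/q^k) \hookrightarrow \Pic(R/q)$. To pin down the image, I would observe that the suspension class $\Sigma R/q^k$ maps to the generator $\Sigma R/q$, and that the Bott element $\beta$ (living in $\pi_2$ for $\KU\llb q\rrb$ and in $\pi_8$ for $\KO\llb q\rrb$) is invertible in $R$ and hence in each quotient $R/q^k$, so that $\Sigma^{|\beta|} R/q^k \simeq R/q^k$. Therefore $\Sigma$ already has order exactly $|\beta|$ in $\Pic(R/q^k)$, and injectivity forces $\Pic(R/q^k) = \langle \Sigma \rangle$ to be $\Z/2$ or $\Z/8$ as claimed. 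There is no serious obstacle here, matching the paper's characterization of this corollary as immediate; the only point requiring care is the compatibility of the $\EE_\infty$-structure with the descent tower's $\EE_1$-structure, which is built into \Cref{dfn:tateandkoquot}.
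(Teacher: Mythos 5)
Your proposal is correct and follows essentially the same route as the paper: the paper likewise invokes \Cref{thm:mod_quot} directly (via the compatibility of the $\EE_\infty$-structures with the descent tower noted in \Cref{dfn:tateandkoquot}) to get injectivity of each base change from $\pi_{-1}(\KU)=\pi_{-1}(\KO)=0$, and then concludes the injections are isomorphisms from the $2$- and $8$-periodicity of the quotients. Your explicit remarks that $|q|=0$ makes the vanishing condition uniform in $i$ and that this sidesteps the $n\ge 3$ hypothesis of \Cref{cor:pic_inj} are accurate elaborations of what the paper leaves implicit.
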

	\begin{proof}
		By \Cref{thm:mod_quot}, we have the base change maps on Picard groups $\Pic(\KU\llb q\rrb/q^{k+1})\to \Pic(\KU\llb q\rrb/q^{k})$ and $\Pic(\KO\llb q\rrb/q^{k+1})\to \Pic(\KO\llb q\rrb/q^{k})$ are injective for any $k$, since $\pi_{-1}(\KU\llb q\rrb/q)=\pi_{-1}(\KU)=0$ and $\pi_{-1}(\KO\llb q\rrb/q)=\pi_{-1}(\KO)=0$, respectively. It follows that the base change maps \begin{equation*}
			\Pic(\KU\llb q\rrb/q^{k})\to \Pic(\KU)=\Z/2,\qquad \Pic(\KO\llb q\rrb/q^{k+1})\to \Pic(\KO)=\Z/8,
		\end{equation*} are injections, where $\Pic(\KU)$ and $\Pic(\KO)$ are computed in \cite{Gepner-Lawson_Brauer,MS_Picard,Baker-Richter_invertible_modules}.   The two maps are both isomorphism since homotopy groups of $\KU\llb q\rrb/q^{k}$ and $\KO\llb q\rrb/q^{k}$ are $2$ and $8$-periodic, respectively. 
	\end{proof}
	Taking the inverse limit as $k\to \infty$, we obtain a completed version of the computation from \cite{MS_Picard}:
	\begin{cor}\label{cortatek}
		The Picard groups $\Pic( {\Mod}(\KU\llb q\rrb)^{\wedge}_q)$ and $\Pic( {\Mod}(\KO\llb q\rrb)^{\wedge}_q)$ of  the $q$-complete module categories of $\KU\llb q\rrb$ and $\KO\llb q\rrb$ are isomorphic to $\Z/2$ and $\Z/8$, respectively. 
	\end{cor}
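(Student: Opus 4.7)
The plan is to invoke the $v$-adic Picard equivalence
\[
	\cPic(\Mod(R)^{\wedge}_q) \simeq \lim_k \cPic(\Mod(R/q^k))
\]
from \cite{LZ_profin_Picard} (recalled in the introduction) with $R = \KU\llb q\rrb$ or $R = \KO\llb q\rrb$, and then to extract $\pi_0$ using the Milnor short exact sequence for the homotopy of a limit of connective spectra.

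Taking $\pi_0$ of this equivalence gives
\[
	0 \to \mathop{\lim\nolimits^1}_k \pi_1 \cPic(\Mod(R/q^k)) \to \Pic(\Mod(R)^{\wedge}_q) \to \lim_k \Pic(\Mod(R/q^k)) \to 0.
\]
By \Cref{prop:Pic_TateK}, each group $\Pic(\Mod(R/q^k))$ is $\Z/2$ (resp.\ $\Z/8$), and it is generated by $\Sigma R/q^k$ in each case. The base-change transition maps send $\Sigma R/q^{k+1}$ to $\Sigma R/q^k$, so they are isomorphisms between finite cyclic groups, and the right-hand term is $\Z/2$ (resp.\ $\Z/8$).

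It remains to verify that the $\lim^1$ term vanishes. Under the standard identification of the Picard spectrum, $\pi_1 \cPic(\Mod(R/q^k)) = \pi_0(R/q^k)^{\times}$. In both the real and complex cases $\pi_0(R/q^k) = \Z[q]/(q^k)$, and the transition maps $\Z[q]/(q^{k+1}) \to \Z[q]/(q^k)$ are surjective with nilpotent kernel, hence induce surjections on unit groups. The Mittag--Leffler condition therefore holds and the $\lim^1$ vanishes.

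There is no substantive obstacle here: the main step is to recognize that \Cref{prop:Pic_TateK} combined with the completion theorem of \cite{LZ_profin_Picard} immediately pins down the Picard group, and the only thing to check by hand is the elementary observation that units lift along surjections with nilpotent kernel, which forces the $\lim^1$ contribution to vanish.
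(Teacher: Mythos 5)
Your proposal is correct and follows essentially the same route as the paper: both reduce to $\lim_k \Pic(R/q^k)$ via the completion equivalence and then quote \Cref{prop:Pic_TateK}. The only difference is that the paper cites \cite[Main Theorem B]{LZ_profin_Picard} directly for the identification $\Pic(\Mod(R)^{\wedge}_q)\cong\lim_k\Pic(R/q^k)$, whereas you re-derive the $\pi_0$-level statement by hand via the Milnor sequence, with the $\lim^1$ vanishing correctly justified by lifting units along the surjections $\Z[q]/q^{k+1}\to\Z[q]/q^k$ with nilpotent kernel.
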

	\begin{proof}
		The proof of \cite[Main Theorem B]{LZ_profin_Picard} by the second and the third authors shows that
		\begin{align*}
			\Pic( {\Mod}(\KU\llb q\rrb)^{\wedge}_q)& \simto \lim_k \Pic(\KU\llb q\rrb/q^{k})\cong \Z/2,\\
			\Pic( {\Mod}(\KO\llb q\rrb)^{\wedge}_q)& \simto \lim_k \Pic(\KO\llb q\rrb/q^{k})\cong \Z/8.\qedhere
		\end{align*}
	\end{proof}
	\begin{rem}
		We note that it is possible to use \cite[Proposition 10.11]{Mathew_Galois} to show that the uncompleted and completed categories of modules of the complex Tate $\K$-theory $\KU\llb q\rrb$ in \Cref{cortatek} have the same Picard groups, giving another proof of \cite[Proposition 7.2.7]{MS_Picard}.
	\end{rem}
	We next turn to computing Picard groups of quotients of $\KO$ by powers of $\eta\in \pi_1(\KO)$.
	\begin{thm}
		The Picard groups of $\KO/\eta^k$ for $2\le k\le 5$ are:
		\begin{equation*}
			\Pic(\KO/\eta^k)=\left\{\begin{array}{@{}lcc}
				\Z/4, &k=2,3;& \textup{(\Cref{thm:PicKOeta2})}\\
				\Z/8, & k=4,5.& \textup{(\Cref{thm:eta4})}
			\end{array}\right.
		\end{equation*}
	\end{thm}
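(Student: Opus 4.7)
The plan is to apply the descent-tower machinery of \Cref{section:tools} to the map $\KO \to \KO/\eta \simeq \KU$, taking $v = \eta \in \pi_1\KO$. By \Cref{dfn:tateandkoquot}, the rings $\KO/\eta^k$ agree with the descent tower $D_{k-1}(f)$ of \Cref{definition:E1descenttower}, and the fiber $I$ of $\KO \to \KU$ is $\Sigma\KO$, so $I^{\otimes_{\KO}(n+1)}\otimes_{\KO}\KU \simeq \Sigma^{n+1}\KU$. \Cref{thm:mod_quot} thus bounds the kernel of the base change map $\Pic(\KO/\eta^{n+1}) \to \Pic(\KO/\eta^n)$ inside $\pi_{-1-n}\KU$, which by $2$-periodicity of $\KU$ vanishes precisely when $n$ is even. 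In particular, \Cref{cor:pic_inj} yields injections
\begin{equation*}
\Pic(\KO/\eta^3) \hookrightarrow \Pic(\KO/\eta^2),\qquad \Pic(\KO/\eta^5) \hookrightarrow \Pic(\KO/\eta^4),
\end{equation*}
reducing the main calculation to the cases $k = 2$ and $k = 4$.

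For the lower bound, the class $[\Sigma(\KO/\eta^k)]$ generates a cyclic subgroup of $\Pic(\KO/\eta^k)$ whose order equals the minimal periodicity of $\pi_*(\KO/\eta^k)$. Computing this periodicity from the long exact sequence of $\Sigma^k\KO \xrightarrow{\eta^k} \KO \to \KO/\eta^k$ together with the standard presentation $\pi_*\KO = \ZZ[\eta,\alpha,\beta^{\pm 1}]/(2\eta,\eta^3,\eta\alpha,\alpha^2-4\beta)$, I would identify it as $4$ for $k = 2,3$ (via a suitable lift of $\alpha \in \pi_4$ whose square becomes invertible once $\alpha^2 = 4\beta$ is corrected modulo $\eta^k$) and as $8$ for $k = 4,5$ (once $k$ exceeds the effective nilpotence of $\eta$, the order-$4$ obstruction from $\alpha^2 = 4\beta$ resurfaces and the first genuine periodicity is $\beta \in \pi_8$). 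This produces subgroups $\ZZ/4 \hookrightarrow \Pic(\KO/\eta^k)$ for $k = 2,3$ and $\ZZ/8 \hookrightarrow \Pic(\KO/\eta^k)$ for $k = 4,5$.

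The main obstacle is to show these lower bounds are sharp. The upper bound from the tower only cuts $\Pic(\KO/\eta^2)$ down to an extension of $\ZZ/2 = \Pic(\KU)$ by a subgroup of $\pi_{-2}\KU = \ZZ$, and similarly exhibits $\Pic(\KO/\eta^4)$ as an extension of $\Pic(\KO/\eta^3) \subseteq \ZZ/4$ by a subgroup of $\pi_{-4}\KU = \ZZ$. To cut these $\ZZ$'s down to the expected $\ZZ/2$ and $\ZZ/4$, I would unwind \Cref{prop:pullbackupperbound} to determine which integer classes in the relative module space actually realize as invertible modules, and combine this with the periodicity lower bound to force equality. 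The injections established in the first paragraph then propagate the answers from $k = 2, 4$ to $k = 3, 5$. This extension-theoretic step, organized in practice by the Picard descent spectral sequence of \Cref{thm:picss} applied to $\KO/\eta^k$, is the technical heart carried out separately in \Cref{thm:PicKOeta2} and \Cref{thm:eta4}.
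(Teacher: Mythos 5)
Your skeleton matches the paper's: the reduction of $k=3,5$ to $k=2,4$ via \Cref{cor:pic_inj} (using $\pi_{-3}(\KU)=\pi_{-5}(\KU)=0$) is exactly how the paper propagates the answer to odd powers, and the lower bounds $\Z/4\subseteq\Pic(\KO/\eta^2)$, $\Z/8\subseteq\Pic(\KO/\eta^4)$ from the $4$- and $8$-periodicity of the homotopy rings (the paper's \Cref{lem:KOeta2}) are also the paper's lower bounds. The gap is the upper bound for $k=2,4$, which you correctly flag as the technical heart but then do not actually carry out: you defer it to \Cref{thm:PicKOeta2} and \Cref{thm:eta4}, which are the statements being proved. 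The method you sketch for it does not obviously close. \Cref{thm:mod_quot} and \Cref{prop:pullbackupperbound} applied to $\KO/\eta^2\to\KU$ bound the relative module space by components of $\Omega^\infty\Sigma^2\KU$, i.e.\ by the \emph{infinite} group $\pi_{-2}\KU\cong\Z$, and there is no mechanism in those results for deciding which integer components contain invertible modules; likewise \Cref{thm:picss} is a profinite descent statement for $\E_n^{hG}\otimes M$ and is not applied to $\KO/\eta^k$ anywhere in the paper (the would-be fiber $\KU/\eta^k$ has $\pi_0$ of rank $2$ for $k$ odd, so its unit group is again infinite).

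What the paper actually does for the upper bound is different in kind: it uses that $\KO/\eta^2$ and $\KO/\eta^4$ are \emph{defined} as pullbacks of $\EE_\infty$-rings, $\KO/\eta^2\simeq\KU\times_{\KU\otimes_{\KO}\KU}\KU$ with $\KU\otimes_{\KO}\KU\simeq\map(C_2,\KU)$ by the Galois property, and $\KO/\eta^4\simeq\KO/\eta^2\times_{\KO/\eta^2\otimes_{\KO}\KO/\eta^2}\KO/\eta^2$. Since $\cPic$ commutes with limits of symmetric monoidal categories, one gets a pullback square of Picard \emph{spaces} for the oriented-pullback category $\mathcal{PB}\supseteq\Mod(\KO/\eta^2)$, and the resulting Mayer--Vietoris sequence involves $\pi_1\cPic=\pi_0(-)^\times$ rather than $\pi_0(-)$. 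These unit groups are finite ($\Z^\times=\{\pm1\}$, and $\pi_0(\KO/\eta^2\otimes_{\KO}\KO/\eta^2)^\times\cong\Z/2\oplus\Z/2$ after identifying the square-zero extension), which is precisely what produces the finite extensions $0\to\Z/2\to\Pic(\mathcal{PB})\to\Z/2\to0$ and $0\to\Z/2\to\Pic(\mathcal{PB})\to\Z/4\to0$ that the periodicity lower bound then resolves to $\Z/4$ and $\Z/8$. Without this (or some substitute argument ruling out all but finitely many of the $\Z$-worth of components), your proposal establishes only that $\Pic(\KO/\eta^2)$ contains $\Z/4$ and surjects onto a subgroup of $\Z/2$ with kernel contained in a quotient of $\Z$, which does not determine the group.
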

	\begin{lem}\label{lem:KOeta2}
		When $n\ge 3$, $\KO/\eta^n\simeq \KO\oplus \Sigma^{n+1}\KO$ as a $\KO$-module spectrum. For $n=2$, $\KO/\eta^2$ contains a unit in $\pi_4$, and its homotopy groups are:
		\begin{equation*}
			\pi_k(\KO/\eta^2)=\begin{cases}
				\Z,&k\equiv 0,3\mod 4;\\
				\Z/2&k\equiv 1\mod 4;\\
				0& k\equiv 2\mod 4. 
			\end{cases}
		\end{equation*}
	\end{lem}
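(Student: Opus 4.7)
The plan is to separate into two cases according to whether $\eta^n$ vanishes in $\pi_*\KO$. For $n \geq 3$, the standard fact $\pi_3(\KO) = 0$ forces $\eta^3 = 0$, and hence $\eta^n = 0$ in $\pi_n\KO$ for every $n \geq 3$. The defining map $\eta^n\colon \Sigma^n\KO \to \KO$ is therefore null-homotopic, and its cofiber splits canonically as $\KO \oplus \Sigma^{n+1}\KO$. For $n = 2$, I would first extract the low-dimensional homotopy groups from the long exact sequence of $\Sigma^2\KO \xrightarrow{\eta^2} \KO \to \KO/\eta^2 \to \Sigma^3\KO$: using the action of $\eta^2$ on $\pi_*\KO$, this directly gives $\pi_0 = \Z$, $\pi_1 = \Z/2$, $\pi_2 = 0$, $\pi_3 = \Z$, together with a short exact sequence $0 \to \Z \to \pi_4(\KO/\eta^2) \to \Z/2 \to 0$.

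To resolve the $\pi_4$ extension and exhibit the unit, I would apply the octahedral axiom to the factorization $\eta^2 = \eta \circ \Sigma\eta$, producing a second cofiber sequence $\Sigma\KU \to \KO/\eta^2 \to \KU$. Its long exact sequence embeds $\pi_4(\KO/\eta^2)$ into $\pi_4\KU = \Z$, forcing the extension to be non-split so that $\pi_4(\KO/\eta^2) \cong \Z$. The same analysis at $k = -4$, where the odd-degree contributions from $\pi_*\KU$ vanish on both sides, produces an isomorphism $\pi_{-4}(\KO/\eta^2) \xrightarrow{\sim} \pi_{-4}\KU = \Z\{u^{-2}\}$; write $\xi$ for its generator. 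Multiplication by $\xi$ is then a $\KO/\eta^2$-module map $\KO/\eta^2 \to \Sigma^{-4}\KO/\eta^2$ that becomes the Bott equivalence after base change along $\KO/\eta^2 \to \KU$, which is conservative by \Cref{lemma:descenttowerdescendable}. Hence $\cdot\,\xi$ is already an equivalence, so $\xi$ is a unit; in particular $\xi^{-1} \in \pi_4(\KO/\eta^2)$ is a unit generator, and the remaining homotopy groups are determined by the resulting $4$-periodicity.

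The main obstacle is exhibiting the unit in $\pi_4$ for the $n = 2$ case: the defining long exact sequence only determines $\pi_4$ up to an extension, and no obvious product of $\pi_*\KO$-classes or of the top-cell class in $\pi_3(\KO/\eta^2)$ is visibly invertible. The trick is to combine the secondary cofiber sequence, which rules out torsion in $\pi_4$, with conservativity of the descent tower, which upgrades an element mapping to a unit in $\KU$ to a genuine unit in $\KO/\eta^2$.
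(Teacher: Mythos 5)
Your route for $n=2$ is genuinely different from the paper's. The paper never touches the cofiber sequence $\Sigma^2\KO\xrightarrow{\eta^2}\KO\to\KO/\eta^2$: it works from the defining pullback square $\KO/\eta^2\simeq\KU\times_{\KU\otimes_{\KO}\KU}\KU$, identifies $\KU\otimes_{\KO}\KU\simeq\map(C_2,\KU)$ via the $C_2$-Galois property of $\KO\to\KU$, and reads all homotopy groups off the resulting Mayer--Vietoris sequence; the unit in $\pi_4$ is then visible with no extension problem, as the diagonal lift of $(u^2,u^2)$. Your plan --- the defining long exact sequence, then the octahedral cofiber sequence $\Sigma\KU\to\KO/\eta^2\to\KU$ to rule out torsion in $\pi_4$, then conservativity of $\KO/\eta^2\to\KU$ from \Cref{lemma:descenttowerdescendable} to upgrade a class hitting a unit of $\pi_*\KU$ to a unit --- is a viable alternative, and it engages exactly with the extension problem that the authors flag in the remark following the lemma as the weakness of the naive approach.

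There is, however, one step that is not justified as written. You assert that $\pi_{-4}(\KO/\eta^2)\to\pi_{-4}\KU$ is an isomorphism because the odd-degree contributions from $\pi_*\KU$ vanish on both sides. The vanishing of $\pi_{-4}(\Sigma\KU)=\pi_{-5}\KU$ only gives injectivity; the connecting map out of $\pi_{-4}\KU$ lands in $\pi_{-5}(\Sigma\KU)=\pi_{-6}\KU\cong\Z$, which is an even-degree group and does not vanish. Without surjectivity you only know that the generator $\xi$ of $\pi_{-4}(\KO/\eta^2)\cong\Z$ maps to $\pm u^{-2}$ or $\pm 2u^{-2}$, and in the latter case its image is not a unit in $\pi_*\KU$ and the conservativity step gives nothing. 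The gap is fixable: the composite $\pi_{-4}\KO\to\pi_{-4}(\KO/\eta^2)\to\pi_{-4}\KU$ is complexification, whose image is $2\Z\{u^{-2}\}$, while the non-split extension $0\to\pi_{-4}\KO\to\pi_{-4}(\KO/\eta^2)\to\Z/2\to0$ forces the first map to have image of index exactly $2$ in $\Z\{\xi\}$; hence $\xi\mapsto\pm u^{-2}$ as needed. (Running the same argument in degree $+4$ lets you bypass $\pi_{-4}$ entirely and show directly that the generator of $\pi_4(\KO/\eta^2)$ maps to $\pm u^{2}$.) With that repair your proof goes through.
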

	\begin{proof}
		When $n\ge3$, we have $\eta^n=0\in \pi_n(\KO)$. Therefore the cofiber of $\Sigma^{n}\KO\xrightarrow{\eta^n=0}\KO$ is equivalent to $\KO\oplus \Sigma^{n+1}\KO$ as a $\KO$-module. 
		
		In the $n=2$ case, first recall $\KU\simeq\KO/\eta$ by Wood's cofiber sequence. The $\EE_{\infty}$-structure on $\KO/\eta^2$ is obtained via the pullback square:
		\begin{equation}\label{eqn:KOeta2}
			\begin{tikzcd}
				\KO/\eta^2\rar\dar\ar[dr,phantom,"\lrcorner", very near start]& \KU\dar\\
				\KU\rar& \KU\otimes_{\KO}\KU.
			\end{tikzcd}
		\end{equation}
		As $\KO\to\KU$ is a $C_2$-Galois extension, the lower right corner of this pullback diagram is equivalent to the $\einf$-ring spectrum $\map(C_2,\KU)$. Under this identification, 
		\begin{itemize}
			\item the left unit map $\KU\xrightarrow{\id\otimes 1} \KU\otimes_\KO\KU\simeq \map(C_2,\KU)$ is adjoint to the trivial $C_2$-action on $ \KU$;
			\item the right unit map $\KU\xrightarrow{1\otimes \id} \KU\otimes_\KO\KU\simeq \map(C_2,\KU)$ is adjoint to the $C_2$-action on $\KU=\K\Rbb$. 
		\end{itemize}
		The long exact sequence in homotopy groups associated to the pullback diagram \eqref{eqn:KOeta2} then splits into shorter ones of the forms:
		\begin{align*}
			0\to \pi_{4m}(\KO/\eta^2)\to \pi_{4m}(\KU)\oplus\pi_{4m}(\KU)&\xrightarrow{\begin{pmatrix}
					1&1\\1&1
			\end{pmatrix}}\pi_{4m}(\KU)\oplus\pi_{4m}(\KU)\to  \pi_{4m-1}(\KO/\eta^2)\to 0,\\
			0\to \pi_{4m+2}(\KO/\eta^2)\to \pi_{4m+2}(\KU)\oplus\pi_{4m+2}(\KU)&\xrightarrow{\begin{pmatrix}
					1&1\\1&-1
			\end{pmatrix}}\pi_{4m+2}(\KU)\oplus\pi_{4m+2}(\KU)\to  \pi_{4m+1}(\KO/\eta^2)\to 0.
		\end{align*}
		The claim follows by computing the kernels and the cokernels of those maps. Note we in particular see that the unit in $\pi_4$ of $\KU$ lifts to $\KU/\eta^2$.
	\end{proof}
	\begin{rem}
		Alternatively, one might try to compute $\pi_*(\KO/\eta^2)$ using the long exact sequence associated to the cofiber sequence $\Sigma^2\KO\xrightarrow{\eta^2}\KO\to \KO/\eta^2$. While this method completely determines $\pi_k(\KO/\eta^2)$ when $k\not\equiv 4\mod 8$, it only computes $\pi_{8m+4}(\KO/\eta^2)$ up to an extension.
	\end{rem}
	Under the $\einf$-ring structures given by the pullback squares,  one can check that the ring structures on the homotopy groups of $\KO/\eta^2$ and $\KO/\eta^3$ are:
	\begin{align*}
		\pi_*(\KO/\eta^2)&=\Z[\eta,\zeta,u^{\pm 2}]/(2\eta,\eta^2,\zeta\eta,\zeta^2), &&|\eta|=1,|\zeta|=-1, |u^2|=4.\\
		\pi_*(\KO/\eta^3)&=\Z[\eta,\varepsilon,u^{\pm 2}]/(2\eta,\eta^3,\varepsilon\eta,\varepsilon^2), &&|\eta|=1,|\varepsilon|=0, |u^2|=4.
	\end{align*}
	In particular, this multiplicative structure on $\KO/\eta^3$ is not equivalent to a square-zero extension of $\KO$  as an $\mathbb{A}_2$-$\KO$-algebra.
	\begin{rem}
		Here is an alternative explanation as to why $\KO/\eta^2$ and $\KO/\eta^3$ are $4$-periodic. Recall in the HFPSS of $\KO\simeq \KU^{hC_2}$, there is a $d_3$-differential $d_3(u^2)=\eta^3$. The target of this differential is killed in the quotient rings $\KO/\eta^2$ and $\KO/\eta^3$. This makes $u^2$ a permanent cycle in the corresponding HFPSS, which eventually becomes the periodicity element in the homotopy rings $\pi_*(\KO/\eta^2)$ and $\pi_*(\KO/\eta^3)$. By building our quotients in filtered $\EE_{\infty}$-rings, it is then possible to directly see that $u^2$ lifts to $\KO/\eta^3$.
	\end{rem}
	\begin{rem}
		From the computation above, we can see the base change map on Picard groups $\Pic(\KO/\eta^2)\to \Pic(\KO/\eta)=\Pic(\KU)=\Z/2$ cannot be injective, since the homotopy groups $\pi_*(\KO/\eta^2)$ are not even periodic. Notice $\pi_{-|\eta|-1}(\KO/\eta)=\pi_{-2}(\KU)\ne 0$,  this gives another example of the necessity of the assumption $\pi_{-n|v|-1}(R/v)=0$ for $\Pic(R/v^{n+1})\to \Pic(R/v^n)$ to be injective in \Cref{cor:pic_inj}.
	\end{rem}
	\begin{prop}\label{thm:PicKOeta2}
		$\Pic(\KO/\eta^3)\simto \Pic(\KO/\eta^2)=\Z/4$. 
	\end{prop}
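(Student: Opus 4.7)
The proof proceeds in three parts: (i) the upper bound $\Pic(\KO/\eta^2)\leq \Z/4$, (ii) injectivity of the base-change map $\Pic(\KO/\eta^3)\to\Pic(\KO/\eta^2)$, and (iii) the fact that $[\Sigma]$ realizes an order-$4$ element in both Picard groups. Parts (ii) and (iii) are routine applications of the tools of \Cref{section:tools}, while the main obstacle will be (i).

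For (ii), I apply \Cref{thm:mod_quot} to the descent tower of $f\colon \KO\to\KU$. By \Cref{example:quotientbyanelement}, $D_i(f)\simeq \KO/\eta^{i+1}$, so the relevant relative module space at $i=1$ is $\Mod(\KO/\eta^3\mid \KO/\eta^2)$. The fiber $I:=\fib(f)\simeq \Sigma\KO$ gives $I^{\otimes_{\KO}2}\otimes_{\KO}\KU\simeq \Sigma^2\KU$, and $\pi_{-1}(\Sigma^2\KU)=\pi_{-3}\KU=0$. Thus this relative module space is connected. Combining this with the $\Ebb_\infty$-structure of \Cref{dfn:tateandkoquot} and the conservativity of \Cref{lemma:descenttowerdescendable}, \Cref{lemma:relativepicardgroups} yields the desired injection. (Note that \Cref{cor:pic_inj} itself requires $n\geq 3$, but at the level of \Cref{thm:mod_quot} there is no such restriction.)

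For (iii), I use the $4$-periodicity of $\pi_*(\KO/\eta^k)$ for $k=2,3$: the unit $u^{\pm 2}\in \pi_{\pm 4}$ displayed after \Cref{lem:KOeta2} forces $\Sigma^4(\KO/\eta^k)\simeq \KO/\eta^k$, so $[\Sigma]$ has order dividing $4$. To rule out order $\leq 2$, I compare $\pi_0$ with $\pi_2$: these are $\Z$ vs.\ $0$ for $k=2$ and $\Z^{\oplus 2}$ vs.\ $\Z/2$ for $k=3$ (computed via the splitting $\KO/\eta^3\simeq \KO\oplus\Sigma^4\KO$ of \Cref{lem:KOeta2}), so $[\Sigma^2]\neq[\KO/\eta^k]$. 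Consequently, once (i) is in hand, the injection of (ii) combined with the order-$4$ classes of (iii) forces $\Pic(\KO/\eta^3)=\Pic(\KO/\eta^2)=\Z/4$, and the base-change map is an isomorphism.

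For (i), the hardest step, my plan is to use $C_2$-Galois descent. Since $\KO\to\KU$ is $C_2$-Galois and Galois extensions are stable under base change, the map $\KO/\eta^2\to \KU/\eta^2:=\KU\otimes_{\KO}\KO/\eta^2 \simeq \KU\oplus\Sigma^3\KU$ is also $C_2$-Galois. Running the associated Picard descent spectral sequence (in the spirit of \Cref{thm:picss}, adapted to the finite Galois group $C_2$), the contributions in total degree zero come from $H^0(C_2;\Pic(\KU/\eta^2))$ and $H^1(C_2;(\pi_0\KU/\eta^2)^\times)=H^1(C_2;\Z^\times)=\Z/2$. The delicate points are: identifying $\Pic(\KU/\eta^2)=\Z/2$ (generated by $[\Sigma]$, using that $\KU/\eta^2$ is a square-zero extension of $\KU$ combined with Bott $2$-periodicity $\Sigma^2\KU/\eta^2\simeq \KU/\eta^2$); controlling higher-filtration contributions; and ruling out exotic Picard classes. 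The spectral sequence then exhibits $\Pic(\KO/\eta^2)$ as an extension of $\Z/2$ by $\Z/2$, which the order-$4$ class $[\Sigma]$ from (iii) resolves as $\Z/4$. A fallback if this Galois-descent bookkeeping is cumbersome is to apply \Cref{prop:pullbackupperbound} directly to the pullback $\KO/\eta^2=\KU\times_{\KU\otimes_{\KO}\KU}\KU$ with the multiplication map, though the naive set-theoretic bound this gives (by $\pi_0\Omega^\infty\Sigma^2\KU=\Z$) must be refined by an $\Ebb_\infty$-structural argument to become a group-theoretic bound of $\Z/2$ on the kernel of $\Pic(\KO/\eta^2)\to\Pic(\KU)$.
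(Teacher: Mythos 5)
Your parts (ii) and (iii) are correct and agree with the paper: the paper deduces injectivity of $\Pic(\KO/\eta^3)\to\Pic(\KO/\eta^2)$ from $\pi_{-3}(\KU)=0$ and detects the order-$4$ class $[\Sigma]$ from the $4$- but not $2$-periodicity of $\pi_*(\KO/\eta^k)$. (Your remark that one should route through \Cref{thm:mod_quot} rather than \Cref{cor:pic_inj}, whose hypothesis $n\geq 3$ fails here, is well taken; the $\EE_\infty$-structures of \Cref{dfn:tateandkoquot} make this harmless.) The gap is in step (i), which you rightly flag as the hard part but do not close. In your primary route, $C_2$-Galois descent along $\KO/\eta^2\to\KU\otimes_{\KO}\KO/\eta^2\simeq\KU\oplus\Sigma^3\KU$, the assertion that the $0$-stem only sees $\H^0(C_2;\Pic)$ and $\H^1(C_2;\pi_0^\times)$ fails already on the $E_2$-page: for $t\geq 2$ one has $\pi_t\cPic(\KU/\eta^2)\cong\pi_{t-1}(\KU/\eta^2)$, and the summand $\Sigma^3\KU$ makes these groups nonzero in every degree, so $E_2^{s,s}$ is potentially nonzero for all $s\geq 2$. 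Concretely, $E_2^{3,3}=\H^3(C_2;\pi_2(\KU/\eta^2))=\H^3(C_2;\Z_{\mathrm{sgn}})=\Z/2$ is nonzero, and in the analogous spectral sequence for $\KO=\KU^{hC_2}$ the corresponding class \emph{survives} and is exactly what makes $\Pic(\KO)=\Z/8$ rather than $\Z/4$; so to land on $\Z/4$ here you must produce a differential killing it, and none is supplied. In addition, the input $\Pic(\KU/\eta^2)=\Z/2$ is asserted from ``square-zero extension plus Bott periodicity,'' but $\pi_0(\KU/\eta^2)=\Z$ is not local and Picard groups of ring spectra are not literally nil-invariant, so this claim needs its own proof.

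Your fallback is essentially the paper's actual argument, and the refinement you worry about is available off the shelf: by \cite[Proposition 2.2.3]{MS_Picard}, taking Picard spaces commutes with limits of symmetric monoidal categories, so applying $\cPic$ to the pullback of module categories arising from $\KO/\eta^2\simeq\KU\times_{\KU\otimes_{\KO}\KU}\KU$ (with $\KU\otimes_{\KO}\KU\simeq\map(C_2,\KU)$ by the Galois property) yields a pullback square of grouplike $\EE_\infty$-spaces, hence an honest Mayer--Vietoris long exact sequence of abelian groups --- no ad hoc upgrade from a set-theoretic to a group-theoretic bound is needed. Since $\pi_1(\KU)=0$, $\pi_0(\KU)^\times=\{\pm1\}$ and $\Pic(\KU)=\Z/2$, this sequence exhibits the Picard group of the pullback category, which contains $\Pic(\KO/\eta^2)$ as a subgroup by full faithfulness, as an extension of $\Z/2$ by $\Z/2$; your order-$4$ class from (iii) then resolves the extension as $\Z/4$. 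I recommend promoting this fallback to the main argument.
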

	\begin{proof}
		The ring structure on $\KO/\eta^2$ is defined via the pullback square \eqref{eqn:KOeta2} of $\einf$-ring spectra. Applying the oriented pullback  \Cref{const:pullbackbimodule}, we obtain a diagram of categories:
		\begin{equation*}
			\begin{tikzcd}
				\Mod(\KO/\eta^2)\ar[dr,dashed,start anchor=south east]\ar[ddr,bend right=15,end anchor=west]\ar[drr,bend left=15,end anchor=north]&&\\&\mathcal{PB}\rar\dar\ar[dr,phantom,"\lrcorner", very near start]& \Mod(\KU)\dar\\
				&\Mod(\KU)\rar& \Mod(\map(C_2,\KU)),
			\end{tikzcd}
		\end{equation*}
		where $\Mod(\KO/\eta^2)\to \mathcal{PB}$ is a fully faithful embedding of symmetric monoidal $\infty$-categories.  By \cite[Proposition 2.2.3]{MS_Picard}, taking Picard spaces commute with limits of monoidal categories. This yields a pullback square of group-like $\einf$-spaces:
		\begin{equation*}
			\begin{tikzcd}
				{\cPic}(\mathcal{PB})\rar\dar\ar[dr,phantom,"\lrcorner", very near start]& {\cPic}(\KU)\dar\\
				{\cPic}(\KU)\rar& {\cPic}(\map(C_2,\KU)),
			\end{tikzcd}
		\end{equation*}
		where 
		\begin{itemize}
			\item $\Pic(\KO/\eta^2)\to \Pic(\mathcal{PB})=\pi_0(\cPic(\mathcal{PB}))$ is injective;
			\item $\pi_k\cPic(\mathcal{PB})\cong \pi_k(\cPic(\KO/\eta^2))$ for all $k\ge 1$; In particular, $\pi_1(\cPic(\mathcal{PB}))=\pi_0(\KO/\eta^2)^\times= \Z/2$. 
			\item $\pi_k\cPic(\map(C_2,\KU))\cong \pi_k\map(C_2,\cPic(\KU))\cong \map(C_2,\pi_k\cPic(\KU))$ for all $k\ge 0$.
		\end{itemize}
		From this, we obtain a long exact sequence in homotopy groups of Picard spaces: 
		\begin{align}
			\map(C_2,\pi_1(\KU))\longrightarrow &\pi_0(\KO/\eta^2)^\times\longrightarrow \pi_0(\KU)^\times\oplus\pi_0(\KU)^\times\longrightarrow \map(C_2,\pi_0(\KU)^\times)\nonumber\\
			\longrightarrow& \Pic(\mathcal{PB})\longrightarrow \Pic(\KU)\oplus \Pic(\KU)\longrightarrow \map(C_2,\Pic(\KU)).\label{eqn:picKOeta2}
		\end{align}
		Plugging in the known homotopy groups and maps between them as in the proof of \Cref{lem:KOeta2}, the Picard group $\Pic(\mathcal{PB})$ sits in an exact sequence:
		\begin{equation*}
			0\longrightarrow \Z/2\longrightarrow \Z/2\oplus \Z/2\xrightarrow{\begin{pmatrix}
					1&1\\1&1
			\end{pmatrix}} \Z/2\oplus\Z/2\longrightarrow \Pic(\mathcal{PB})\longrightarrow \Z/2\oplus\Z/2\xrightarrow{\begin{pmatrix}
					1&1\\1&1
			\end{pmatrix}}\Z/2\oplus \Z/2.
		\end{equation*}
		This determines $\Pic(\mathcal{PB})$ up to an extension:
		\begin{equation*}
			0\to \Z/2\longrightarrow \Pic(\mathcal{PB})\longrightarrow \Z/2\to 0.
		\end{equation*}
		From the computation of $\pi_*(\KO/\eta^2)$ in \Cref{lem:KOeta2}, we know $\KO/\eta^2$ is $4$-periodic. Therefore $\Pic(\mathcal{PB})\supseteq \Pic(\KO/\eta^2)$ contains a cyclic subgroup of order $4$. Combined with the extension problem above, this forces $ \Pic(\KO/\eta^2)=\Pic(\mathcal{PB})=\Z/4$. 
		
		By \Cref{cor:pic_inj}, the base change map $\Pic(\KO/\eta^3)\to \Pic(\KO/\eta^2)=\Z/4$ is injective since $\pi_{-2|\eta|-1}(\KO/\eta)=\pi_{-3}(\KU)=0$. As a result, $\Pic(\KO/\eta^3)$ is either $0$, $\Z/2$, or $\Z/4$. By \Cref{lem:KOeta2}, homotopy groups of $\KO/\eta^3\simeq \KO\oplus\Sigma^4\KO$ (as $\KO$-module spectra) are not even periodic, yielding $\Pic(\KO/\eta^3)=\Z/4$.  
	\end{proof}
	
	\begin{rem}
		We note that the proof above fails for the quotients $\KO^\wedge_2/\eta^2$ and $\KO/(2^m,\eta^2)$. This is because $\pi_0(\KU^\wedge_2)^\times=\Z_2^\times$ and $\pi_0(\KU/2^m)^\times=(\Z/2^m)^\times$ are larger than $\pi_0(\KU)^\times=\Z^\times=\{\pm 1\}$. 
		
		Rerunning the long exact sequence \eqref{eqn:KOeta2} after $2$-completing all the spectra inside and using the periodicity of $\pi_*(\KO^\wedge_2/\eta^2)$, one can show that $\Pic(\mathcal{PB}^\wedge_2)=\Z_2\oplus\Z/4$. However, it is not clear whether $\Pic(\KO^\wedge_2/\eta^2)$ is a proper subgroup of $\Pic(\mathcal{PB}^\wedge_2)$ or not. 
	\end{rem}
	\begin{prop}\label{thm:eta4}
		$\Pic(\KO/\eta^5)\simto \Pic(\KO/\eta^4)=\Z/8$. 
	\end{prop}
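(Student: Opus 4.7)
The plan is to combine an injectivity argument with a direct computation of $\Pic(\KO/\eta^4)$. The injectivity of the base change $\Pic(\KO/\eta^5)\to \Pic(\KO/\eta^4)$ follows immediately from \Cref{cor:pic_inj} applied with $R=\KO$, $v=\eta$, $n=4$, since $\pi_{-1-4|\eta|}(\KO/\eta)=\pi_{-5}(\KU)=0$. Thus it suffices to show $\Pic(\KO/\eta^4)=\Z/8$ with generator $\Sigma\KO/\eta^4$.

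For the lower bound $\Z/8\subseteq \Pic(\KO/\eta^4)$, observe that $\eta^4=\eta\cdot \eta^3=0$ in $\pi_*\KO$, so the argument of \Cref{lem:KOeta2} yields $\KO/\eta^4\simeq \KO\oplus \Sigma^5\KO$ as a $\KO$-module, in particular with $8$-periodic homotopy. Comparing $\pi_{-j}(\KO/\eta^4)=\pi_{-j}(\KO)\oplus \pi_{-j-5}(\KO)$ for $0<j<8$ against $\pi_0(\KO/\eta^4)=\Z$ shows that none of these groups are isomorphic to $\Z$, hence the modules $\Sigma^j\KO/\eta^4$ for $0\le j\le 7$ are pairwise inequivalent and the suspension class has exact order $8$.

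The main work is the upper bound $\Pic(\KO/\eta^4)\subseteq \Z/8$, for which we follow the strategy of \Cref{thm:PicKOeta2}. Applying \Cref{const:pullbackbimodule} to the pullback presentation $\KO/\eta^4\simeq \KO/\eta^3\times_T \KU$ with $T:=\KO/\eta^3\otimes_\KO \KU$ produces a fully faithful symmetric monoidal embedding $\Mod(\KO/\eta^4)\hookrightarrow \mathcal{PB}$ into the pullback category. Taking Picard spaces via \cite[Proposition 2.2.3]{MS_Picard} yields a pullback square of grouplike $\EE_\infty$-spaces and a long exact sequence relating $\pi_*\cPic(\mathcal{PB})$ to the Picard spaces of $\KO/\eta^3$, $\KU$, and $T$. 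The inputs are $\Pic(\KO/\eta^3)=\Z/4$ from \Cref{thm:PicKOeta2}, $\Pic(\KU)=\Z/2$, and a description of $T$: the $\KO$-module splitting $\KO/\eta^3\simeq \KO\oplus \Sigma^4\KO$ base-changes to $T\simeq \KU\oplus \Sigma^4\KU$, and the ring presentation $\pi_*(\KO/\eta^3)=\Z[\eta,\varepsilon,u^{\pm 2}]/(2\eta,\eta^3,\varepsilon\eta,\varepsilon^2)$ with $|\varepsilon|=0$ identifies $T$ with the trivial square-zero extension $\KU[\varepsilon]/\varepsilon^2$, so that $\pi_0(T)^\times=\Z/2\oplus \Z$.

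The hardest step is resolving the extension problems in the resulting long exact sequence of Picard space homotopy groups, especially controlling the connecting maps that involve the nilpotent units $1+n\varepsilon\in \pi_0(T)^\times$. Once one shows $\Pic(\mathcal{PB})$ has order at most $8$, the lower bound from the second paragraph forces $\Pic(\KO/\eta^4)=\Z/8$, generated by $\Sigma$. Combined with the injectivity from the first paragraph and the $\Z/8$ lower bound for $\Pic(\KO/\eta^5)$ coming from $\KO/\eta^5\simeq \KO\oplus \Sigma^6\KO$, we conclude $\Pic(\KO/\eta^5)\simto \Pic(\KO/\eta^4)=\Z/8$.
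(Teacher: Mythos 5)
Your overall architecture matches the paper's: injectivity of $\Pic(\KO/\eta^5)\to\Pic(\KO/\eta^4)$ via \Cref{cor:pic_inj}, a Mayer--Vietoris analysis of a pullback presentation of $\KO/\eta^4$ for the upper bound, and periodicity for the lower bound. You choose the decomposition $\KO/\eta^4\simeq \KO/\eta^3\times_T\KU$ with $T=\KO/\eta^3\otimes_\KO\KU$, whereas the paper uses $\KO/\eta^4\simeq \KO/\eta^2\times_{\KO/\eta^2\otimes_\KO\KO/\eta^2}\KO/\eta^2$ (the first stage of the descent tower of $\KO\to\KO/\eta^2$); both are legitimate and either can be made to work.

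The genuine gap is that you never carry out the one computation the proof rests on: bounding $\Pic(\mathcal{PB})$ by $8$. You explicitly defer ``the hardest step,'' but this is not a routine extension problem. In your long exact sequence the contribution from below is the cokernel of $\pi_0(\KO/\eta^3)^\times\oplus\pi_0(\KU)^\times\to\pi_0(T)^\times$, and since, as you note, $\pi_0(T)^\times\cong\Z/2\oplus\Z$ is \emph{infinite}, no finiteness of $\Pic(\mathcal{PB})$ --- let alone the bound $8$ --- follows until this map is computed. The missing input is that complexification $\pi_{-4}\KO\to\pi_{-4}\KU$ is multiplication by $2$ (since $c(\alpha)=2u^{2}$ and $c(\beta)=u^{4}$), so the nilpotent units $1+n\varepsilon$ map to $1+2n\varepsilon'$ and the cokernel is $\Z/2$; combined with $\ker\bigl(\Z/4\oplus\Z/2\to\Pic(T)\bigr)\cong\Z/4$ this yields $0\to\Z/2\to\Pic(\mathcal{PB})\to\Z/4\to 0$ and hence order $8$. (The paper's choice of pullback sidesteps this: there the corner has $\pi_0(\KO/\eta^2\otimes_\KO\KO/\eta^2)^\times\cong\Z/2\oplus\Z/2$, finite from the outset, because the $\Sigma^3\KO/\eta^2$ summand contributes only a $\Z/2$ to $\pi_0$.) Separately, your lower-bound argument contains an error: $\pi_{-3}(\KO/\eta^4)\cong\pi_{-4}(\KO/\eta^4)\cong\Z\cong\pi_0(\KO/\eta^4)$, so it is false that none of the $\pi_{-j}$ for $0<j<8$ is isomorphic to $\Z$, and comparing $\pi_0$ of suspensions alone does not separate all eight. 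The correct (and easy) fix is the paper's: $\pi_*(\KO/\eta^4)$ is $8$-periodic but not $4$-periodic (e.g.\ $\pi_1\cong\Z\oplus\Z/2$ while $\pi_5\cong\Z$), so $\Sigma\KO/\eta^4$ has order exactly $8$.
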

	\begin{proof}
		We first show $\Pic(\KO/\eta^4)=\Z/8$ by applying the method in \Cref{thm:PicKOeta2} to the pullback square of $\einf$-ring spectra:
		\begin{equation*}
			\begin{tikzcd}
				\KO/\eta^4\rar\dar\ar[dr,phantom,description,"\lrcorner", very near start] & \KO/\eta^2\dar\\
				\KO/\eta^2\rar& \KO/\eta^2\otimes_{\KO}\KO/\eta^2. 
			\end{tikzcd}
		\end{equation*}
		This induces a pullback square of group like $\einf$-spaces by \cite[Lemma 1.7]{LT_pullbacks} and \cite[Proposition 2.2.3]{MS_Picard}:
		\begin{equation*}
			\begin{tikzcd}
				{\cPic}(\mathcal{PB})\rar\dar\ar[dr,phantom,description,"\lrcorner", very near start] &{\cPic}( \KO/\eta^2)\dar\\
				{\cPic}(\KO/\eta^2)\rar&{\cPic}( \KO/\eta^2\otimes_{\KO}\KO/\eta^2), 
			\end{tikzcd}
		\end{equation*}
		where the natural map $\cPic(\KO/\eta^4)\to \cPic(\mathcal{PB})$ is a fully faithful embedding of $\infty$-groupoids. This induces a long exact sequence in homotopy groups:
		\begin{align}
			\cdots \to \pi_1( \KO/\eta^2\otimes_{\KO}\KO/\eta^2)&\to \pi_0(\KO/\eta^4)^\times \to \pi_0(\KO/\eta^2)^\times \oplus\pi_0(\KO/\eta^2)^\times \to \pi_0( \KO/\eta^2\otimes_{\KO}\KO/\eta^2)^\times\nonumber\\ &\to \Pic(\mathcal{PB})\to \Pic(\KO/\eta^2)\oplus\Pic(\KO/\eta^2)\to \Pic( \KO/\eta^2\otimes_{\KO}\KO/\eta^2). \label{eqn:LES_picKOeta4}
		\end{align}
		Let us analyze this long exact sequence:
		\begin{itemize}
			\item  By \Cref{thm:PicKOeta2}, we have $\Pic(\KO/\eta^2)\cong \Z/4$ is generated by $\Sigma \KO/\eta^2$. It follows that $\ker( \Pic(\KO/\eta^2)\oplus\Pic(\KO/\eta^2)\to \Pic( \KO/\eta^2\otimes_{\KO}\KO/\eta^2))\cong \Z/4$ is the diagonal of the source. 
			\item 	Note $\KO/\eta^2\otimes_{\KO}\KO/\eta^2\simeq \KO/\eta^2\oplus \Sigma^3 \KO/\eta^2$ (as $\KO$-module spectra). By \Cref{lem:KOeta2}, we have 
			\begin{equation*}
				\pi_0( \KO/\eta^2\otimes_{\KO}\KO/\eta^2)=\Z\oplus\Z/2, \qquad \pi_1( \KO/\eta^2\otimes_{\KO}\KO/\eta^2)=\Z/2. 
			\end{equation*}
			Note the left unit map $\KO/\eta^2\to \KO/\eta^2\otimes_{\KO}\KO/\eta^2$ induces an injective ring homomorphism $\Z=\pi_0(\KO)\to \pi_0( \KO/\eta^2\otimes_{\KO}\KO/\eta^2)=\Z\oplus \Z/2$. This forces the ring structure on $ \pi_0( \KO/\eta^2\otimes_{\KO}\KO/\eta^2)$ to be a square-zero extension of $\Z$ by the abelian group $\Z/2$. We thus obtain 
			\begin{equation*}
				\pi_0( \KO/\eta^2\otimes_{\KO}\KO/\eta^2)^\times \cong \Z/2\oplus\Z/2. 
			\end{equation*}
			\item Note $\Z=\pi_0(\KO/\eta^4)\to \pi_0(\KO/\eta^2)=\Z$ is a ring map, which is necessarily the identity. We have the map $\Z/2=\pi_0(\KO/\eta^4)^\times \to \pi_0(\KO/\eta^2)^\times \oplus\pi_0(\KO/\eta^2)^\times=\Z/2\oplus\Z/2$ is an injection. 
		\end{itemize}
		Based on the observations above, the long exact sequence becomes:
		\begin{equation*}
			\pi_1( \KO/\eta^2\otimes_{\KO}\KO/\eta^2)\xrightarrow{0} \Z/2\to \Z/2\oplus\Z/2\to \Z/2\oplus\Z/2\to \Pic(\mathcal{PB})\to \Z/4\oplus\Z/4\to  \Pic( \KO/\eta^2\otimes_{\KO}\KO/\eta^2).
		\end{equation*}
		This implies $\Pic(\mathcal{PB})$ sits in a short exact sequence:
		\begin{equation*}
			0\to \Z/2\longrightarrow \Pic(\mathcal{PB})\longrightarrow \Z/4\to 0. 
		\end{equation*}
		By \Cref{lem:KOeta2}, $\KO/\eta^4$ is at least $8$-periodic. This implies  $\Pic(\mathcal{PB})\supseteq \Pic(\KO/\eta^4)$ contains a cyclic group of order $8$. The short exact sequence above therefore forces $\Pic(\mathcal{PB})= \Pic(\KO/\eta^4)=\Z/8$. 
		
		By \Cref{cor:pic_inj}, the base change map $\Pic(\KO/\eta^5)\to \Pic(\KO/\eta^4)$ is injective, since $\pi_{-1-4|\eta|}(\KO/\eta)=\pi_{-5}(\KU)=0$. By \Cref{lem:KOeta2}, $\pi_*(\KO/\eta^5)=\pi_*(\KO)\oplus\pi_*(\Sigma^6\KO)$ is not $4$-periodic. This implies \begin{equation*}
			\Pic(\KO/\eta^5)\simto \Pic(\KO/\eta^4)=\Z/8.\qedhere
		\end{equation*} 
	\end{proof}
	\subsection{Picard groups of generalized Moore spectra of type $n$}
	At general heights, we have the following qualitative results of Picard groups of $\K(n)$-local generalized Moore spectra. 
	\begin{thm}\label{thm:Moore_Pic}
		Let $M=\mathbb{S}/(p^{d_0},v_1^{d_1},\cdots, v_{n-1}^{d_{n-1}})$ be a generalized Moore $\Ebb_2$-algebra, i.e. a generalized Moore spectrum of type $n$ equipped with an $\Ebb_2$-ring structure as in \cite{Burkland_mult_Moore}.
		\begin{enumerate}
			\item The Picard group $\Pic(\L_{\K(n)}M)$ is finite.
			\item When $2p-1> n^2$ and $(p-1)\nmid n$, the Picard group $\Pic(\L_{\K(n)}M)$ is algebraic in the sense of a short exact sequence:
			\begin{equation*}
				\begin{tikzcd}
					0\to \H^1_c(\Gbb_n;\pi_0(\E_n\otimes  M)^\times)\rar & \Pic(\L_{\K(n)}M)\rar &\Z/2 \to 0.
				\end{tikzcd}
			\end{equation*}
		\end{enumerate}
	\end{thm}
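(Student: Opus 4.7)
The plan is to apply the descent spectral sequence \eqref{eqn:PicSS_M} of \Cref{thm:picss} with $G=\Gbb_n$, which converges to $\pi_{t-s}\cPic(\L_{\K(n)}M)$, and extract $\Pic(\L_{\K(n)}M)$ from the diagonal $t=s$. The coefficient system is determined by \Cref{exm:quotientsofEn}, which gives $\pi_0\cPic_{\K(n)}(\E_n\otimes M)=\Pic(\E_n\otimes M)=\Z/2$ with trivial $\Gbb_n$-action (the generator $\Sigma(\E_n\otimes M)$ is a topological suspension), together with $\pi_1\cPic_{\K(n)}(\E_n\otimes M)=\pi_0(\E_n\otimes M)^\times$ and $\pi_t\cPic_{\K(n)}(\E_n\otimes M)=\pi_{t-1}(\E_n\otimes M)$ for $t\ge 2$, from the standard identification of Picard with $\mathrm{gl}_1$ in positive degrees.

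For part (1), the key observation is that $\pi_*(\E_n\otimes M)=\pi_*\E_n/(p^{d_0},v_1^{d_1},\cdots,v_{n-1}^{d_{n-1}})$ is a finite abelian group in each degree (assuming $\kappa$ finite, as we may), since the residue field $\pi_*\E_n/(p,v_1,\cdots,v_{n-1})$ is a finite graded field. Continuous cohomology $\H^s_c(\Gbb_n;-)$ of finite discrete modules takes values in finite groups and vanishes for $s$ beyond the virtual cohomological dimension, which is bounded by $n^2$. Hence every entry $^{\Pic}E_2^{s,s}$ on the $t-s=0$ line is finite with only finitely many nonzero, and $\Pic(\L_{\K(n)}M)$ is assembled from these via finitely many finite extensions, so it is finite.

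For part (2), the strategy is to show the only nonzero entries on the diagonal $t=s$ lie at $(0,0)$ and $(1,1)$ and that no differential in or out of this diagonal is nontrivial, producing the claimed short exact sequence with $^{\Pic}E_\infty^{0,0}=\Z/2$ and $^{\Pic}E_\infty^{1,1}=\H^1_c(\Gbb_n;\pi_0(\E_n\otimes M)^\times)$. Evenness of $\pi_*(\E_n\otimes M)$ immediately kills $^{\Pic}E_2^{s,s}$ for even $s\ge 2$. For odd $s\ge 3$, the module $\pi_{s-1}(\E_n\otimes M)$ is, via the ideal filtration defining $M$, a successive subquotient of twists of $\pi_*\E_n/\mfrak$ by powers of the periodicity generator $u$, on which $\Gbb_n$ acts through (a power of) the determinant character. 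Under $2p-1>n^2$ and $(p-1)\nmid n$, the classical sparsity vanishing of $\H^s_c(\Gbb_n;\pi_*\E_n/\mfrak)$ used in \cite{BSSW_Picard} applies to each subquotient in the range $3\le s\le n^2$ and forces the required vanishing. The same sparsity rules out nonzero differentials hitting or emanating from the $t-s=0$ diagonal, and the surjection onto $\Z/2$ is realized topologically by $\Sigma\L_{\K(n)}M\in\Pic(\L_{\K(n)}M)$.

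The main obstacle is verifying $\H^s_c(\Gbb_n;\pi_{s-1}(\E_n\otimes M))=0$ for odd $s\ge 3$ under the stated hypotheses. By d\'evissage along the $(p^{d_0},v_1^{d_1},\cdots,v_{n-1}^{d_{n-1}})$-filtration of $\pi_*(\E_n\otimes M)$, this reduces to the known sparsity vanishing for the residue field, but one must track the grading shifts produced by the various $v_i^{d_i}$ carefully to ensure that each shifted twist still lies in the vanishing range afforded by $2p-1>n^2$ and $(p-1)\nmid n$. A secondary point, also settled by the same sparsity, is that every higher differential on the $t-s=0$ diagonal is forced to vanish because its source and target bidegrees both lie in vanishing regions.
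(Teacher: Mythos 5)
Your part (2) is essentially the paper's argument: the sparsity $\H^s_c(\Gbb_n;\pi_t(\E_n)/\Ical)=0$ unless $2(p-1)\mid t$, which passes from $\pi_*\E_n$ to the quotient by d\'evissage because each $|v_i^{d_i}|=2d_i(p^i-1)$ is divisible by $2(p-1)$, combined with the horizontal vanishing line at $s=\cd_p(\Sbb_n)=n^2$ when $(p-1)\nmid n$, isolates $E_2^{0,0}$ and $E_2^{1,1}$ on the $0$-stem, makes them permanent cycles, and yields the extension. That part goes through.

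The gap is in part (1). You claim that $\H^s_c(\Gbb_n;A)$ vanishes for finite discrete $A$ once $s$ exceeds the virtual cohomological dimension of $\Gbb_n$. This is false for profinite groups with torsion: $\H^s(\Z/p;\Z/p)\cong\Z/p$ for all $s\ge 0$ while $\mathrm{vcd}(\Z/p)=0$. When $(p-1)\mid n$ the Morava stabilizer group contains $p$-torsion, $\cd_p$ is infinite, and there is \emph{no} horizontal vanishing line on the $E_2$-page of \eqref{eqn:PicSS_M}; since part (1) carries no hypothesis on $p$ and $n$, your assertion that only finitely many diagonal entries are nonzero fails precisely in these cases, and with it the conclusion that $\Pic(\L_{\K(n)}M)$ is assembled from finitely many finite pieces. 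What is actually needed is that the \emph{descent filtration} on $\Pic(\L_{\K(n)}M)$ is finite, and this does not come from an $E_2$-vanishing line but from the convergence analysis of the profinite descent spectral sequence in \cite[\S 3.4]{LZ_profin_Picard} (a vanishing line at a finite page, ultimately from descendability of $\E_n$ over $\Sbb_{\K(n)}$). Once that is granted, finiteness of each individual $E_2^{s,s}$ suffices; your finiteness claims for those groups (finite coefficient modules, $\Gbb_n$ compact $p$-adic analytic) are correct and are what the paper extracts from the crossed-homomorphism argument for $s=1$ and from \cite[Lemma 4.21]{fixedpt} for $s>1$.
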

	\begin{proof}
		\begin{enumerate}
			\item The Picard group of $\L_{\K(n)}M$ can be computed by the profinite descent spectral sequence in \eqref{eqn:PicSS_M}. The argument in  \cite[\S3.4]{LZ_profin_Picard} by second and the third authors shows that the filtration on $\Pic(\L_{\K(n)}M)=\pi_0\cPic(\L_{\K(n)}M)$ from the $E_\infty$-page of \eqref{eqn:PicSS_M} is finite. The associated graded terms of this finite filtration are $^{\Pic} E_\infty^{s,s}$ for $0\le s\le N$ for some $N$, which are sub-quotients of 
			\begin{equation*}
				^{\Pic} E_2^{s,s}=\H_c^s(\Gbb_n;\pi_t\cPic(\E_n\otimes M))=\begin{cases}
					\H_c^0(\Gbb_n;\Pic(\E_n\otimes M)), & s=0;\\
					\H_c^1(\Gbb_n;\pi_0(\E_n\otimes M)^\times), &s=1;\\
					\H_c^s(\Gbb_n;\pi_{s-1}(\E_n\otimes M)), & s>1.
				\end{cases}
			\end{equation*}
			As a result, it suffices to prove the cohomology groups above are all finite. 
			\begin{itemize}
				\item For $s=0$, we have proved $\Pic(\E_n\otimes M)=\Z/2$ in \Cref{exm:quotientsofEn}. This implies $\H_c^0(\Gbb_n;\Pic(\E_n\otimes M))=\H_c^0(\Gbb_n;\Z/2)=\Z/2$ is finite. 
				\item For $s=1$, recall $\H_c^1(\Gbb_n;\pi_0(\E_n\otimes M)^\times)$  is represented by continuous crossed homomorphisms from $\Gbb_n$ to $\pi_0(\E_n\otimes M)^\times$. The set of such continuous crossed homomorphisms is finite since $\Gbb_n$ is topologically finitely generated and $\pi_0(\E_n\otimes M)^\times$ is a finite group.
				\item For $s> 1$, the finiteness of the continuous group cohomology $\H_c^s(\Gbb_n;\pi_{s-1}(\E_n\otimes M))$ was shown in the proof of \cite[Lemma 4.21]{fixedpt} (bottom of page 22). 
			\end{itemize}
			This proves the finiteness of $\Pic(\L_{\K(n)} M)$. 
			\item Recall that the $E_2$-page of the homotopy fixed point spectral sequence 
			\begin{equation*}
				E_2^{s,t}=\H_c^s(\Gbb_n;\pi_t(\E_n))\Longrightarrow \pi_{t-s}\left(\Sbb_{\K(n)}\right)
			\end{equation*}
			is sparse in the sense that $\H_c^s(\Gbb_n;\pi_t(\E_n))=0$ unless $2(p-1)\mid t$ (see \cite[Remark 1.4]{Goerss-Hopkins}). It follows that for an invariant ideal $\Ical=(p^{d_0},u_1^{d_1},\cdots, u_{n-1}^{d_{n-1}})\trianglelefteq \pi_0(\E_n)$, we have 
			\begin{equation*}
				\H_c^s(\Gbb_n;\pi_t(\E_n)/\Ical)=0 \text{ unless } 2(p-1)\mid t. 
			\end{equation*}
			This means in the descent filtration for $\Pic(\L_{\K(n)}M)$ from the spectral sequence \eqref{eqn:PicSS_M}, the next possible non-zero associated graded term after $E_\infty^{0,0}$ and $E_\infty^{1,1}$ is $E_{\infty}^{2p-1,2p-1}$, which is a sub-quotient of $E_2^{2p-1,2p-1}=\H_c^{2p-1}(\Gbb_n;\pi_{2p-2}(\E_n\otimes M))$. When $(p-1)\nmid n$, we have $\cd_p(\mathbb{S}_n)=n^2$. This imposes a horizontal vanishing line at $s=n^2$ on the $E_2$-page of \eqref{eqn:PicSS_M}. If we further assume $2p-1>n^2$, then  $E_{2}^{0,0}$ and $E_2^{1,1}$ are both permanent cycles and the only two non-zero terms in the $t-s=0$ stem on the $E_2$-page.  Therefore $\Pic(\L_{\K(n)}M)$ sits in an extension:
			\begin{equation*}
				\begin{tikzcd}
					0\to E_\infty^{1,1}=E_2^{1,1}=\H^1_c(\Gbb_n;\pi_0(\E_n\otimes  M)^\times)\rar & \Pic(\L_{\K(n)}M)\rar &\Z/2=E_2^{0,0}=E_\infty^{0,0}\to 0.
				\end{tikzcd}\qedhere
			\end{equation*}
		\end{enumerate}
	\end{proof}
	As an application to \Cref{thm:Moore_Pic}, we will compute Picard groups of $\K(1)$-local Moore spectra $\Sbb_{\K(1)}/p^k$ in \Cref{subsec:ht1}. At higher heights, Barthel--Schlank--Stapleton--Weinstein computed the even algebraic $\K(n)$-local Picard group $\Pic_{\K(n)}^{\mathrm{alg},0}\cong \H^1_c(\Gbb_n;\pi_0(\E_n)^\times)$ in \cite{BSSW_Picard}. When  $2p-1>n^2$ and $(p-1)\nmid n$, the Picard group of the $\K(n)$-local category is algebraic in the same sense as in \Cref{thm:Moore_Pic}. At height $n\ge 2$, their computation then yields \cite[Corollary 2.4.4]{BSSW_Picard}
	\begin{equation}\label{eqn:Pic_Kn}
		\Pic\left(\Sp_{\K(n)}\right)\cong \Zp\oplus\Zp\oplus \Z/(2p^n-2),
	\end{equation}
	generated by $\Sigma\Sbb_{\K(n)}$ and $\Sbb_{\K(n)}\ldetr^{\otimes(p-1)}$, where $\Sbb_{\K(n)}\ldetr$ is the determinant twist of the $\K(n)$-local sphere constructed in \cite{BBGS_det}. 
	
	In \cite{LZ_profin_Picard}, the second and the third authors used Burklund's result in \cite{Burkland_mult_Moore} to refine a construction in \cite{Hovey-Strickland_1999} that there is a tower of generalized Moore spectra $\{M_j\}$ of type $n$ such that $M_j$ is an $\Ebb_j$-algebra over $M_{j+1}$ and there is an equivalence $\Sbb_{\K(n)}\simto \lim_j \L_{\K(n)}M_j$ in $\CAlg_{\K(n)}$. Then they proved that 
	\begin{equation}\label{eqn:PicKn_lim}
		\Pic\left(\Sp_{\K(n)}\right) \simto \lim_j \Pic(\L_{\K(n)}M_j). 
	\end{equation}
	In light of \Cref{thm:Moore_Pic} and the computation \eqref{eqn:Pic_Kn} in \cite{BSSW_Picard}, it is a natural question to ask what is the even algebraic Picard group $\H^1_c(\Gbb_n;\pi_0(\E_n\otimes M)^\times)$ of a $\K(n)$-local generalized Moore $\Ebb_2$-algebra  $M$. Let's first analyze the images of the two generators from \eqref{eqn:Pic_Kn}. 
	\begin{itemize}
		\item (Regular spheres) By the Periodicity Theorem \cite{HS_nilp2}, $\L_{\K(n)}M$ admits a (minimal) periodicity $v_n^{p^N}$ for some $N\ge 0$.
		\item (Determinant spheres)  One can check that the determinant map $\Gbb_n\xrightarrow{\det} \Zpx\le \pi_0(\E_n)^{\times}$ has order $(p-1)p^{d_0-1}$ modulo $p^{d_0}$. 
	\end{itemize}
	Therefore, if $M=\Sbb/(p^{d_0},\cdots, v_{n-1}^{d_{n-1}})$, then the images of the two generators $\Sigma\Sbb_{\K(n)}$ and $\Sbb_{\K(n)}\ldetr^{\otimes(p-1)}$ of $\Pic(\Sp_{\K(n)})$ in $\Pic(\L_{\K(n)}M)$ have orders $2(p^n-1)p^N$ and $p^{d_0-1}$, respectively. Note that $d_0>1$ -- otherwise $M$ does not admit an $\Ebb_2$-ring structure as a consequence of the Hopkins--Mahowald theorem \cite[Theorem 4.18]{Mathew_2015}. Then  $\Sigma\Sbb_{\K(n)}$ and $\Sbb_{\K(n)}\ldetr^{\otimes(p-1)}$ are linearly independent over $M$. Hence when $n\ge 2$, $2p-1>n^2$, and $(p-1)\nmid n$, we obtain:
	\begin{equation*}
		\mathrm{Im}\left[\Pic\left(\Sp_{\K(n)}\right) \longrightarrow \Pic(\L_{\K(n)}M)\right]	=\Z/p^N\oplus\Z/p^{d_0-1}\oplus\Z/(2p^n-2). 
	\end{equation*}
	\begin{quest}\label{quest:Pic_M}
		Under the assumptions above, is the map $\Pic\left(\Sp_{\K(n)}\right) \longrightarrow \Pic(\L_{\K(n)}M)$ surjective so that $ \Pic(\L_{\K(n)}M)\cong \Z/p^N\oplus\Z/p^{d_0-1}\oplus\Z/(2p^n-2)$? 
	\end{quest}
	Note that the inverse limit of the images agree with $\Pic\left(\Sp_{\K(n)}\right)$ in \eqref{eqn:Pic_Kn}. This is true more generally by the proof of \cite[Proposition 14.3.(d)]{Hovey-Strickland_1999}. Therefore anything in the complement of the image in \Cref{quest:Pic_M} does not survive the inverse limit process in  \eqref{eqn:PicKn_lim}. 
	\subsection{Computations at height $1$}\label{subsec:ht1}
	By \cite{Burkland_mult_Moore}, $\mathbb{S}/p^k$ admits an $\Ebb_2$-ring structure when $p>2$ and $k\ge 3$. We apply \Cref{thm:picss} to the Picard groups of the $\K(1)$-localizations to obtain:
	\begin{thm}\label{thm:Pic_S0pk}
		When $p>2$ and $k\ge 3$, we have  $\Pic(\Sbb_{\K(1)}/p^k)=\Z/2(p-1)p^{k-1}$.
	\end{thm}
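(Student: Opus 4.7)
The plan is to apply part (2) of \Cref{thm:Moore_Pic} to reduce the computation to a continuous group cohomology at height one, compute that cohomology explicitly, and then resolve the resulting $\Z/2$-extension using the minimal $v_1$-periodicity of $\Sbb_{\K(1)}/p^k$.

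First I would verify the hypotheses of \Cref{thm:Moore_Pic}(2) at $n=1$: the inequality $2p - 1 > n^2 = 1$ is automatic for $p \ge 2$, and $(p-1) \nmid 1$ for every $p > 2$. Thus the theorem applies and yields a short exact sequence
\begin{equation*}
0 \longrightarrow \H^1_c(\Gbb_1; \pi_0(\E_1 \otimes \Sbb/p^k)^{\times}) \longrightarrow \Pic(\Sbb_{\K(1)}/p^k) \longrightarrow \Z/2 \longrightarrow 0.
\end{equation*}

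Next I would compute the kernel. At height one and odd $p$, $\Gbb_1 = \Zpx$ and $\E_1$ is a form of $p$-complete $\KU$, so $\E_1 \otimes \Sbb/p^k \simeq \E_1/p^k$ and $\pi_0(\E_1/p^k)^{\times} = (\Z/p^k)^{\times}$. The action of $\ell \in \Gbb_1$ on $\pi_{2t}\E_1$ is by $\ell^t$, hence trivial on $\pi_0$. Using the isomorphisms $\Zpx \cong \Z/(p-1) \oplus \Z_p$ and $(\Z/p^k)^{\times} \cong \Z/(p-1) \oplus \Z/p^{k-1}$ valid for odd $p$, the $\H^1$ with trivial coefficients is $\mathrm{Hom}_{\mathrm{cts}}$, and the cross-terms vanish by coprimality of $p$ and $p-1$, leaving $\Z/(p-1) \oplus \Z/p^{k-1} \cong \Z/(p-1)p^{k-1}$. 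Hence $|\Pic(\Sbb_{\K(1)}/p^k)| = 2(p-1)p^{k-1}$.

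Finally, to resolve the extension I would show that $\Sigma \Sbb_{\K(1)}/p^k \in \Pic$ has order exactly $2(p-1)p^{k-1}$. Its image in the quotient $\Z/2$ is the generator, and its order equals the minimal $m > 0$ for which $\pi_m \Sbb_{\K(1)}/p^k$ contains a unit. For $p$ odd the minimal $v_1$-periodicity of $\Sbb_{\K(1)}/p^k$ is $v_1^{p^{k-1}}$, of topological degree $2(p-1)p^{k-1}$, so the order of $\Sigma$ equals $|\Pic|$, forcing $\Pic(\Sbb_{\K(1)}/p^k)$ to be cyclic of order $2(p-1)p^{k-1}$ and generated by $\Sigma$. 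The main obstacle is this sharp minimal-periodicity input: one must know that no power of $v_1$ of exponent strictly smaller than $p^{k-1}$ lifts to a self-equivalence of $\Sbb_{\K(1)}/p^k$. This is a classical height-one computation for odd primes, readable off the $K(1)$-local descent spectral sequence for $\Sbb/p^k$, and the rest of the argument is formal once it is in hand.
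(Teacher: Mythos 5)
Your proposal is correct and follows essentially the same route as the paper: the paper runs the Picard descent spectral sequence for $\Sbb_{\K(1)}/p^k \to \KU/p^k$ directly and uses $\cd_p(\Z_p^\times)=1$ to collapse it, which is exactly the argument packaged in \Cref{thm:Moore_Pic}(2) that you invoke, and the identification of the $\H^1_c$ term with $(\Z/p^k)^\times\cong\Z/(p-1)p^{k-1}$ and the resolution of the $\Z/2$-extension via the minimal $2(p-1)p^{k-1}$-periodicity of $\pi_*\Sbb_{\K(1)}/p^k$ coincide with the paper's steps. Your explicit flagging of the sharp minimal-periodicity input (no $v_1$-self-map of exponent below $p^{k-1}$) is the same classical fact the paper uses implicitly when it asserts the extension does not split.
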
 
	\begin{proof}
	As $\cd_{p}\Zpx=1$, the $E_2$-page of the descent spectral sequence in \Cref{thm:picss}
		\begin{equation*}
			E_2^{s,t}=\H_c^s(\Zpx;\pi_t\cPic(\KU/p^k))\Longrightarrow \pi_{t-s}(\Sbb_{\K(1)}/p^k), t-s\ge 0
		\end{equation*}
		has at most  two non-zero entries when $t-s\ge0$ and we have an extension on the $0$-th stem on the $E_\infty$-page:
		\begin{equation*}
			\begin{tikzcd}
				0\to \H^1_c(\Zpx;\pi_0(\KU/p^k)^\times)=\zx{p^k}\rar & \Pic_{\K(1)}(\Sbb_{\K(1)}/p^k)\rar & \Pic_{\K(1)}(\KU/p^k)^{\Zpx}=\Z/2\to0. 
			\end{tikzcd}
		\end{equation*}
		The homotopy groups of $\Sbb_{\K(1)}/p^k$ are $2(p-1)p^{k-1}$-periodic, so it follows that short exact sequence above does not split and we have $\Pic_{\K(1)}(\Sbb_{\K(1)}/p^k)= \Z/2(p-1)p^{k-1}$.
	\end{proof}
	At \(p=2\), we first compute $\Pic(\KO/2^k)$ by descending from $\Pic(\KU/2^k)=\Z/2$.
	\begin{thm}\label{thm:PicKO2k}
		When $k\ge 6$, $\Pic(\KO/2^k)=\Z/8$.
	\end{thm}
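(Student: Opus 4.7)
The plan is to compute $\Pic(\KO/2^k)$ via the descent spectral sequence for the $C_2$-Galois extension $\KO/2^k \to \KU/2^k$, obtained by base change of the classical $C_2$-Galois $\KO \to \KU$ along $- \otimes \Sbb/2^k$. In the style of \Cref{thm:PicKOeta2} and \Cref{thm:eta4}, this can equivalently be set up by applying $\cPic$ to the pullback square $\KO/2^k \simeq \KU/2^k \times_{\map(C_2, \KU/2^k)} \KU/2^k$. Either way one obtains
\begin{equation*}
{}^{\Pic}\!E_2^{s,t} = \H^s(C_2; \pi_t\cPic(\KU/2^k)) \Longrightarrow \pi_{t-s}\cPic(\KO/2^k).
\end{equation*}
The hypothesis $k \ge 6$ is used to ensure $\KO/2^k$ admits an $\Ebb_2$-refinement via \cite{Burkland_mult_Moore} so that this spectral sequence is well-defined.

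The relevant inputs are $\Pic(\KU/2^k) = \Z/2$ from \Cref{exm:quotientsofEn}, $\pi_0(\KU/2^k)^\times = (\Z/2^k)^\times \cong \Z/2 \oplus \Z/2^{k-2}$ (for $k \ge 3$) on which $C_2$ acts trivially since complex conjugation fixes $\pi_0\KU = \Z$, $\pi_1\KU/2^k = 0$, and $\pi_{2m}\KU/2^k = \Z/2^k$ with sign action for odd $m$ and trivial action for even $m$. On the zeroth stem this gives $E_2^{0,0} = \Z/2$, $E_2^{1,1} \cong (\Z/2)^{\oplus 2}$, $E_2^{3,3} = \Z/2$, and $E_2^{s,s} = \Z/2$ for each odd $s \ge 5$. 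Compared with the Mathew--Stojanoska computation of $\Pic(\KO) = \Z/8$ in \cite{MS_Picard}, the only new contribution is the extra $\Z/2$ summand at $(1,1)$ arising from the $2$-torsion in $(\Z/2^k)^\times$ beyond $\{\pm 1\}$.

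The hard part will be showing that this extra $\Z/2$ at $(1,1)$, together with all the $E_2^{s,s}$ contributions for $s \ge 5$, are killed by differentials. For the high-filtration classes I would import the standard $d_3$-differentials from the homotopy fixed point spectral sequence of $\KO$, which govern the analogous pattern in the Picard SS above filtration $1$. For the extra class at $(1,1)$ I would identify it with a non-permanent class in $E_2^{1,0} = \H^1(C_2; \Z/2^k)$ of the HFPSS of $\KO/2^k$, which converges to the vanishing group $\pi_{-1}\KO/2^k = 0$ and so must be killed by a differential; the compatibility between the HFPSS and the Picard SS above filtration $1$ then forces the matching differential in the Picard SS to kill the extra class.

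The lower bound $\Z/8 \hookrightarrow \Pic(\KO/2^k)$ comes from the $8$-periodicity inherited from $\KO$, so that $\Sigma\KO/2^k$ has exact order $8$ in the Picard group. Combining with the upper bound of order $8$ from the spectral sequence analysis, we conclude $\Pic(\KO/2^k) = \Z/8$. As a consistency check, \Cref{cor:pic_inj} applied with $R = \KO$, $v = 2$ (noting $\pi_{-1}\KO/2 = 0$ since $\pi_{-1}\KO = \pi_{-2}\KO = 0$) gives an injection $\Pic(\KO/2^{k+1}) \hookrightarrow \Pic(\KO/2^k)$ once $\KO/2^k$ and $\KO/2^{k+1}$ are $\Ebb_2$, confirming that the sequence of Picard groups is non-increasing in $k$ and stabilizes at $\Z/8$.
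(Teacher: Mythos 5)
Your setup, $E_2$-page, and lower bound are all in line with the paper's argument, and importing the $d_3$-differentials from the HFPSS of $\KO$ to dispose of the classes in filtration $s\ge 3$ is exactly what the paper does via the Mathew--Stojanoska comparison tool. The genuine gap is your treatment of the extra class at $(1,1)$. That class lives in $\H^1(C_2;\pi_1\cPic(\KU/2^k))=\H^1(C_2;(\Z/2^k)^\times)$, and the comparison between the Picard descent spectral sequence and the HFPSS only identifies $\pi_t\cPic$ with $\pi_{t-1}$ of the ring (and transports differentials) in the range $t\ge 2$; at $t=1$ the coefficient group is the \emph{multiplicative} group $(\Z/2^k)^\times$, not the additive group $\pi_0(\KU/2^k)=\Z/2^k$, and indeed $\H^1(C_2;(\Z/2^k)^\times)\cong\Z/2\oplus\Z/2$ while $\H^1(C_2;\Z/2^k)\cong\Z/2$. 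There is no natural identification of the extra generator $\alpha=1+2^{k-1}$ with a class in $E_2^{1,0}$ of the HFPSS, and no mechanism by which the vanishing of $\pi_{-1}(\KO/2^k)$ forces a differential on $\alpha$ in the Picard spectral sequence. This step is precisely the hard part of the theorem, and it is where the paper has to invoke the machinery of Section \ref{section:tools}.

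What the paper actually does is argue by contradiction: if $\alpha$ were a permanent cycle represented by $X\in\Pic(\KO/2^k)$, then since $\alpha$ dies in $\H^1(C_2;(\Z/2^{k-1})^\times)$, the base change of $X$ to $\KO/2^{k-3}$ would be $\KO/2^{k-3}$ or $\Sigma^4\KO/2^{k-3}$. Applying \Cref{prop:pullbackupperbound} to the Milnor context for the pullback square expressing $\KO/2^k$ over $\KO/2^{k-3}$ and $\KO/2^3$, one finds that $\Mod(\KO/2^k\mid\KO/2^{k-3})$ is contained in $\Omega^\infty(\Sigma\KO/2^3)$, which is connected; hence $X$ itself would be a suspension of the unit, contradicting the assumption that it has descent filtration $2$. (A separate naturality argument, comparing with $\KO/2^{k-1}$, then pins down that $\alpha$ in fact supports a $d_3$ rather than a $d_5$, but that refinement is not needed for the theorem.) You would need to supply an argument of this kind; the HFPSS comparison cannot do it.
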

	\begin{proof}
		By \Cref{thm:picss} (\cite[(3-5)]{MS_Picard}), we have a descent spectral sequence:
		\begin{equation}\label{eqn:PicSS_KO2k}
			\!^{\Pic}E_2^{s,t}=\H^s(C_2;\pi_{t}\cPic(\KU/2^k))\Longrightarrow \pi_{t-s}\cPic(\KO/2^k).
		\end{equation}
		There is a similar homotopy fixed point spectral sequence:
		\begin{equation}\label{eqn:HFPSS_KO2k}
			\!^{\HFP}E_2^{s,t}=\H^s(C_2;\pi_{t}(\KU/2^k))\Longrightarrow \pi_{t-s}(\KO/2^k).
		\end{equation}
		Explicit computations in group cohomology of $C_2$ yield:
		\begin{equation*}
			\!^{\HFP}E_2^{s,t}=\begin{cases}
				\Z/2, & s=0,t\equiv 2\mod 4,\\& \text{or }s>0, t \text{ even};\\
				\Z/2^k, & s=0,4\mid t;\\
				0,&\text{else. }
			\end{cases}
			\qquad\qquad
			\!^{\Pic}E_2^{s,t}=\begin{cases}
				\Z/2, & t=0,\\
				&\text{or }s=0, t\equiv 3\mod 4,t>0,\\ 
				&\text{or }s\ge 1, t\ge 2 \text{ odd};\\
				(\Z/2^k)^\times, &s=0, t=1;\\
				\Z/2^k, & s=0, t\equiv 1\mod 4,t>1;\\
				\Z/2\oplus\Z/2, &s>0, t=1;\\
				0,&\text{else. }
			\end{cases}
		\end{equation*}
		Comparing with the homotopy fixed spectral sequence for $\KO\simeq \KU^{hC_2}$ and \cite[Comparison Tool 5.2.4]{MS_Picard}, we then obtain the $E_2$-pages and $d_3$-differentials in the HFPSS \eqref{eqn:HFPSS_KO2k} for $\KO/2^k$ and the DSS \eqref{eqn:PicSS_KO2k} for $\cPic \left(\mathrm{KO}/2^k\right)$ as below in \Cref{fig:sseq_KO2k}, except for the $d_3$-differential in red. The differentials and the extension problem on the $t-s=-1$ stem that are irrelevant to the computation of $\Pic(\KO/2^k)$ will be explained separately in \Cref{rem:PicSS_KO2k}. 
		
		\DisableQuotes
		\begin{figure}[ht]
			\begin{subfigure}[b]{0.48\linewidth}
				\centering
				\begin{sseqpage}[
					Adams grading,
					,x range={-2}{8},y range={0}{8},scale=0.57,classes={fill},tick style={font=\small,
						inner sep=0pt,
						outer sep=0pt,}]
					\fill[color=gray!30] (-0.5,-1) -- (-0.5,3.5) -- (-1.5,3.5) -- (-1.5,4.5) -- (-2.5,4.5) -- (-2.5,8.5) -- (8.5,8.5) -- (8.5,-1) --cycle;
					\foreach \x in {-18,-16,...,12} \foreach \y in {1,2,...,12} {\class(\x+\y,\y)}
					\foreach \x in {-12,-8,...,8}{
						\class[circlen=3](\x,0)
						\class (\x+2,0)
					}
					\foreach \x in {-12,-4} \foreach \y in {0,1,...,9} {\d3(\x+\y,\y) \d3(\x+\y+2,\y)}
					\foreach \y in {0,1,...,6} {\d3(4+\y,\y) \d3(6+\y,\y)}
				\end{sseqpage}
				\caption{\centering $E_2$-page and differentials of \\  the HFPSS \eqref{eqn:HFPSS_KO2k} for $\KO/2^{k}$.}
			\end{subfigure}
			\begin{subfigure}[b]{0.48\linewidth}
				\begin{sseqpage}[
					Adams grading,
					,x range={-1}{9},y range={0}{8},scale=0.57,classes={fill},tick style={font=\small,
						inner sep=0pt,
						outer sep=0pt,}]
					\fill[color=gray!30] (0.5,-1) -- (0.5,3.5) -- (-0.5,3.5) -- (-0.5,4.5) -- (-1.5,4.5) -- (-1.5,8.5) -- (9.5,8.5) -- (9.5,-1) --cycle;
					\node[inner sep=0pt, pin={[pin distance=1.5ex,font=\scriptsize]above:{[\beta]}}] at (-1,6) {};
					\foreach \x in {3,5,...,19} \foreach \y in {1,2,...,15} {\class({\x-\y},\y)}
					\foreach \y in {1,...,5} {\class({1-\y},\y)
						\class({1-\y},\y)
					}
					\node[inner sep=0pt, pin={[pin distance=1.5ex,font=\scriptsize]below left:{[\alpha]}}] at (0,1,1) {};
					\node[inner sep=0pt, pin={[pin distance=1.5ex,font=\scriptsize]above:{[-1]}}] at (0,1,2) {};
					\foreach \y in {0,1,...,12} {\class({-\y},\y)
					}
					\class[fill={none},draw={none},"\bigtimes"](1,0)
					\foreach \x in {3,7} {\class(\x,0)}
					\class[circlen=3](5,0)
					\class[circlen=3](9,0)
					\d[red,thick]3 (0,1,1)
					\foreach \x in {1,2,...,7}{\d3(\x,\x+1)}
					\foreach \x in {0,1,...,5}{\d3(\x+5,\x)}
					\foreach \x in {0,1,2,3}{\d3(\x+7,\x)}
					\foreach \x in {1,2,...,5}{\d3(\x,\x+3)}
					\d3(-1,8)
					\d2(-1,1)
					\d3(-1,2,2)
				\end{sseqpage}
				\caption{ \centering $E_2$-page and differentials of 
					\\ the DSS \eqref{eqn:PicSS_KO2k} for $\cPic(\KO/2^k)$, $k\ge 6$. \label{fig:PicSS_KO2k_E2}} 
			\end{subfigure}\\
			\begin{subfigure}[b]{0.48\linewidth}
				\begin{sseqpage}[
					Adams grading,
					,x range={-2}{8},y range={0}{8},classes={fill},scale=0.57,classes={fill},tick style={font=\small,
						inner sep=0pt,
						outer sep=0pt,},grid=crossword]
					\foreach \x in {0,8} {\class[circlen=3](\x,0)
						\class(\x+2,0)
						\class[circlen=2](\x+4,0)
						\class(\x+1,1)
						\class(\x+3,1)
						\class(\x+2,2)
						\class(\x+4,2)
						\structline[dashed] (\x+4,0) (\x+4,2)
					}
				\end{sseqpage}
				\caption{\centering $E_\infty$-page of the HFPSS \eqref{eqn:HFPSS_KO2k} for $\KO/2^{k}$,  $k\ge 2$.
					\protect\footnotemark}
			\end{subfigure} \begin{subfigure}[b]{0.48\linewidth}
				\begin{sseqpage}[
					Adams grading,
					,x range={-1}{9},y range={0}{8},classes={fill},scale=0.57,classes={fill},tick style={font=\small,
						inner sep=0pt,
						outer sep=0pt,},grid=crossword]
					\class[fill={none},draw={none},"\bigtimes"](1,0)
					\class(-1,2)
					\class[white](-1,2)
					\class[circlen=3](9,0)
					\class[circlen=2](5,0)
					\class(3,0)
					\class(2,1)
					\class(4,1)
					\class(3,2)
					\class(5,2)
					\class(0,0)
					\class[white](0,1)
					\class(0,1)
					\class(0,3)
					\class(-1,6)
					\structline[dashed] (5,0) (5,2)
					\structline[dashed] (0,0) (0,1,2)
					\structline[dashed] (0,1,2) (0,3)
					\structline[dashed] (-1,2,1) (-1,6)
				\end{sseqpage}
				\caption{\centering $E_\infty$-page of the DSS \eqref{eqn:PicSS_KO2k} for $\cPic(\KO/2^k)$, $k\ge 6$.\label{fig:PicSS_KO2k_Einfty}}
			\end{subfigure}
			\caption[The HFPSS and the Picard DSS for $\KO/2^{k}$]{\centering The HFPSS and the Picard DSS for $\KO/2^{k}$, \\Adams grading, $\bullet=\Z/2$, \protect	\tikz[baseline=-0.6ex]{
					\draw (0,0) circle (0.8ex); 
					\fill (0,0) circle (0.4ex);
				}~$=\Z/2^{k-1}$, \protect	\tikz[baseline=-0.6ex]{
					\draw (0,0) circle (1.2ex); 
					\draw (0,0) circle (0.8ex); 
					\fill (0,0) circle (0.4ex);
				}~$=\Z/2^k$, $\bigtimes=(\Z/2^k)^\times$.	\label{fig:sseq_KO2k}	} 
		\end{figure}		
		\footnotetext{When $k=1$, there are no extension problems on the ($8k+4$)-stems since then $E_\infty^{8k+4,0}=\Z/2^{1-1}=0$. Instead, there are non-trivial extensions on the ($8k+2$)-stems in that case.}This differential in red is supported at $\!^{\Pic} E_2^{1,1}=\H^1(C_2;(\Z/2^{k})^\times)\cong \hom(C_2,(\Z/2^k)^\times)\cong \Z/2\oplus \Z/2$.  Identifying this group cohomology with $2$-torsion elements in $(\Z/2^k)^\times$, then it is generated by $-1$ and $\alpha=1+2^{k-1}\in \ker [(\Z/2^k)^\times\to (\Z/2^{k-1})^\times]$.  Comparing with the DSS for $\Pic(\KO)$ in \cite{MS_Picard}, we know that $\{\pm 1\}\subseteq E_2^{1,1}$ and the $E_2^{3,3}$-terms are permanent cycles on the zero-stem, since they represent suspensions of $\KO/2^{k}$. In particular, the non-zero permanent cycle in $E_\infty^{3,3}$ is represented by $\Sigma^4\KO/2^k$.  For the sake of this theorem, it is enough to show $\alpha$ is \emph{not} a permanent cycle. 
		
		Suppose $\alpha$ is a permanent cycle, then it is represented by some $X\in \Pic(\KO/2^k)$.  Note $\KO/2^{k-1}$ has an $\Ebb_2$-ring structure when $k\ge 6$ and hence $\Pic(\KO/2^{k-1})$ can also be computed from a similar descent spectral sequence \eqref{eqn:PicSS_KO2k}. As $\alpha\in \ker [\H^1(C_2;(\Z/2^{k})^\times)\to \H^1(C_2;(\Z/2^{k-1})^\times)]$,  the base change $\KO/2^{k-1}\otimes_{\KO/2^k} X$ has descent filtration at least $3$ and is hence equivalent to either $\KO/2^{k-1}$ or $\Sigma^4\KO/2^{k-1}$.  In particular,  $\KO/2^{k-3}\otimes_{\KO/2^k} X$ is  either $\KO/2^{k-3}$ or $\Sigma^4\KO/2^{k-3}$.\footnote{Note that $\KO/2^{k-3}$ might not have an $\Ebb_2$-ring structure under the assumption $k\ge 6$. So we need to first consider base change to $\KO/2^{k-1}$. However, as we will see below, we have to consider the base change  $\KO/2^k\to \KO/2^{k-3}$ in order to apply  \Cref{cor:pic_inj}.} This puts us in the situation of \Cref{cor:pic_inj}. Notice that from \cite{Burkland_mult_Moore} we have
		\begin{itemize}
			\item $\KO/8= \KO\otimes \mathbb{S}/8$ admits an $\Ebb_1$-algebra structure.
			\item $\KO/2^k$ has compatible $\Ebb_2$-ring structures when $k\ge 5$.
		\end{itemize} 
		Consider the map of ring spectra  $\KO/2^{k}\to \KO/2^{k-3}$ when $k\ge 6$. It sits in a pullback square of rings:
		\begin{equation*}
			\begin{tikzcd}
				\KO/2^k\rar\dar\ar[dr,phantom,"\lrcorner", very near start]& \KO/2^3\dar\\
				\KO/2^{k-3}\rar& \KO/2^{k-3}\otimes_{\KO}\KO/2^3,
			\end{tikzcd}
		\end{equation*}
		where $\KO/2^{k-3}\otimes_{\KO}\KO/2^3\simeq \KO/8\oplus\Sigma \KO/8$ as a $\KO$-module spectrum.
		Consider the map of Milnor contexts $(\KO/2^{k-3},\KO/2^{k-3}\otimes_{\KO}\KO/2^3,\KO/2^{3}) \to (\KO/2^{k-3},\KO/2^3,\KO/2^{3})$. Applying \Cref{prop:pullbackupperbound}, with $N=R,N'=S$, we learn that the relative module space $\Perf(\KO/2^k\mid \KO/2^{k-3})$ is a collection of components inside the fiber of the map $$\Omega^{\infty}(\KO/2^{k-3}\otimes_{\KO}\KO/2^3) \to \Omega^{\infty}(\KO/2^3).$$ However, this fiber is $\Omega^{\infty}(\Sigma \KO/2^3)$, which is connected.
		
		Thus if $Y\in \Pic(\KO/2^{k})$ satisfies  $\KO/2^{k-3}\otimes_{\KO/2^{k}}Y\simeq\Sigma^{N}\KO/2^{k-3}$, then $Y\simeq \Sigma^N\KO/2^{k}$.  This implies that the hypothetical spectrum $X\in \Pic(\KO/2^k)$ represented by $\alpha$ is equivalent to $\KO/2^k$ or $\Sigma^4\KO/2^{k}$, since it is so after base change to $\KO/2^{k-3}$. However, this contradicts the assumption that $X$ has descent filtration $2$. Therefore, $\alpha$ \emph{cannot} be  a permanent cycle. 
		
		At this point, we can already conclude $\Pic(\KO/2^k)=\Z/8$, since only remaining three terms in the $t-s=0$ stem in the descent spectral sequence \eqref{eqn:PicSS_KO2k} in \Cref{fig:PicSS_KO2k_E2} are coming from that for $\Pic(\KO)=\Z/8$. 
	\end{proof}
	\EnableQuotes
	We include a proof of the differential supported by $\alpha\in \!^{\Pic} E_2^{1,1}$ for completeness. 
	\begin{prop}
		The class $[\alpha]\in \!^{\Pic} E_2^{1,1}$ supports a non-zero $d_3$-differential when $k\geq 6$. 
	\end{prop}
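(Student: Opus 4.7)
The plan is to combine the non-permanence of $[\alpha]$ established in the proof of \Cref{thm:PicKO2k} with parity vanishing on the $E_2$-page of \eqref{eqn:PicSS_KO2k} to single out $d_3$ as the differential killing $[\alpha]$.

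First, I would observe that $d_2([\alpha])=0$ automatically, since the potential target $\!^{\Pic}E_2^{3,2}=\H^3(C_2;\pi_1(\KU/2^k))$ vanishes because $\pi_1(\KU/2^k)=0$. More generally, for every even $r\ge 2$, the target $\!^{\Pic}E_r^{r+1,r}$ is a subquotient of $\H^{r+1}(C_2;\pi_{r-1}(\KU/2^k))$, which is zero when $r-1$ is odd. Hence any non-zero differential off $[\alpha]$ must be in odd degree; in particular $d_3([\alpha])$ is unambiguously defined.

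Second, by the argument already given in the proof of \Cref{thm:PicKO2k}, any $X\in \Pic(\KO/2^k)$ representing $[\alpha]$ at descent filtration $1$ would, via the pullback presentation $\KO/2^k\simeq \KO/2^{k-3}\times_{\KO/2^{k-3}\otimes_{\KO}\KO/2^3}\KO/2^3$ and \Cref{prop:pullbackupperbound}, have to agree with $\KO/2^k$ or $\Sigma^4\KO/2^k$, which have descent filtrations $0$ and $3$ respectively. Hence $[\alpha]$ is not a permanent cycle, and some odd $d_r([\alpha])\ne 0$ with $r\ge 3$.

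To pin down $r=3$, I would invoke the comparison of the Picard DSS with the HFPSS for $\KU/2^k$ via the truncated logarithm, in the spirit of \cite[Comparison Tool 5.2.4]{MS_Picard}. Under this comparison, $\alpha=1+2^{k-1}$ has logarithm $2^{k-1}\pmod{2^k}$, which represents the generator of $\H^1(C_2;\pi_0(\KU/2^k))\cong \Z/2$ in the HFPSS. Its $d_3$-image can be computed by multiplicativity from the classical $d_3(u^2)=\eta^3$ differential and lands non-trivially in $\H^4(C_2;\pi_2(\KU/2^k))\cong \Z/2$; this pulls back to $d_3([\alpha])\ne 0$ in $\!^{\Pic}E_3^{4,3}$. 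The hardest step is precisely this last identification: justifying the logarithm comparison at $p=2$, where the usual $\log$--$\exp$ equivalence is only defined on $1+2\pi_0(\KU/2^k)$. An alternative tactic is to identify $d_3([\alpha])$ directly as a Toda bracket $\langle \eta, 2, 2^{k-1}\rangle$ using the filtered $\Ebb_\infty$-refinement of $\KO/2^k$ supplied by the pullback presentation of \Cref{dfn:tateandkoquot}.
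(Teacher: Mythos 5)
Your first two steps are sound and agree with the paper: the parity vanishing of $\pi_{\mathrm{odd}}(\KU/2^k)$ kills all even-length differentials out of $\!^{\Pic}E_2^{1,1}$, and the non-permanence of $[\alpha]$ is exactly the content already extracted from the proof of \Cref{thm:PicKO2k}. The gap is in the decisive third step, where you claim $d_3([\alpha])\neq 0$ directly via a ``truncated logarithm'' comparison with the HFPSS. No such comparison is available for a class in filtration $t=1$ hit by a $d_3$: \cite[Comparison Tool 5.2.4]{MS_Picard} identifies $d_r^{\Pic}$ with $d_r^{\mathrm{HFP}}$ only for $2\le r\le t-1$ (an empty range when $t=1$), and the universal formula for the first differential beyond the stable range only covers $r=t+1=2$. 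Moreover $\pi_1\cPic(\KU/2^k)=(\Z/2^k)^\times$ while the HFPSS sees $\pi_0(\KU/2^k)=\Z/2^k$; at $p=2$ the discrete logarithm between these is only defined on $1+2\Z/2^k$ and is not induced by any map of spectra with $C_2$-action that would intertwine the $d_3$'s. You flag this yourself as the hardest step, and the Toda-bracket alternative is likewise only a sketch; so the proposal leaves the one genuinely nontrivial point unproven.

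The paper closes this gap by an indirect naturality argument rather than a direct computation. Since $[\alpha]$ is not a permanent cycle and only $d_3$ or $d_5$ can be nonzero, assume $d_3([\alpha])=0$; then $d_5([\alpha])$ equals the nonzero class $\beta\in\!^{\Pic}E_2^{6,5}\cong \H^6(C_2;\pi_4(\KU/2^k))$. Now apply the map $q_*$ of descent spectral sequences induced by $\KO/2^k\to\KO/2^{k-1}$ (legitimate since $k\ge 6$): by construction $q_*(\alpha)=0$ in $\H^1(C_2;(\Z/2^{k-1})^\times)$, whereas a group-cohomology computation shows $q_*(\beta)\neq 0$. Naturality then forces $q_*(\beta)$ to be killed by a shorter differential in the DSS for $\cPic(\KO/2^{k-1})$, but the only candidate source, $\!^{\Pic}E_2^{3,3}(\KO/2^{k-1})$, is a permanent cycle (it represents $\Sigma^4\KO/2^{k-1}$). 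This contradiction shows $d_3([\alpha])\neq 0$. If you want to salvage your route, you would need to either prove a $t=1$ analogue of the comparison tool or substitute an argument of this naturality type.
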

	\begin{proof}
		We assume that $k\geq 6$ so that $\cPic$ is an $\EE_2$-space and  the target of the differential in the spectral sequence makes sense as a group.
		From \Cref{fig:PicSS_KO2k_E2}, the possible non-zero differential supported by $\alpha$ is either  $d_3$ or $d_5$.  Suppose $d_3(\alpha)=0$, then $d_5(\alpha)$ is the non-zero element  $\beta\in \!^{\Pic} E_2^{6,5}=\H^6(C_2;\pi_5\cPic(\KU/2^k))\cong \H^6(C_2;\pi_4(\KU/2^k))$, where $C_2$ acts trivially on $\pi_5\cPic(\KU/2^k)\cong \pi_4(\KU/2^k)=\Z/2^k$.  
		
		Let $q_*$ be the induced map of $\cPic(\KO/2^k)\to \cPic(\KO/2^{k-1})$ on the descent spectral sequences \eqref{eqn:PicSS_KO2k}. Then group cohomology computations implies that $q_*(\beta)\ne 0\in \!^{\Pic} E_2^{6,5}(\KO/2^{k-1})=\H^6(C_2;\pi_5\cPic(\KU/2^{k-1}))$, whereas $q_*(\alpha)=0$ by the definition of $\alpha$.  From the naturality of differentials in the descent spectral sequence, this can only happen if $q_*(\beta)$ is hit by a shorter differential in the descent spectral sequence for $\cPic(\KO/2^{k-1})$. The only possible candidate to support this shorter differential, $^{\Pic} E_2^{3,3}(\KO/2^{k-1})$, is  known to be a permanent cycle however. Consequently, it is not possible to have $d_3(\alpha)=0$ and $d_5(\alpha)=\beta$. We have therefore proved the element $\alpha\in\!^{\Pic} E_2^{1,1}(\KO/2^{k})$ supports a $d_3$-differential in red in \Cref{fig:PicSS_KO2k_E2}.
	\end{proof}
	\begin{rem}
		Just like the class $\alpha$, the target of the $d_3$-differential $d_3(\alpha)\in\!^{\Pic} E_2^{4,3}=\H^4(C_2;\pi_2(\KU)/2^k)$ is killed upon base change to $\H^4(C_2;\pi_2(\KU)/2^{k-1})$. 
	\end{rem}
	\begin{rem}[$-1$-stem on the Picard DSS and the relative Brauer group]\label{rem:PicSS_KO2k}
		In \Cref{fig:PicSS_KO2k_E2}, the $d_2$-differential supported at $E_2^{1,0}$ and $d_3$-differential supported at $E_2^{2,1}$ can be deduced from the corresponding differentials on the DSS for $\cPic(\KO)$. This follows from the computation of the first two $C_2$-equivariant $k$-invariants of the Picard space $\cPic(\KU)$ in  \cite[Proposition 7.14]{Gepner-Lawson_Brauer}. In \Cref{fig:PicSS_KO2k_Einfty}, the extension problem on the $t-s=-1$ stem can be deduced from the corresponding extension problem on the DSS for $\cPic_{\K(1)}(\KO^\wedge_2)$, which was proved in \cite[Theorem 5.3]{Mor_relBr} as a computation of the relative Brauer group $\mathrm{Br}(\KO^\wedge_2\mid \KU^\wedge_2)$. See \cite[Figure 2]{Gepner-Lawson_Brauer}, \cite[Figure 1]{AMS_Brauer}, and  \cite[Figure 3]{Mor_relBr} for related computations of differentials and extension problems on the $-1$-stems on the DSS of $\cPic(\KO)$ and $\cPic_{\K(1)}\left(\KO^\wedge_2\right)$.
		
		In particular, our computation implies that $\mathrm{Br}(\KO/2^k\mid\KU/2^k)=\Z/4$ when $k\ge 6$.  Note that $k=6$ is the smallest power that $\Sbb/2^k$ is known to admit an $\Ebb_3$-ring structure by \cite{Burkland_mult_Moore}, which is needed to define the (relative) Brauer group. 
	\end{rem}
	\begin{thm}\label{thm:Pic_S02k}
		When $p=2$ and $k\ge 6$, we have
		\begin{equation*}
			\Pic(\Sbb_{\K(1)}/2^k)\cong \Z/2^{k-1}\oplus \Z/4\oplus \Z/2.
		\end{equation*}
	\end{thm}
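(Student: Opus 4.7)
The approach is to apply descent along the $\Z_2$-Galois extension $\KO_2 \to \Sbb_{\K(1),2}$, leveraging the computation $\Pic(\KO/2^k)=\Z/8$ of \Cref{thm:PicKO2k}. At $p=2$ one has $\Sbb_{\K(1),2}\simto \KO_2^{h\Z_2}$, where $\Z_2 \cong 1+4\Z_2 = \Z_2^\times/\{\pm 1\}$ acts on $\KO_2$ via $\psi^5$. Since $\Sbb/2^k$ admits an $\Ebb_2$-ring structure for $k\geq 6$ by \cite{Burkland_mult_Moore}, this refines to $\Sbb_{\K(1)}/2^k \simto (\KO/2^k)^{h\Z_2}$, and the Picard descent spectral sequence (as in \cite{MS_Picard,LZ_profin_Picard}) for this Galois extension takes the form
\begin{equation*}
E_2^{s,t} = \H^s_c(\Z_2;\pi_t\cPic(\KO/2^k)) \Longrightarrow \pi_{t-s}\cPic(\Sbb_{\K(1)}/2^k).
\end{equation*}
Since $\cd_2(\Z_2)=1$, only the columns $s\in\{0,1\}$ are nonzero.

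The action of $\Z_2$ on $\Pic(\KO/2^k)=\Z/8$ is trivial (as $\psi^5$ preserves the suspension class), and the action on $\pi_0(\KO/2^k)^\times = (\Z/2^k)^\times$ is trivial as well, so
\begin{equation*}
E_2^{0,0}=\Z/8, \qquad E_2^{1,1}\cong \H^1_c(\Z_2;(\Z/2^k)^\times) \cong (\Z/2^k)^\times \cong \Z/2\oplus \Z/2^{k-2}.
\end{equation*}
Differentials cannot touch the zero stem: outgoing $d_r$ lands in the $(-1)$-stem, while any incoming $d_r$ would have to originate at $E_r^{s-r,s-r+1}$ with $s-r\geq 0$ and $r\geq 2$, which is impossible when $s\in\{0,1\}$. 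Both groups therefore survive to $E_\infty$, yielding a short exact sequence
\begin{equation*}
0 \to \Z/2\oplus\Z/2^{k-2} \to \Pic(\Sbb_{\K(1)}/2^k) \to \Z/8 \to 0.
\end{equation*}

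The main obstacle is resolving this extension. The plan is to pin down both the order of $\Sigma\in\Pic(\Sbb_{\K(1)}/2^k)$ and the image of $8\Sigma$ in $E_\infty^{1,1}$. For the order, I run the parallel homotopy fixed-point spectral sequence $\H^*_c(\Z_2;\pi_*\KO/2^k)\Longrightarrow \pi_*\Sbb_{\K(1)}/2^k$: the Bott element $\beta\in\pi_8(\KO/2^k)=\Z/2^k$ is scaled by $\psi^5$ as $\beta\mapsto 5^4\beta = 625\beta$, and $5^4$ has order $2^{k-4}$ in $(\Z/2^k)^\times$, so the smallest $\psi^5$-invariant unit in $\pi_*\KO/2^k$ is $\beta^{2^{k-4}}\in \pi_{2^{k-1}}$. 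Hence $\Sbb_{\K(1)}/2^k$ has $\K(1)$-local periodicity $2^{k-1}$, so $\Sigma$ has order $2^{k-1}$ in $\Pic$. To identify $8\Sigma$, I compare $\Sigma^8\Sbb_{\K(1)}/2^k \simto (\Sigma^8\KO/2^k)^{h\Z_2}$ with $(\KO/2^k)^{h\Z_2}$ via the $\KO$-equivalence $\beta\colon \Sigma^8\KO/2^k \simto \KO/2^k$: this transports the $\Z_2$-action to one twisted by the character $5\mapsto 5^4$, so the class of $8\Sigma$ in $E_\infty^{1,1}\cong (\Z/2^k)^\times$ identifies with $(0,4)$ in the decomposition $(\Z/2^k)^\times = \{\pm 1\}\times\langle 5\rangle$.

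Given these inputs, $\Pic(\Sbb_{\K(1)}/2^k)$ is generated by $\langle\Sigma\rangle\cong \Z/2^{k-1}$ together with $E_\infty^{1,1}\cong \Z/2\oplus\Z/2^{k-2}$, amalgamated along the common subgroup $\langle 8\Sigma\rangle = \langle(0,4)\rangle \cong \Z/2^{k-4}$. An elementary-divisor computation over $\Z$ then yields $\Pic(\Sbb_{\K(1)}/2^k)\cong \Z/2^{k-1}\oplus\Z/4\oplus\Z/2$, as claimed. The hardest step is the precise identification of $8\Sigma$ with $(0,4)$, which rests on tracking the failure of $\Z_2$-equivariance of the Bott class $\beta$ through the Picard descent spectral sequence via naturality of the latter with respect to the suspension functor.
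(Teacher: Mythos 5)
Your proposal is correct and sets up the same two--column descent spectral sequence for the $(1+4\Z_2)$-Galois extension $\Sbb_{\K(1)}/2^k\to\KO/2^k$ as the paper, arriving at the same short exact sequence $0\to\H^1_c(1+4\Z_2;(\Z/2^k)^\times)\to\Pic(\Sbb_{\K(1)}/2^k)\to\Z/8\to0$; the difference lies entirely in how the extension is resolved. The paper maps the analogous short exact sequence for $\Pic(\Sp_{\K(1)})\cong\Z_2\oplus\Z/4\oplus\Z/2$ (taken as known from Hopkins--Mahowald--Sadofsky) onto yours and applies the snake lemma, exhibiting the answer as the quotient $(\Z_2\oplus\Z/4\oplus\Z/2)/2^{k-1}\Z_2$, using that the generator of $1+4\Z_2$ maps to $(2,1,0)$. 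You instead resolve the extension intrinsically: since extensions of $\Z/8$ by $A=\Z/2\oplus\Z/2^{k-2}$ are classified by $A/8A$ via the class of $8\Sigma$, it suffices to identify $8\Sigma$ in $E_\infty^{1,1}\cong\hom_c(1+4\Z_2;(\Z/2^k)^\times)$ as the cocycle $g\mapsto g^4$ coming from the failure of equivariance of the Bott trivialization $\beta\colon\Sigma^8\KO/2^k\simeq\KO/2^k$ under $\psi^g$, i.e.\ the element $(0,4)$ in $\{\pm1\}\times\langle 5\rangle$; the elementary-divisor computation then gives $\Z/2^{k-1}\oplus\Z/4\oplus\Z/2$ (and your separate periodicity argument for the order of $\Sigma$ is in fact a consequence of this identification, since $2^{k-1}\Sigma=2^{k-4}\cdot(0,4)=0$). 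Your route has the advantage of not requiring the full $p=2$ computation of $\Pic(\Sp_{\K(1)})$ as input --- only the standard action of Adams operations on $\pi_8$ --- while the paper's route additionally shows that $\Pic(\Sp_{\K(1)})\to\Pic(\Sbb_{\K(1)}/2^k)$ is surjective, which is relevant to their \Cref{quest:Pic_M}. The one step you should write out carefully is the identification of $8\Sigma$ with the cocycle $g\mapsto g^4$ (your ``hardest step''); this is standard Galois-descent bookkeeping and is unambiguous here precisely because the kernel of $\Pic(\Sbb_{\K(1)}/2^k)\to\Pic(\KO/2^k)$ is exactly $E_\infty^{1,1}$, with no lower filtration to interfere.
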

	\begin{proof}
		The profinite descent spectral sequence in \Cref{thm:picss} for the Picard groups of the $(1+4\Z_2)$-Galois extension $\mathbb{S}_{\K(1)}/2^k\to \KO/2^k$ has $E_2$-page:
		\begin{equation*}
			E_2^{s,t}=\H_c^s(1+4\Z_2;\pi_t\cPic_{\K(1)}(\KO/2^k))\Longrightarrow \pi_{t-s}\cPic(\Sbb_{\K(1)}/2^k), \quad t-s\ge 0.
		\end{equation*}
		As $\cd_2(1+4\Z_2)=1$, the $E_2$-page is concentrated in the $s=0,1$-lines and hence the spectral sequence collapses on this page. This yields an extension problem on $0$-stem of the $E_\infty$-page, which is compatible with the one on the $E_\infty$-page of the descent spectral sequence for $\Pic(\Sp_{\K(1)})$ in \cite{HMS_picard}.
		\begin{equation*}
			\begin{tikzcd}[row sep=small]
				0\rar &[-12 pt]\H_c^1(1+4\Z_2;\pi_1\cPic_{\K(1)}(\KO^\wedge_2)) \rar\dar[symbol=\cong ]& \Pic(\Sp_{\K(1)}) \rar \dar[symbol=\cong ]&\H_c^0(1+4\Z_2;\Pic_{\K(1)}(\KO^\wedge_2))\dar[symbol=\cong ]\rar&[-12 pt] 0\\
				0\rar&	\Z_2^\times\dar[->>,"f_1"]\rar["i"]&  \Z_2\oplus \Z/4\oplus \Z/2\rar\dar["f"]&\Z/8\dar[equal]\rar&0\\ [10pt]
				0\rar&	(\Z/2^k)^\times\rar\dar[symbol=\cong ] &{\color{red}??}\rar \dar[symbol=\cong ]&\Z/8\rar \dar[symbol=\cong ]&0\\ 
				0\rar &\H_c^1(1+4\Z_2;\pi_1\cPic_{\K(1)}(\KO/2^k)) \rar & \Pic(\Sbb_{\K(1)}/2^k) \rar &\H_c^0(1+4\Z_2;\Pic_{\K(1)}(\KO/2^k))\rar& 0
			\end{tikzcd}
		\end{equation*}
		By the snake lemma, the middle vertical map $f$ is surjective with kernel equal to  $i(\ker f_1)=i(1+2^k\Z_2)$.  The map $i$ sends a generator $g\in 1+4\Z_2\le \Z_2^\times$ to $(2,1,0)\in \Z_2\oplus \Z/4\oplus \Z/2\cong \Pic(\Sp_{\K(1)})$. Therefore $\ker f=i(\ker f_1)=2^{k-1}\Z_2$ when $k\ge 4$. From this, we conclude
		\begin{equation*}
			\Pic(\Sbb_{\K(1)}/2^k)\cong (\Z_2\oplus \Z/4\oplus \Z/2)/(2^{k-1}\Z_2)\cong \Z/2^{k-1}\oplus\Z/4\oplus\Z/2. \qedhere 
		\end{equation*}
	\end{proof}
	\begin{rem}
		When $k\ge 4$, Davis--Mahowald showed in  \cite[Proposition 2.3]{Davis-Mahowald1990}  that  the Moore spectrum $\mathbb{S}/2^k$ admits a self map $\Sigma^{2^{k-1}}\mathbb{S}/2^k\to \mathbb{S}/2^k$, which induces the multiplication-by-$u^{2^{k-2}}$ map on its $\KU$-cohomology groups. This is reflected in the $\Z/2^{k-1}$-summand in $\Pic(\Sbb_{\K(1)}/2^k)$.  
	\end{rem}
	\begin{rem}\label{rem:PicKO32}
		By \cite{Burkland_mult_Moore}, $\KO/32=\KO\otimes \Sbb/32$ and $\Sbb_{\K(1)}/32$ admit an $\Ebb_2$-ring structure, so their Picard groups are defined. In \Cref{thm:PicKO2k}, we are not able to determine $\Pic(\KO/32)$ -- it is either $\Z/8$ or $\Z/8\oplus\Z/2$ depending on whether the $d_3$-differential in red in \Cref{fig:PicSS_KO2k_E2} exists or not. Following \Cref{thm:Pic_S02k}, we have $\Pic(\Sbb_{\K(1)}/32)$ is either $\Z/16\oplus\Z/4\oplus\Z/2$ or its extension by $\Z/2$. 
	\end{rem}
	\emergencystretch=1em
	\printbibliography
\end{document}